\documentclass[a4paper]{amsart}
\usepackage{mathpazo}
\usepackage[utf8]{inputenc}
\usepackage{amsmath, amssymb, amsthm, mathrsfs, mathtools, graphicx}
\usepackage{varioref}
\usepackage[backref=page]{hyperref}
\usepackage{enumitem, xspace, ifthen, comment}
\usepackage[all]{xy}
\usepackage[nameinlink, capitalize]{cleveref}
\usepackage[dvipsnames]{xcolor}
\usepackage{tikz}
\usepackage{tikz-cd}
\definecolor{violet}{rgb}{0.0,0.2,0.7}
\definecolor{rouge2}{rgb}{0.8,0.0,0.2}

%%%%%%%% GENERAL FORMATTING %%%%%%%%

%\renewcommand{\thepart}{\rm \Roman{part}}
%\renewcommand{\partname}{\rm {PART}}
%\renewcommand\thesubsection{\thesection.\Alph{subsection}}

\tikzset{
    labl/.style={anchor=south, rotate=90, inner sep=.5mm}
}

\setcounter{tocdepth}{1}

\hyphenation{Gro-then-dieck}
\hyphenation{mani-fold}

\setlist[enumerate]{
  label=(\thethm.\arabic*),
  %before={\setcounter{enumi}{\value{equation}}},
  %after={\setcounter{equation}{\value{enumi}}},
  itemsep=1ex
}

\setlist[itemize]{
  leftmargin=*,
  topsep=1ex,
  itemsep=1ex,
  label=$\circ$
}

%\mypagesize

%%%%%%%% THEOREM ENVIRONMENTS %%%%%%%%
\numberwithin{equation}{section}

\newtheorem*{thm-plain}{Theorem}
\newtheorem{thm}{Theorem}[section]
\newtheorem{lem}[thm]{Lemma}
\newtheorem{prp}[thm]{Proposition}
\newtheorem{cor}[thm]{Corollary}

%\numberwithin{equation}{thm}
\newtheorem{bigthm}{Theorem}

\newtheorem{aawlog}{Assumption}

\theoremstyle{definition}
\newtheorem{dfn}[thm]{Definition}

\newtheorem*{dfn-plain}{Definition}

\theoremstyle{remark}
\newtheorem{claim}[thm]{Claim}

\newtheorem{ntn}[thm]{Notation}
\newtheorem{setup}[thm]{Setup}
\newtheorem{rem}[thm]{Remark}

\newtheorem*{rem-plain}{Remark}

%\setlist[enumerate]{
%  label=(\thesection.\thesubsection.\arabic*),
%  before={\setcounter{enumi}{\value{equation}}},
%  after={\setcounter{equation}{\value{enumi}}},
%  itemsep=1ex
%}

%%%%%%%% MATHEMATICS DEFINITIONS %%%%%%%%

% General symbols

\newcommand{\inj}{\hookrightarrow}

\newcommand{\map}{\dashrightarrow}

\newcommand{\wb}{\overline}
\newcommand{\wh}{\widehat}

\newcommand{\factor}[2]{\left. \raise 2pt\hbox{$#1$} \right/\hskip -2pt \raise -2pt\hbox{$#2$}}

\newdir{ ir}{{}*!/-5pt/@^{(}} % Injective arrow, for \xymatrix{}.
\newdir{ il}{{}*!/-5pt/@_{(}} % Usage: \ar@{ ir->}, \ar@{ il->} }}

% Braces

\def\rd#1.{\lfloor{#1}\rfloor}
\def\rp#1.{\lceil{#1}\rceil}
\def\tw#1.{\langle{#1}\rangle}

% Algebraic geometry

\def\Hnought#1.#2.{\mathit{\Gamma} \!\left( #1, #2 \right)}
\def\HH#1.#2.#3.{\mathrm{H}^{#1} \!\left( #2, #3 \right)}
\def\hh#1.#2.#3.{h^{#1} \!\left( #2, #3 \right)}
\def\RR#1.#2.#3.{R^{#1} #2_* #3}
\def\HHc#1.#2.#3.{\mathrm{H}_{\mathrm{c}}^{#1} \!\left( #2, #3 \right)}
\def\Hh#1.#2.#3.{\mathrm{H}_{#1} \!\left( #2, #3 \right)}
\def\Hom#1.#2.{\mathrm{Hom} \!\left( #1, #2 \right)}
\def\End#1.{\mathrm{End} \!\left( #1 \right)}
\def\sHom#1.#2.{\mathscr{H}\!om \!\left( #1, #2 \right)}
\def\Ext#1.#2.#3.{\mathrm{Ext}^{#1} \!\left( #2, #3 \right)}
\def\sExt#1.#2.#3.{\mathscr{E}\!xt^{#1} \!\left( #2, #3 \right)}

\DeclareMathOperator{\Ric}{Ric}
\DeclareMathOperator{\tr}{tr}

%%%%%%%% LETTERS %%%%%%%%

%\renewcommand{\i}{\mathrm{i}}
\newcommand{\ep}{\varepsilon}
\renewcommand{\theta}{\vartheta}
\renewcommand{\phi}{\varphi}
\newcommand{\vp}{\varphi}
\newcommand{\vpte}{\varphi_{t,\ep}}
\newcommand{\om}{\omega}
\newcommand{\omke}{\omega_{\rm KE}}
\newcommand{\omkeg}{\omega_{\textrm{KE},\gamma}}

\newcommand{\omte}{\omega_{t, \ep}}
\newcommand{\X}{{\wh X} }

% Blackboard letters

\newcommand{\Q}{\ensuremath{\mathbb Q}}
\newcommand{\R}{\ensuremath{\mathbb R}}

% Fraktur alphabet

% Calligraphic alphabets

 \newcommand{\sE}{\mathscr E} \newcommand{\sF}{\mathscr F}
\newcommand{\sG}{\mathscr G} \newcommand{\sH}{\mathscr H} 
  
  \newcommand{\sO}{\mathscr O}

\newcommand{\sV}{\mathscr V}

  \newcommand{\cC}{\mathcal C}

%%%%%%%% WORKING VERSION ENVIRONMENTS %%%%%%%%

% Todo box

% Side remarks

% Preprint/Publication alternatives

\definecolor{forrest}{RGB}{81,133,49}
\definecolor{mydarkblue}{RGB}{10,92,153}

% uncomment this for the working version

% uncomment this for the preprint version
% \newcommand{\PreprintAndPublication}[2]{#1}

% uncomment this for the publication version
% \newcommand{\PreprintAndPublication}[2]{#2}

%%%%%%%% BEGINNING OF DOCUMENT %%%%%%%%

\title{A decomposition theorem for $\mathbb Q$-Fano Kähler-Einstein varieties}

% \dedicatory{}

%\date{\today}
%\thanks{}
%\subjclass[2010]{32J27, 14E30, 32G05}

\hypersetup{
  pdfauthor={Henri Guenancia},
  pdftitle={Stability of Q-Fano},
  pdfkeywords={},
  pdfstartview={Fit},
  pdfpagelayout={OneColumn},
  pdfpagemode={UseNone},
  linktoc=all,
  breaklinks,
  linkcolor=rouge2,
  citecolor=violet,
  urlcolor=[RGB]{0 96 0},
  colorlinks}

\author{Stéphane Druel}
\address{Univ Lyon, CNRS, Université Claude Bernard Lyon 1, UMR 5208, Institut Camille Jordan, F-69622 Villeurbanne, France}
\email{\href{mailto:stephane.druel@math.cnrs.fr}{stephane.druel@math.cnrs.fr}}
\urladdr{\href{http://druel.perso.math.cnrs.fr/index.html}{druel.perso.math.cnrs.fr}}

\author{Henri Guenancia}
\address{Institut de Math\'ematiques de Toulouse, Universit\'e Paul Sabatier, 31062 Toulouse Cedex 9, France}
\email{\href{mailto:henri.guenancia@math.cnrs.fr}{henri.guenancia@math.cnrs.fr}}
\urladdr{\href{https://hguenancia.perso.math.cnrs.fr/}{hguenancia.perso.math.cnrs.fr}}

\author{Mihai P\u{a}un}
\address{Lehrstuhl f\"ur Mathematik VIII, Universit\"at Bayreuth, 95440 Bayreuth, Germany}
\email{\href{mailto:mihai.paun@uni-bayreuth.de}{mihai.paun@uni-bayreuth.de}}

\begin{document}

\begin{abstract}
Let $X$ be a $\mathbb Q$-Fano variety admitting a Kähler-Einstein metric. We prove that up to a finite quasi-étale cover, $X$ splits isometrically as a product of Kähler-Einstein $\mathbb Q$-Fano varieties whose tangent sheaf is stable with respect to the anticanonical polarization. This relies among other things on a very general splitting theorem for algebraically integrable foliations. We also prove that the canonical extension of $T_X$ by $\sO_X$ is semistable with respect to the anticanonical polarization. 
\end{abstract}

\maketitle

 \begingroup
 \hypersetup{linkcolor=rouge2}
 \tableofcontents
 \endgroup

\section{Introduction}

Let $(X,\om)$ be a Fano Kähler-Einstein manifold, i.e. $X$ is a projective manifold with $-K_X$ ample and admitting a Kähler metric $\om$ solving $\Ric \om=\om$. It follows from the (easy direction of the) Kobayashi-Hitchin correspondence that the tangent bundle of $X$ splits as a direct sum of parallel subbundles
\begin{equation}
\label{TX}
T_X=\bigoplus_{i\in I} F_i
\end{equation}
such that $F_i$ is stable with respect to $-K_X$. Since $X$ is simply connected, de Rham's splitting theorem asserts that one can integrate the foliations arising in decomposition~\eqref{TX} and obtain an isometric splitting
$$(X,\om) \simeq \prod_{i\in I} (X_i,\om_i)$$
into Kähler-Einstein Fano manifolds which is compatible with \eqref{TX}.\\

Over the last few decades, a lot of attention has been drawn to projective varieties with mild singularities, in relation to the progress of the Minimal Model Program (MMP). In that context, the notion of $\Q$-Fano variety (cf. Definition~\ref{QFano}) has emerged and played a central role in birational geometry. 

On the analytic side, singular Kähler-Einstein metrics have been introduced and constructed in various settings (see e.g. \cite{EGZ, BBEGZ, BG} and Definition~\ref{defKE}). They induce genuine Kähler-Einstein metrics on the regular part of the variety but are in general incomplete, preventing the use of most useful results in differential geometry (like the de Rham's splitting theorem mentionned above) to analyze their behavior.  However, these objects are well-suited to study (poly)-stability properties of the tangent sheaf as it was observed by \cite{GSS}, relying on earlier results by \cite{Enoki}. 

In the Ricci-flat case, the holonomy of the singular metrics was computed in \cite{GGK}. Moreover, \cite{Dru16} provided an algebraic integrability result for foliations as well as a splitting result in that setting. Building upon those results, Höring and Peternell \cite{HP} could eventually prove the singular version of the Beauville-Bogomolov decomposition theorem.

In the positive curvature case, some new difficulties arise. In this paper, our main contribution is to single out and overcome those difficulties in order to prove the following structure theorem for $\Q$-Fano varieties that admit a Kähler-Einstein metric.

\begin{bigthm}\label{thm:main_thm}
Let $X$ be a $\mathbb Q$-Fano variety admitting a Kähler-Einstein metric $\om$. Then 
$T_X$ is polystable with respect to $c_1(X)$. Moreover, there exists a quasi-\'etale cover $f\colon Y \to X$ 
such that $(Y,f^*\omega)$ decomposes isometrically as a product $$(Y,f^*\om) \simeq \prod_{i\in I}(Y_i,\om_i),$$
where $Y_i$ is a $\mathbb Q$-Fano variety with stable tangent sheaf with respect to $c_1(Y_i)$ and $\omega_i$ is a Kähler-Einstein metric
on $Y_i$.
\end{bigthm}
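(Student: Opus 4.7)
\emph{Polystability.} The first step would be to establish that $T_X$ is polystable with respect to $c_1(X)=-K_X$. The Kähler--Einstein metric $\omega$ induces a genuine Hermite--Einstein metric on $T_X$ over $\Reg{X}$, and the singular version of Enoki's theorem used in \cite{GSS} then implies that the reflexive sheaf $T_X$ is polystable with respect to $c_1(X)$. We therefore obtain an orthogonal decomposition
\[
T_X=\bigoplus_{i\in I}F_i
\]
into stable, saturated subsheaves that are parallel for the Chern connection of $\omega|_{\Reg{X}}$. Parallelism forces involutivity on the regular locus, so each $F_i$ defines a holomorphic foliation on $X$.

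\emph{Algebraic integrability.} The decisive step, and the main departure from the Ricci-flat case of \cite{GGK, HP}, is to show that each $F_i$ is algebraically integrable. The natural route is to exploit positivity: since $-K_X$ is ample and $T_X$ is polystable, each stable summand satisfies $\mu_{-K_X}(F_i)=\mu_{-K_X}(T_X)>0$. Combined with the rational connectedness of $\mathbb Q$-Fano varieties and algebraic-integrability criteria for positive-slope foliations (in the spirit of Bogomolov--McQuillan and Campana--P\u{a}un), this should yield algebraic, in fact rationally connected, leaves for each $F_i$. I expect this to be the main obstacle: in positive curvature one cannot invoke restricted holonomy as in \cite{Dru16} to bridge analytic and algebraic integrability, and one must carefully control the behaviour of the $F_i$ along $\Sing{X}$ (e.g.\ extension of the $F_i$ across the singularities as saturated foliated subsheaves with controlled singularities).

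\emph{Splitting and identification of the factors.} Once algebraic integrability of the $F_i$ is in hand, the general splitting theorem for algebraically integrable foliations announced in the abstract applies to the decomposition $T_X=\bigoplus F_i$ and produces a quasi-étale cover $f\colon Y\to X$ together with a product decomposition $Y\simeq\prod_{i\in I}Y_i$ in which each $f^{*}F_i$ corresponds to the tangent sheaf of the $i$-th factor. Because the decomposition of $T_X$ is orthogonal and parallel for $\omega$, the pulled-back metric $f^{*}\omega$ decomposes isometrically as the product of its restrictions $\omega_i$ to the $Y_i$. Finally, each $Y_i$ inherits the $\mathbb Q$-Fano property through the quasi-étale cover $f$ and the product structure (both anti-ampleness of the anticanonical class and the klt condition pass through quasi-étale covers and descend to factors of a product), the Einstein equation $\Ric\omega_i=\omega_i$ holds on each $Y_i$ by restriction of the product equation, and stability of $T_{Y_i}$ with respect to $c_1(Y_i)$ is inherited from the stability of $F_i$ on $X$ via the compatibility of slopes under quasi-étale covers and products.
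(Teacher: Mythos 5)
Your overall architecture coincides with the paper's: polystability of $T_X$ via the (approximate) K\"ahler--Einstein metric, algebraic integrability of the parallel summands via positivity of the slope on general complete intersection curves and Bogomolov--McQuillan, then the splitting theorem for algebraically integrable weakly regular foliations. However, there are two genuine gaps in your final step. The more serious one is your claim that ``stability of $T_{Y_i}$ is inherited from the stability of $F_i$ on $X$ via the compatibility of slopes under quasi-\'etale covers.'' This is false as stated: under the cover $f\colon Y\to X$ one has $f^{[*]}F_i\simeq \mathrm{pr}_i^*T_{Y_i}$, and pullback under a finite quasi-\'etale cover preserves semistability and polystability but \emph{not} stability --- $T_{Y_i}$ may well decompose further. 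The paper handles this by iteration: if some $T_{Y_i}$ fails to be stable, the polystable decomposition of $T_Y$ induced by $f^*\om$ (Theorem~\ref{polystable} applied to $Y$) strictly refines the product decomposition $\bigoplus_i \mathrm{pr}_i^*T_{Y_i}$, one applies Theorem~\ref{thm:splitting} again to a further cover, and the process terminates after finitely many steps for rank reasons. Without this loop your argument does not deliver the stability of the factors' tangent sheaves.

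The second gap concerns the assertion that each $\om_i$ is a K\"ahler--Einstein metric on $Y_i$. On a singular $\Q$-Fano variety the equation $\Ric\om_i=\om_i$ on $Y_i^{\rm reg}$ is not sufficient: one must check that $\om_i$ extends as a positive current in $c_1(Y_i)$ with bounded potentials, which by the criterion of \cite[Proposition~3.8]{BBEGZ} amounts to the non-collapsing condition $\int_{Y_i^{\rm reg}}\om_i^{n_i}=c_1(Y_i)^{n_i}$. The paper verifies this (Claim~\ref{produit}) by combining the a priori inequality $\int_{Y_i^{\rm reg}}\om_i^{n_i}\le c_1(Y_i)^{n_i}$ with the multiplicativity of total masses over the product, forcing equality in each factor. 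A minor further point: in the first step, the polystability of $T_X$ in the Fano case is not a direct application of the singular Hermite--Einstein/Enoki argument of \cite{GSS}; an additional error term (the term ${\rm\bf(I)}$ in \S~\ref{stab}, coming from the impossibility of solving the exactly perturbed Einstein equation on the resolution) has to be controlled, which is one of the paper's main technical contributions.
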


It was proved very recently by Braun \cite[Theorem~2]{Braun20} that the fundamental group of the regular locus of a $\Q$-Fano variety is finite. Relying on that result, one can refine Theorem~\ref{thm:main_thm} and obtain that the varieties $Y_i$ satisfy the additional property: $\pi_1(Y_i^{\rm reg})=\{1\}$. \\

 Note that the semistability of $T_X$ for a Kähler-Einstein $\Q$-Fano variety $X$ was proved by Chi Li in \cite[Proposition~3.7]{ChiLi18} in the case where $X$ admits a resolution where all exceptional divisors have non-positive discrepancy, e.g. a crepant resolution. 

\subsection*{Strategy of proof of Theorem~\ref{thm:main_thm}} \,

\medskip 
There are two main steps in the proof of 
Theorem~\ref{thm:main_thm}.

\medskip 

$\bullet$ The first step is the object of Theorem~\ref{polystable} where one proves that $T_X$ is the direct sum of stable subsheaves that are parallel with respect to the Kähler-Einstein metric $\om$ on $X_{\rm reg}$. This is achieved by computing slopes of subsheaves using the metric induced by the Kähler-Einstein metric and using Griffiths' well-known formula for the curvature of a subbundle. However, the presence of singularities (for $X$ and $\omega$) makes it hard to carry out the analysis directly on $X$. One has to work on a resolution using approximate Kähler-Einstein metrics as in \cite{GSS}. Yet an additional error term appears in the Fano case, requiring to introduce some new ideas to deal with it. 

\medskip

$\bullet$ The main result of the second step is a very general splitting theorem for algebraically integrable foliations, cf. Theorem~\ref{thm:splitting}. 
The context is as follows: after the first step above is completed, we know that the tangent bundle of $X$
splits as direct sum of foliations, say $(\mathcal F_i)_{i\in I}$. Since each $\mathcal F_i$
admits a complement inside $T_X$, it is weakly regular. It turns out that weakly regular foliations have many nice properties. The important fact which is established here is that an algebraically integrable, weakly regular foliation on a $\Q$-factorial projective variety with klt singularities is induced by a surjective, equidimensional morphism $X\to Y$, cf. 
Theorem~\ref{thm:regular_foliation_morphism}. When combined with suitable generalisations of techniques and results in \cite{cd1fzerocan}, this leads to the proof of Theorem~\ref{thm:splitting}.

\medskip
Finally, Theorem~\ref{thm:main_thm} can be proved by applying the splitting theorem from the second step to the foliations induced by the Kähler-Einstein metric as showed in the first step. Note that the algebraic integrability of these foliations follows from the deep results of \cite{bogomolov_mcquillan01}.\\

Our second main result is the following generalisation of a theorem of Tian  \cite[Theorem~0.1]{tian92}, which is a way to express some "strong" semistability of $T_X$.

\begin{bigthm}\label{thm2}
Let $X$ be a $\mathbb Q$-Fano variety admitting a Kähler-Einstein metric. Then the canonical extension of $T_X$ by $\sO_X$ is semistable with respect to $c_1(X)$.
\end{bigthm}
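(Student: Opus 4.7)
The plan is to adapt Tian's original argument from \cite[Theorem~0.1]{tian92}, valid for smooth Fano Kähler--Einstein manifolds, to the singular $\Q$-Fano setting, reusing the analytic framework developed for the proof of Theorem~\ref{polystable}: curvature estimates carried out on a resolution $\pi \from \X \to X$ with respect to approximate Kähler--Einstein metrics $\om_\ep$, with careful control of the Fano-specific error terms as $\ep \to 0$.

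Recall that the canonical extension is the reflexive sheaf $\sE$ of rank $n+1$ on $X$ fitting in
\[
 0 \lto \sO_X \lto \sE \lto T_X \lto 0,
\]
whose extension class is a nonzero multiple of $c_1(X)$. In particular the slope of $\sE$ with respect to $c_1(X)$ equals $c_1(X)^n/(n+1)$. On $\Reg X$, the Kähler--Einstein metric $\om$ induces a canonical Hermitian metric $h_\sE$ on $\sE$: in a smooth splitting $\sE \isom \sO_X \oplus T_X$ adapted to a $\sC^\infty$ representative of the extension cocycle built from $\om$, one takes the orthogonal sum metric $1 \oplus h_\om$, where $h_\om$ is the metric on $T_X$ induced by $\om$. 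Tian's classical computation, which crucially uses $\Ric \om = \om$, yields an explicit formula for the Chern curvature of $(\sE, h_\sE)$ from which, via Griffiths' curvature formula for a subbundle, one obtains the pointwise slope inequality $\mu_\om(\sF) \leq \mu_\om(\sE)$ for any subbundle $\sF \subset \sE|_{\Reg X}$.

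The second and main step is to upgrade this to an inequality for arbitrary reflexive subsheaves $\sF \subset \sE$ on the whole of $X$, which is where the singularities force us to deploy the full machinery of Theorem~\ref{polystable}. One passes to a resolution $\pi \from \X \to X$, equips $\pi^*\sE$ with the $\ep$-analogue $h_{\sE,\ep}$ of $h_\sE$ built from $\om_\ep$ and an approximate extension cocycle, and estimates $\deg_{\om_\ep}\bigl((\pi^*\sF)^{\ddual}\bigr)$ by integrating Griffiths' curvature formula against $\om_\ep^{n-1}$. This produces an inequality of the shape
\[
 \mu_{\om_\ep}(\sF) \,\leq\, \mu_{\om_\ep}(\sE) + \eta(\ep),
\]
where $\eta(\ep)$ collects the error terms stemming both from the discrepancy between the curvature of $h_{\sE,\ep}$ and the model Hermite--Einstein formula, and from the behaviour of the relevant tensors along $\Exc(\pi)$. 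Letting $\ep \to 0$ and using that $[\om_\ep]$ converges to $\pi^*c_1(X)$ in cohomology should then deliver the desired $\mu_{c_1(X)}(\sF) \leq \mu_{c_1(X)}(\sE)$.

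The main obstacle is showing that $\eta(\ep) \to 0$. The Fano regime contributes a positive error term with no counterpart in the Ricci-flat setting treated in \cite{GSS, HP}, and overcoming this difficulty for $T_X$ was precisely the novel technical ingredient in the proof of Theorem~\ref{polystable}. I expect the same idea to transplant to the canonical extension essentially verbatim, since $\sE$ is built from $T_X$ and $\sO_X$ by an extension whose class is represented analytically by $\om$ itself, so that $h_{\sE,\ep}$ depends on exactly the same approximate Kähler--Einstein data as the metric used on $T_X$ in the proof of Theorem~\ref{polystable}.
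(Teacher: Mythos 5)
Your overall strategy --- Tian's curvature computation for the extension bundle, carried out on a resolution with the approximate Kähler--Einstein metrics $\omte$ and with the error analysis of \S~\ref{stab} --- is indeed the route the paper takes, and your reduction to the resolution and your treatment of the error terms correspond to Lemma~\ref{reduction2} and to Steps 2--3 of the proof of Theorem~\ref{gang}. However, there is a genuine gap at the point where you propose to equip $\pi^*\sE$ with ``the $\ep$-analogue $h_{\sE,\ep}$ of $h_\sE$ built from $\om_\ep$ and an approximate extension cocycle''. To build a Hermitian metric on an extension of $T_\X$ by $\sO_\X$ out of a smooth $(0,1)$-form $\beta$ with values in $T_\X^*$, the Dolbeault class of $\beta$ must equal the extension class of that bundle. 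The extension class of the relevant bundle on $\X$ is $\pi^*c_1(X)$, whereas the approximate Kähler--Einstein metric $\omte$ lives in the class $(1+t)\bigl(\pi^*c_1(X)-\tfrac{t}{1+t}\sum_k\ep_k c_1(E_k)\bigr)$, which differs from $\pi^*c_1(X)$ for every $t>0$; there is no smooth representative of $\pi^*c_1(X)$ canonically attached to $\omte$. So the metric you want to write down does not exist on the fixed bundle, and this is precisely the difficulty the paper singles out in the introduction as making it ``impossible to compute directly the slope of a subsheaf of the canonical extension''.

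The missing idea is Step 1 of the proof of Theorem~\ref{gang}: one must change the holomorphic structure, introducing the algebraic family of extensions $\sV_{z_1,\dots,z_r}$ with classes $\pi^*c_1(X)+\sum_k z_k c_1(E_k)$ and specializing to $\sV_t$ with class $\alpha_t$, which \emph{does} contain the smooth form $\tfrac{1}{1+t}\omte$. One then needs a sheaf injection $\sF\subseteq\sV_t(E)$ extending $\sF\subseteq\sV$ (the twist by $\sO_\X(E)$ is forced, because the deformation of the cocycle is realized by logarithmic derivatives $df_{ki}/f_{ki}$ with poles along $E$), the observation that $c_1(\sV_t(E))=c_1(\sV)+c_1(E)$ is independent of $t$ with $E$ exceptional, and a correction by the torsion divisor of $\sV_t(E)/\sF$ since $\sF$ need not be saturated in the new bundle. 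Once this setup is in place, your expectation is correct: the curvature computation is Tian's (with the rescaling $\mu=n/(n+1)$ of the second fundamental form) and the error terms are controlled exactly as in \S~\ref{stab}.
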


We refer to Section~\ref{section: canonical extension} for the construction of the canonical extension. Note that one can slightly improve the result above by showing that the canonical extension is actually polystable, cf. Remark~\ref{rem:polystable}.

 \subsection*{Strategy of proof of Theorem~\ref{thm2}}\,
 
 \medskip
 
The proof of Theorem~\ref{thm2} takes up most of \S~\ref{semi}. 
It relies largely on the computations carried out in \S~\ref{poly} to prove the polystability of $T_X$, but on top of those, several new ideas are needed to overcome the presence of singularities. 
 
 First, one needs reduce the statement to one on a resolution in order to use analytic methods.  As we do that and introduce approximate Kähler-Einstein metrics, we need to change the vector bundle structure; this has the effect to make it impossible to compute directly the slope of a subsheaf of the canonical extension. We explain how to overcome this difficulty in the first step of the proof of Theorem~\ref{gang}. The rest of the proof uses a combination of the original idea of Tian and the computations of \S~\ref{poly}.\\

 \subsection*{Some remarks about Theorems~\ref{thm:main_thm}~\&~\ref{thm2}} \,
 
\medskip

$\bullet$ One can express Theorem~\ref{thm:main_thm} and Theorem~\ref{thm2} in a purely algebraic way using the notion of $K$-stability, cf. Remark~\ref{Kstable}. \\

$\bullet $ The conclusion of Theorem~\ref{thm2} remains true under the more general assumption that the greatest Ricci lower bound of $X$ is $1$, cf. Theorem~\ref{semistable}. \\

 $\bullet$ Combining the recent uniformization result  \cite[Theorem~1.3]{GKP20} with Theorem~\ref{thm2}, one can prove that a Kähler-Einstein $\Q$-Fano variety that achieves equality in the Miyaoka-Yau inequality is a quotient of the projective space, cf. \cite[Theorem~1.5]{GKP20} and the few lines above. Moreover, one can relax the Kähler-Einstein condition and only require that the greater Ricci lower bound is one, cf. item above. 
  
 \subsection*{Acknowledgements} It is our pleasure to thank Daniel Greb, Stefan Kebekus and Thomas Peternell for sharing their results \cite{GKP20} and encouraging us to prove Theorem~\ref{thm2}. H. G. thanks Sébastien Boucksom for useful discussions about Remark~\ref{Kstable}.
S. D. was partially supported by the ERC project ALKAGE (ERC grant Nr 670846), the CAPES-COFECUB project Ma932/19 and the ANR project Foliage (ANR grant Nr ANR-16-CE40-0008-01). H.G. was partially supported by the ANR project GRACK.

\section{Polystability of the tangent sheaf}
\label{poly}
\subsection{Set-up}
\label{set-up}
\subsubsection{Notation}

\begin{dfn}
\label{QFano}
Let $X$ be a projective variety of dimension $n$. We say that $X$ is a $\mathbb Q$-Fano variety if $X$ has klt singularities and $-K_X$ is an ample $\mathbb Q$-line bundle.
\end{dfn}

We also recall the definition of (twisted) singular Kähler-Einstein metric, cf. \cite{BBEGZ}. 

\begin{dfn}
\label{defKE}
Let $X$ be a $\mathbb Q$-Fano variety, let $\theta\in c_1(X)$ be a smooth representative and let $\gamma \in [0,1)$. A twisted Kähler-Einstein metric relatively to the couple $(\theta, \gamma)$ is a  closed, positive current $\omkeg \in c_1(X)$ with bounded potentials, which is smooth on $X_{\rm reg}$ and satisfies
$$\Ric \omkeg=(1-\gamma) \omkeg+\gamma \theta$$
on that open set. 

\noindent
When $\gamma=0$, we write $\omke:=\om_{{\rm KE}, 0}$ and we call it a Kähler-Einstein metric.
\end{dfn}

\begin{rem}
\label{criterion}
By \cite[Proposition~3.8]{BBEGZ}, a smooth Kähler metric $\omega \in c_1(X_{\rm reg})$ on $X_{\rm reg}$ satisfying $\Ric \om = \om$ extends to a Kähler-Einstein metric in the sense of Definition~\ref{defKE} if and only if $\int_{X_{\rm reg}} \om^n= c_1(X)^n$. In particular, if $f:Y\to X$ is a (finite) quasi-étale cover between $\mathbb Q$-Fano varieties and $\omke$ is a Kähler-Einstein metric on $X$, then $f^*\omke$ is a Kähler-Einstein metric on $Y$. 
\end{rem}

 Let $\omega_X\in c_1(X)$ be a fixed Kähler metric on $X$. We will systematically make either one of the following assumptions: 

\begin{aawlog}
\label{AAKE}
For any $\gamma \in (0,1)$ small enough, there exists a twisted Kähler-Einstein metric $\omkeg$ on $X$ relatively to $(\om_X, \gamma)$.
\end{aawlog}

\begin{aawlog}
\label{AKE}
There exists a Kähler-Einstein metric $\omke$ on $X$.
\end{aawlog}

\begin{rem}
\label{Kstable}
One can rephrase the Assumptions~\ref{AAKE}-\ref{AKE} using the algebraic notion of $K$-stability. It follows from \cite{LTW} (building upon results of \cite{CDS1,CDS2,CDS3}, \cite{T}, \cite{ChiLi17}, \cite{BBJ} in the smooth case)  that 
\begin{enumerate}
\item[$\bullet$] $X$ satisfies Assumption~\ref{AAKE} if and only if $X$ is $K$-semistable.
\item[$\bullet$] $X$ satisfies Assumption~\ref{AKE} if $X$ is uniformly $K$-stable, and the converse holds provided $\mathrm{Aut}^\circ(X)=\{1\}$.
\end{enumerate}
\end{rem}

\begin{ntn}
Let $\pi:\X\to X$ be a resolution of singularities of $X$ with exceptional divisor $E=\sum_{k\in I} E_k$ and discrepancies $a_k>-1$ given by
$$K_{\X}=\pi^*K_X +\sum a_k E_k.$$
There exist numbers $\ep_k\in \mathbb Q_+$ such that the cohomology class $\pi^*c_1(X)-\sum \ep_k c_1(E_k)$ contains a Kähler metric $\om_\X$. We fix it for the rest of the paper.  Next, we pick sections $s_k\in H^0(\X,\mathcal O_{\X}(E_k))$ such that $E_k=(s_k=0)$, smooth hermitian metric $h_k$ on $\mathcal O_{\X}(E_k)$ with Chern curvature $\theta_k:=i\Theta_{h_k}(E_k)$
and a volume form $dV$ on $\X$ such that $\Ric dV=\pi^*\om_X-\sum_{k\in I} a_k \theta_k$. We set 
\begin{equation}
\label{he}
h_E:=\prod_{k\in I} h_k
\end{equation}
which defines a smooth metric on $\sO_\X(E)$. \\
\end{ntn}

\subsubsection{The twisted Kähler-Einstein metric and its regularizations} 

\noindent
In this section, we assume that either Assumption~\ref{AAKE} or Assumption~\ref{AKE} is fulfilled so that there exists a (twisted) Kähler-Einstein metric $\omkeg$
\begin{itemize}
\item either for any $\gamma \in [0,1)$ such that $0<\gamma \ll 1$
\item or for $\gamma=0$. 
\end{itemize}
For the time being, the parameter $\gamma$ is \textit{fixed}.\\

We denote by $\pi^*\omkeg= \pi^*\om_X+dd^c \vp$ the singular metric solving
$$(\pi^*\om_X+dd^c \vp)^n=e^{-(1-\gamma)\vp}fdV$$
where $f=\prod_{i\in I} |s_i|^{2a_i} \in L^p(dV)$ for some $p>1$. It is known that $\vp$ is bounded (even continuous) on $\X$ and smooth outside $E$, cf. \cite{BBEGZ}. Note that $\varphi$ depends on $\gamma$, but as notation will get quite heavy later, we choose not to highlight that dependence.\\

Next, we choose a family $\psi_\ep\in \mathscr C^{\infty}(\X)$ of quasi-psh functions on $\X$ such that 
\begin{enumerate}
\item[$\bullet$] One has $\psi_\ep \to \vp$ in $L^1(\X)$ and in $\mathscr C^{\infty}_{\rm loc}(\X\setminus E)$. 
\item[$\bullet$] There exists $C>0$ such that $\|\psi_\ep\|_{L^{\infty}(\X) }\le C.$
\item[$\bullet$] There exists a continuous function $\kappa: [0,1]\to \mathbb R_+$ with $\kappa(0)=0$ such that $\pi^*\om_X+dd^c\psi_\ep \ge -\kappa(\ep) \om_{\X}$.
\end{enumerate}
This is a standard application of Demailly's regularization results. The smooth convergence outside $E$ claimed in the first item follows from the explicit expression of the function $\psi_{\ep}$, see e.g. \cite[(3.3)]{LDN}.\\

For $\ep, t \ge 0$, one introduces the unique function $\vpte \in L^{\infty}(X) \cap \mathrm{PSH}(\X, \pi^*\om_X+t \om_\X)$ solving
$$\begin{cases}
(\pi^*\om_X+t \om_\X+dd^c \vpte)^n = f_\ep e^{-(1-\gamma)\psi_\ep} e^{-c_t}dV\\
\sup_{\X} \vpte =0
\end{cases}$$
where 
\begin{enumerate}
\item[$\bullet$] $f_\ep:=e^{a_\ep}\prod (|s_i|^2+\ep^2)^{a_i}$,
\item[$\bullet$] $a_\ep$ is a normalizing constant such that $\int_\X f_\ep e^{-(1-\gamma)\psi_\ep}dV=c_1(X)^n$; it converges to $1$ when $\ep\to0$.
\item[$\bullet$] $c_t$ is defined by $\{\pi^*\om_X+t\om_\X\}^n=e^{c_t} \cdot c_1(X)^n$. 
\end{enumerate}
The existence and uniqueness of $\vpte$ follows from Yau's theorem \cite{Yau78} when $t,\ep>0$ (in which case $\vpte$ is actually smooth) while the general case is treated in \cite{EGZ}. It follows from \textit{ibid.} that there exists a constant $C>0$ such that 
\begin{equation}
\label{bounded}
\|\vpte\|_{L^{\infty}(X)} \le C
\end{equation}
for any $t,\ep \in [0,1]$. Moreover, any weak limit $\wh \varphi$ of a sequence $(\varphi_{t_k, \ep_k})$ is bounded and is a smooth limit outside $E$. Therefore, it solves the equation 
\[(\pi^*\om_X+dd^c \wh \vp)^n=e^{-(1-\gamma)\varphi}fdV \] 
on $\X$. By the uniqueness result \cite[Thm.~A]{EGZ}, we have $\wh \varphi=\varphi$. That is
\begin{equation}
\label{limit}
\vpte \underset{t,\ep \to 0}{\longrightarrow} \varphi \quad \mbox{in } L^1(\X) \,\, \mbox{and in } \mathscr C^{\infty}_{\rm loc}(\X\setminus E).
\end{equation}

\noindent
One sets 
\begin{equation}
\label{omte}
\omte:=\pi^*\om_X+t \om_\X+dd^c \vpte
\end{equation} which solves the equation
\begin{equation}
\label{ric}
\Ric \omte = \pi^*\om_X+(1-\gamma)dd^c \psi_\ep-\Theta_\ep
\end{equation}
where 
\begin{equation}
\label{thetaep}
\Theta_\ep=\Theta(E,h_E^{\ep})=\sum a_i \theta_{i,\ep}
\end{equation} is the curvature of 
\begin{equation}
\label{hep}
h_E^{\ep}=\prod_i (|s_i|^2+\ep^2)^{-1} h_i
\end{equation} and $\theta_{i,\ep}=\theta_i+dd^c \log(|s_i|^2+\ep^2)$ converges to the current of integration along $E_i$ when $\ep\to 0$. \\

\subsection{Stability of $T_X$.}
\label{stab}
Setup and notation as in \S~\ref{set-up}. 

Let $\sF\subset T_\X$ be a subsheaf of rank $r$. We can assume that $\sF$ is saturated in $T_\X$, i.e. $T_\X/\sF$ is torsion-free. This is because saturating a subsheaf increases its slope.

From now on, we choose small numbers $t,\ep>0$ which we will later let go to zero. The Kähler metric $\omte$ defined in \eqref{omte} induces an hermitian metric $h_{t,\ep}$ on $T_\X$ which in turn induces a hermitian metric $h_F$ on $F:=\sF|_W$ where $W\subset \X$ is the maximal locus where $\sF$ is a subbundle of $T_\X$. Then, it is classical (see e.g. \cite[Rem.~8.5]{Koba}) that one can compute the slope of $\sF$ by integrating the trace of the first Chern form of $(F,h_F)$ over $W$, i.e. 
\begin{equation}
\label{degre}
\int_{W} c_1(F,h_F) \wedge \omega_{t,\ep}^{n-1}= c_1(\sF) \cdot \{\omega_{t,\ep}\}^{n-1}.
\end{equation}
On $W$, we have the following standard identity (cf. e.g. \cite[Thm.~14.5]{Dem1})
\begin{equation*}
%\label{curvature}
i\Theta(F,h_F) = \mathrm{pr}_F \big(i\Theta(T_\X,h_{t,\ep})|_F \big)+\beta_{t,\ep}\wedge \beta_{t,\ep}^*
\end{equation*}
where $\beta \in \mathcal C^{\infty}_{0,1}(W, \mathrm{Hom}(T_\X, F))$ and $\beta^*$ is its adjoint with respect to $h_{t,\ep}$ and $h_F$. Therefore, we get
{\small
\begin{equation}
\label{curvature2}
c_1(F,h_F)\wedge \omte^{n-1} = \mathrm{tr}_{\rm End}\left( \mathrm{pr}_F \big(i\Theta(T_\X,h_{t,\ep})|_F \big)\right) \wedge \omte^{n-1}+\mathrm{tr}_{\rm End} (\beta_{t,\ep}\wedge \beta_{t,\ep}^*\wedge \omte^{n-1})
\end{equation}}

By \eqref{degre}, the integral of the left-hand side over $W$, yields $r$ times the slope of $\sF$ with respect to $\{\pi^*\om_X+t\om_\X\}$. As for the right-hand side, one can simplify the first term using the formula 
\begin{equation}
\label{contraction}
n \cdot i\Theta(T_\X,h_{t,\ep}) \wedge \omte^{n-1}=(\sharp \Ric \omte) \, \omte^n.
\end{equation}
Here we denote by $\sharp \Ric \omte$ the endomorphism of $T_\X$ induced by the Ricci curvature of $\omte$.

The equation \eqref{ric} is equivalent to
\begin{equation}
\label{ricci}
\Ric \omte = (1-\gamma)\omte+\gamma \pi^*\om_X-t\om_\X+(1-\gamma)dd^c(\psi_\ep-\vpte)-\Theta_\ep.
\end{equation}
Using the formula above, one gets 
\begin{align*}
\mu_{\omte}(\sF) \le& (1-\gamma)\mu_{\omte}(T_X) + \frac{1-\gamma}{nr}\underbrace{\int_\X  \mathrm{tr}_{\rm End}\mathrm{pr}_F(\sharp dd^c(\psi_\ep-\vpte))|_F \, \omte^{n}}_{=:{\rm \bf(I)}}\\
&+ \underbrace{\frac{\gamma}{n r}\int_\X \mathrm{tr}_{\rm End} \mathrm{pr}_F(\sharp \pi^*\omega_X)|_F \, \omte^{n}}_{=:{\rm \bf (II)}}
 -\frac 1{nr} \underbrace{\int_\X \mathrm{tr}_{\rm End} \mathrm{pr}_F(\sharp \Theta_\ep)|_F \, \omte^{n}}_{=:{\rm \bf (III)}}\\
 &+\frac 1{nr}\underbrace{\int_{W}\mathrm{tr}_{\rm End} (\beta_{t,\ep}\wedge \beta_{t,\ep}^*\wedge \omte^{n-1})}_{=:{\rm \bf (IV)}}.
\end{align*}

We therefore have four terms to deal with. To deal with ${\rm \bf (II)}-{\rm \bf (IV)}$, we will use the same computations as in \cite{GSS}, cf. explanations below. The main new term is ${\rm \bf (I)}$, which we treat first.

\medskip

\textbf{ The term ${\rm \bf (I)}$.}

\medskip

\noindent
It arises from the fact that, say when $\gamma=1$, we can not necessarily solve the perturbed equation $\Ric \omte = \omte-t\om_\X-\Theta_\ep$ unlike in the case where $K_X$ is ample or trivial. If all the discrepancies $a_i$ were negative, one could likely still solve that equation  using e.g. properness of Ding functional but we will not expand on that.\\

In order to deal with ${\rm \bf (I)}$, one makes the following observations: \\

$\bullet$ Given $\delta >0$, there exist $\eta=\eta(\delta)>0$ and an open neighborhood $U_\delta$ of $E\subset \X$ such that
\begin{equation}
\label{CN}
\forall \ep,t \le \eta, \quad \int_{U_{\delta}} (\om_{\psi_\ep}+\omte) \wedge \omte^{n-1} \le \delta
\end{equation}
where $\om_{\psi_\ep}=\pi^*\om_X+t\om_\X+dd^c \psi_{\ep}$. This inequality is a consequence of the Chern-Levine-Nirenberg inequality along with the bound of the potentials below
\begin{equation}
\label{bound}
\exists C>0, \forall \ep, t,  \quad \|\vpte \|_{L^\infty(\X)}+\|\psi_\ep \|_{L^\infty(\X)} \le C
\end{equation}
that we infer from \eqref{bounded}. Indeed, as explained in \cite{GSS}
one proceeds as follows. Let $\big(\Xi_\delta\big)_{\delta> 0}$ be a family of functions defined on $\mathbb R_+$, such that $\Xi_\delta(x)= 0$ if $x\leq \delta^{-1}$ and $\Xi_\delta(x)= 1$ if $x\geq 1+ \delta^{-1}$. Moreover we can assume that the derivative
of $\Xi_\delta$ is bounded by a constant independent of $\delta$.
Then we evaluate the quantity
\begin{equation}
\int_{\X} \Xi_\delta\big(\log\log\frac{1}{|s_E|^2}\big)(\om_{\psi_\ep}+\omte) \wedge \omte^{n-1} 
\end{equation}
and the proof of the classical Chern-Levine-Nirenberg inequality shows that the integral in \eqref{CN}
is smaller than
\begin{equation}\label{Poinc}
\int_{U_{\delta}}\omega_{E}^n
\end{equation}  
up to a constant which is independent of $t, \ep$. In \eqref{Poinc} we denote
by $\omega_{E}$ a metric with Poincar\'e singularities along the divisor $E$,
and by $U_{\delta}$ the support of the truncation function $\displaystyle \Xi_\delta\big(\log\log\frac{1}{|s_E|^2}\big)$. Here the main point is that the norm of the 
Hessian of the truncation function is uniformly bounded when measured with respect to $\omega_{E}$. 
The conclusion follows.
\smallskip

The hermitian endormorphism $\sharp dd^c(\psi_\ep-\vpte)$ is dominated (in absolute value) by the positive endomorphism $$\sharp (\om_{\psi_\ep}+\omte)$$ whose endomorphism trace is nothing by $\mathrm{tr}_{\omte}(\om_{\psi_\ep}+\omte)$. By \eqref{CN}, we are done with ${\rm \bf (I)}$ on $U_\delta$. \\

$\bullet$ The second observation is that given $K\Subset \X\setminus E$, there exists $\eta=\eta(K)>0$ such that 
\begin{equation}
\label{Cinfty}
\forall \ep,t \le \eta, \quad \|\psi_\ep-\vpte\|_{\mathscr C^2(K)} \le \delta.
\end{equation}
This is a consequence of the fact that $(\vpte)$ and $(\psi_\ep)$ converge uniformly (in $\ep,t$) to $\vp$ on $K$ by stability of the Monge-Ampère operator, cf. e.g. \cite[Thm.~C]{GZ11}, and have uniformly bounded $\mathscr C^p(K)$ norm for any $p$ thanks to  \eqref{bound}, Tsuji's trick and Evans-Krylov plus Schauder estimates. 

Therefore, one has $\pm \sharp dd^c(\psi_\ep-\vpte) \le \delta \om_\X$ hence ${\rm \bf (I)}$  is controlled on $K$ by $\delta \int_K \om_\X\wedge \omte^n \le C\delta$. \\

\textit{Conclusion.} 
Let $F_{t,\ep}:=\mathrm{tr}_{\rm End}\mathrm{pr}_F(\sharp dd^c(\psi_\ep-\vpte))|_F \, \omte^{n}$. 
One fixes $\delta>0$. We get a neighborhood $U_\delta$ of $E$ and a number $\eta'=\eta'(\delta)>0$ such that $\int_{U_\delta} F_{t,\ep}\le \delta$ for any $\ep,t\le \eta'$. Applying the second observation to $K=\X\setminus U_\delta$, we find $\eta''=\eta''(\delta)$ such that $\int_{X\setminus U_\delta} F_{t,\ep} \le C \delta$ for any $\ep,t \le \eta''$. Choosing $\eta:=\min\{\eta',\eta''\}$, we find that 
$$\forall \ep, t \le \eta, \quad \int_\X F_{t,\ep} \le C'\delta.$$
In short, the term ${\rm \bf (I)}$ converges to zero when $\ep,t\to 0$. 

\medskip

\textbf{ The term ${\rm \bf (II)}$.}

\medskip

\noindent
As $\pi^*\om_X \ge0$, one has 
\begin{align*}
\mathrm{tr}_{\rm End} \mathrm{pr}_F(\sharp \pi^*\omega_X)|_F \omte^n & \le \mathrm{tr}_{\rm End} (\sharp \pi^*\omega_X) \omte^n \\
& =\tr_{\omte}(\pi^*\om_X) = n \,\pi^*\om_X \wedge \omte^{n-1}.
\end{align*}
Integrating over $X$, one finds 
\begin{equation*}
{\rm \bf (II)} \le \gamma  r^{-1} (\pi^*c_1(X) \cdot \{\omte\}^{n-1})
\end{equation*}
and the right-hand side converges to $\frac{\gamma n}{r} \mu(T_\X)$ when $t\to 0$, where the slope is taken with respect to $\pi^*c_1(X)$.

\medskip

\textbf{ The term ${\rm \bf (III)}$.}

\medskip

\noindent
As said above, the arguments to treat this term are borrowed from \cite{GSS}. For the convenience of the reader, we will recall the important steps. To lighten notation, we will drop the index $i$. One can write $\Theta_\ep=  \frac{\ep^2 lD's|^2}{(|s|^2+\ep^2)^2}+ \frac{\ep^2}{|s|^2+\ep^2} \cdot \theta$. Let us set $g_\ep:=  \frac{\ep^2}{|s|^2+\ep^2} $. Up to rescaling $\omega_\X$, one can assume that $-\om_\X\le \theta \le \om_\X$ so that $\Theta_\ep +g_\ep\om_\X \ge 0$. Then one sees easily that
\begin{align*}
 \mathrm{tr}_{\rm End} \mathrm{pr}_F(\sharp \Theta_\ep)|_F \, \omte^{n} & \le  \mathrm{tr}_{\rm End} \Big(\sharp \Theta_\ep + \sharp(g_\ep \om_X)\Big)  \, \omte^{n} \\
 & = \Theta_\ep \wedge \omte^{n-1}+g_\ep \om_\X \wedge \omte^{n-1}
 \end{align*}
and one obtains that the term ${\rm \bf (III)}$ converges to zero when $\ep,t\to 0$ since
\begin{enumerate}
\item[$\bullet$] $\int_X \Theta_\ep \wedge \omte^{n-1}=c_1(E)\cdotp \{\pi^*\om_X+t\om_\X\}^{n-1}$ and $E$ is exceptional, 
\item[$\bullet$] $\int_X g_\ep \om_\X \wedge \omte^{n-1} \to 0$ when $\ep,t\to 0$ thanks to the smooth convergence to $0$ outside $E$ and the Chern-Levine-Nirenberg inequality combined with the bound \eqref{bounded} on the potentials, cf. first item in Part {\rm \bf (I)}.
\end{enumerate}

\medskip

\textbf{ The term ${\rm \bf (IV)}$.}

\medskip
\noindent
Note that the term $\beta_{t,\ep}\wedge \beta_{t,\ep}^*$ is pointwise negative in the sense of Griffiths on $W$. In particular, the term ${\rm \bf (IV)}$ is non-positive. Since ${\rm \bf (I)}$ and ${\rm \bf (III)}$ converge to zero, this shows that 
\begin{equation}
\label{slope}
\mu(\sF) \le \big(1+\gamma\big( \frac nr-1\big) \big) \cdot \mu(T_\X)
\end{equation}
where the slope is taken with respect to $\pi^*c_1(X)$. 

Working under Assumption~\ref{AAKE}, one obtains the inequality~\eqref{slope} above for any $\gamma>0$ small enough. In particular, this shows that under Assumption~\ref{AAKE}, $T_\X$ is semistable with respect to $\pi^*c_1(X)$.

From now on, we assume that the stronger Assumption~\ref{AKE} holds; i.e. one can choose $\gamma=0$. Assume additionally that there exists a subsheaf $\sF\subset T_\X$ with the same slope as $T_\X$ and let $\sF^{\rm sat}$ be its saturation in $T_\X$; it is a subbundle in codimension one. As the slope has not increased by saturation, $\sF=\sF^{\rm sat}$ in codimension one on $\X\setminus E$. Therefore, if we set $W^\circ:=W\cap (\X\setminus E)$, then $W^\circ \subset \X\setminus E$ has codimension at least two and by the above computation, one has 
$$\lim_{\ep,t\to 0} \int_{W^\circ}  (\beta_{t,\ep}\wedge \beta_{t,\ep}^*\wedge \omte^{n-1})=0.$$
We know by \eqref{limit} that $\beta_{t,\ep} \to \beta_{\infty}$ locally smoothly on $W^\circ$ when $\ep,t \to 0$ where $\beta_{\infty}$ is the second fundamental form induced by the hermitian metric $h_{\rm KE}$ induced by $\pi^*\omke$ on ${T_\X}|_{W^\circ}$ and on $F$ by restriction. By Fatou lemma, we have $\beta_\infty \equiv 0$ on $W^\circ$, that is, we have a holomorphic decomposition $T_\X|_{W^\circ}=F\oplus F^\perp$ where the orthogonal is taken with respect to $h_{\rm KE}$.  \\

We are now ready to prove

\begin{thm}
\label{polystable}
Let $X$ be a $\mathbb Q$-Fano variety.

\begin{enumerate}
\item[$(i)$] If Assumption~\ref{AAKE} is satisfied, then $T_X$ is semistable with respect to $c_1(X)$.

\item[$(ii)$] If Assumption~\ref{AKE} is satisfied, then $T_X$ is polystable with respect to $c_1(X)$. More precisely, we have:

\begin{enumerate}
\item[$\bullet$] Any saturated subsheaf $\sF\subset T_X$ with $\mu(\sF)=\mu(T_X)$ is a direct summand of $T_X$ and $\sF|_{X_{\rm reg}}\subset T_{X_{\rm reg}}$ is a parallel subbundle with respect to $\omke$.
\item[$\bullet$]  There exists a decomposition $$T_X=\bigoplus_{i\in I} \sF_i$$
such that $\sF_i$ is stable with respect to $c_1(X)$ and $\sF_i|_{X_{\rm reg}}\subset T_{X_{\rm reg}}$ is a parallel subbundle with respect to $\omke$.
\end{enumerate}
\end{enumerate}
\end{thm}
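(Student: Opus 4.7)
The analytic work preceding the statement has already produced the fundamental slope inequality \eqref{slope} for any saturated subsheaf $\sF \subset T_{\widehat X}$, and, when $\gamma = 0$ and equality holds, the vanishing of the limit second fundamental form $\beta_\infty \equiv 0$ on the codimension $\ge 2$ open set $W^\circ = W \cap (\widehat X \setminus E)$. My task is to transfer these conclusions from $\widehat X$ to $X$ and to extract the structural statements.

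For Part $(i)$, I will use a standard slope comparison: to a saturated subsheaf $\sG \subset T_X$ associate the saturated hull $\sG_{\widehat X} \subset T_{\widehat X}$ of its pullback on a big open set over which $\pi$ is an isomorphism onto a big open subset of $X_{\rm reg}$. Since $E$ is $\pi$-exceptional, the projection formula yields $c_1(\sG_{\widehat X}) \cdot \pi^*c_1(X)^{n-1} = c_1(\sG) \cdot c_1(X)^{n-1}$, so the slopes with respect to $\pi^*c_1(X)$ on $\widehat X$ and with respect to $c_1(X)$ on $X$ coincide. Under Assumption~\ref{AAKE}, inequality \eqref{slope} holds for all sufficiently small $\gamma > 0$; letting $\gamma \to 0$ gives $\mu(\sG) \le \mu(T_X)$, i.e.\ semistability of $T_X$ with respect to $c_1(X)$.

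For Part $(ii)$, take $\gamma = 0$ under Assumption~\ref{AKE} and consider any saturated $\sG \subset T_X$ with $\mu(\sG) = \mu(T_X)$. Equality in \eqref{slope} applied to $\sF := \sG_{\widehat X}$ then forces $\beta_\infty \equiv 0$ on $W^\circ$ as derived in the excerpt, producing a holomorphic, $\pi^*\omke$-orthogonal splitting $T_{\widehat X}|_{W^\circ} = F \oplus F^\perp$. Transported via $\pi$, this yields a holomorphic orthogonal splitting of $T_{X_{\rm reg}}$ away from a set of codimension $\ge 2$; since $X$ is normal and $T_X$ is reflexive, both summands extend uniquely to reflexive subsheaves of $T_X$ whose direct sum is $T_X$. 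Parallelism of $\sG|_{X_{\rm reg}}$ for $\omke$ is then automatic, because a holomorphic subbundle of a hermitian holomorphic bundle is parallel for the Chern connection precisely when its second fundamental form vanishes.

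The decomposition into stable summands in the second bullet of Part $(ii)$ follows by induction on rank. If $T_X$ is not already stable, pick $\sG \subset T_X$ of minimal positive rank among the saturated subsheaves with $\mu(\sG) = \mu(T_X)$; the previous step splits off $\sG$ as a direct summand, and rank-minimality forces $\sG$ to be stable. Iterating on the complement, which is itself polystable of the same slope, yields the required decomposition. The principal technical hurdle is the clean descent in Part $(ii)$: one has to combine the slope comparison with the reflexive extension of the orthogonal splitting across the codimension $\ge 2$ exceptional locus. Everything else is either already contained in the preceding analytic computations or is a formal consequence of them.
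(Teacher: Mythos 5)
Your proposal is correct and follows essentially the same route as the paper: the slope comparison between $\sF\subset T_X$ and its saturated lift in $T_{\X}$ combined with letting $\gamma\to 0$ in \eqref{slope} for part $(i)$, and in part $(ii)$ the vanishing of $\beta_\infty$ in the equality case, the reflexive extension of the orthogonal splitting across the codimension $\ge 2$ locus via $j_*$, and the induction on rank starting from a minimal-rank subsheaf of maximal slope. No gaps.
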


\begin{proof}
Let $\sF\subset T_X$ be a subsheaf and let $\alpha:=c_1(X)$. The sheaf $\sF$ induces a subsheaf $\sG^\circ\subset  T_\X|_{\X\setminus E}$ and we denote by $\sG\subset T_\X$ the saturation of $\sG^\circ$ in $T_\X$. By the arguments above (cf. inequality~\eqref{slope} and the comments below it), one has $\mu_{\pi^*\alpha}(\sG) \le \mu_{\pi^*\alpha}(T_\X) = c_1(X)^n/n=\mu_{\alpha}(T_X)$. Moreover, one has clearly $\mu_{\pi^*\alpha}(\sG)=\mu_{\alpha}(\sF)$. This shows that $T_X$ is semistable with respect to $c_1(X)$.  

Now, assume that there exists a Kähler-Einstein metric $\omke$. If $\sF\subset T_X$ satisfies $\mu_{\alpha}(\sF)=0$, then $\mu_{\pi^*\alpha}(\sG)=0$ and we have showed above that $\pi^*\omke$ induces a splitting ${T_\X}|_W=\sG|_W\oplus (\sG|_W)^\perp$ over a Zariski open subset $W\subset \X\setminus E$ whose complement in $\X\setminus E$ has codimension at least two. Set $V:=\pi(W)\subset X_{\rm reg}$ so that  $\sF|_V$ is a subbundle of $T_X$ and we have a splitting 
$T_X|_V= \sF|_V\oplus (\sF|_V)^\perp$ induced by $\omke$ and $\mathrm{codim}_X(X\setminus V) \ge 2$. 

Let us denote by $j:V\hookrightarrow X$ the open immersion. As $\sF\subset T_X$ is saturated, it is reflexive, hence $j_*(\sF|_V) =\sF$. Moreover, $(\sF|_V)^\perp$ extends to a reflexive sheaf $\sF^{\perp}:=j_*((\sF|_V)^\perp)$ on $X$ satisfying $T_X= \sF\oplus \sF^\perp$ on the whole $X$. In particular, $\sF$ is a direct summand of $T_X$ and as such, it is subbundle of $T_X$ over $X_{\rm reg}$. By iterating this process and starting with $\sF$ with minimal rank, one can decompose $T_X=\bigoplus_{i \in I} \sF_i$ into reflexive sheaves which, over $X_{\rm reg}$, are parallel (pairwise orthogonal) subbundles with respect to $\omke$. 
\end{proof}

\section{Semistability of the canonical extension}
\label{semi}
In this section, we keep using the setup and notation of \S~\ref{set-up}. 

\subsection{The canonical extension}\label{section: canonical extension}

Let $\sE$ be a coherent sheaf on $X$ sitting in the exact sequence below
\begin{equation}
\label{ES}
0\longrightarrow \Omega_X^{[1]} \longrightarrow \sE \longrightarrow \sO_X \longrightarrow 0
\end{equation}
The sheaf $\sE$ is automatically torsion-free and it is locally free on $X_{\rm reg}$. 

\begin{rem}
\label{localisation}
Let $U\subset X$ be a non-empty Zariski open subset. As an extension of $\sO_X$ by $ \Omega_X^{[1]}$, $\sE|_U$ is uniquely determined by the image of $1\in H^0(U,\sO_X)$ in $H^1(U, \Omega_X^{[1]})$ under the connecting morphism in the long exact sequence arising from $H^0(U,\--)$. 
\end{rem}

\medskip

From now on, one assumes that the extension class of $\sE$ is the image of $c_1(X)$ in $H^1(X,\Omega_X^1)$ under the canonical map $$\mathrm{Pic}(X)\otimes\mathbb{Q} \simeq H^1(X,\sO_X^*)\otimes\mathbb{Q} \to H^1(X,\Omega_X^1)\to H^1(X,\Omega_X^{[1]}).$$ This is legitimate since $K_X$ is $\mathbb Q$-Cartier. 

\medskip

\begin{dfn}
\label{canonicalextension}
The dual $\sE^*$ of the sheaf $\sE$ sitting in the exact sequence \eqref{ES} with extension class $c_1(X)$ is called the canonical extension of $T_X$ by $\sO_X$. 
\end{dfn}

The exact sequence \eqref{ES} is locally splittable since for any affine $U\subset X$, one has $h^1(U,\Omega_U^{[1]})=0$.
%{\color{red} j'ai enlevé l'annulation du ext car par la remarque 3.1 on a besoin que du $h^1$ dans notre cas.}
In particular, when one dualizes \eqref{ES}, one see that the canonical extension of $T_X$ by 
$\sO_X$ sits in the short exact sequence below
\begin{equation}
\label{ES2}
0\longrightarrow \sO_X  \longrightarrow \sE^* \longrightarrow T_X \longrightarrow 0.
\end{equation}

\medskip 

The goal of this section is to prove the following, cf. Theorem~\ref{thm2}.

\begin{thm}
\label{semistable}
Let $X$ be a $\Q$-Fano variety. If Assumption~\ref{AAKE} is satisfied, then the canonical extension $\sE^*$ of $T_X$ by $\sO_X$ is semistable with respect to $c_1(X)$.
\end{thm}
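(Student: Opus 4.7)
The plan is to adapt the curvature-integration method of \S\ref{stab} to the canonical extension $\sE^*$, following Tian~\cite{tian92} in the smooth case. As a preliminary reduction, consider a saturated subsheaf $\sF\subset\sE^*$. If $\sO_X\subset\sF$, the saturation of $\sF/\sO_X$ in $T_X$ is a subsheaf of $T_X$, and combining $\mu(\sE^*)=\tfrac{n}{n+1}\mu(T_X)$ with the semistability of $T_X$ (Theorem~\ref{polystable}(i)) yields $\mu(\sF)\le\mu(\sE^*)$ by the elementary computation $\mu(\sF)=\tfrac{r}{r+1}\mu(\sF/\sO_X)$, where $r=\mathrm{rk}(\sF/\sO_X)\le n$. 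The substantive case is $\sF\cap\sO_X=0$: there $\sF$ maps isomorphically to a subsheaf $\sG\subset T_X$ and could a priori have slope as large as $\mu(T_X)>\mu(\sE^*)$, so excluding this requires using the non-splitting of the canonical extension in an essential analytic way.

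On the resolution $\X$, the approximate twisted Kähler-Einstein metric $\omte$ represents the class $\pi^*c_1(X)+t[\om_\X]$ rather than $\pi^*c_1(X)$. Correspondingly, introduce an approximate canonical extension $\wt\sE^*_{t,\ep}$ fitting in $0\to\sO_\X\to\wt\sE^*_{t,\ep}\to T_\X\to 0$ with extension class $[\omte]$, realized as the (dual Atiyah) extension attached to a $\mathbb Q$-line bundle $L_t$ endowed with a Hermitian metric $h_t$ of Chern curvature $\omte$. The Chern connection of $h_t$ produces a $\mathcal C^\infty$-splitting of $\wt\sE^*_{t,\ep}$, on which we put the orthogonal Hermitian metric with trivial component on $\sO_\X$ and with the metric induced by $\omte$ on $T_\X$. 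A direct computation, reminiscent of Tian's key identity in the smooth Fano case, shows that the Chern curvature of $\wt\sE^*_{t,\ep}$ has a block form in which the sub-bundle $\sO_\X$ contributes $-\omte$ and the quotient $T_\X$ contributes $i\Theta(T_\X,\omte)$, with off-diagonal blocks given by the usual second fundamental forms of the extension.

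With this metric in place, the subsheaf $\sF\subset\sE^*$ pulls back to $\X$ as a holomorphic subsheaf of $\pi^{[*]}\sE^*$ and, via the common underlying $\mathcal C^\infty$ bundle $\sO_\X\oplus T_\X$, determines a $\mathcal C^\infty$-subbundle on the maximal locus $W\subset\X\setminus E$ where it is locally free. Applying Griffiths' formula~\eqref{curvature2} to this subbundle viewed inside $\wt\sE^*_{t,\ep}$, integrating against $\omte^{n-1}$, and invoking the twisted Kähler-Einstein equation~\eqref{ricci}, one obtains a slope inequality. The error terms ${\rm(I)}$--${\rm(III)}$ from \S\ref{stab} reappear verbatim and vanish in the limit; the Griffiths term ${\rm(IV)}$ remains non-positive by Griffiths-negativity of subbundles; and the diagonal $-\omte$ block in $i\Theta(\wt\sE^*_{t,\ep})$ produces the factor that yields $\mu(\sE^*)$ rather than $\mu(T_\X)$ on the right-hand side. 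Taking $t,\ep\to 0$ and then $\gamma\to 0$ under Assumption~\ref{AAKE} gives $\mu(\sF)\le\mu(\sE^*)$.

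The principal obstacle, flagged in the introduction, is that $\pi^{[*]}\sE^*$ and $\wt\sE^*_{t,\ep}$ have distinct extension classes and are non-isomorphic as holomorphic bundles, so a holomorphic subsheaf of $\pi^{[*]}\sE^*$ does not descend to a holomorphic subsheaf of $\wt\sE^*_{t,\ep}$; only its underlying $\mathcal C^\infty$-subbundle does. The failure of $\bar\partial$-closedness of the inclusion, measurable via a Dolbeault representative of the class difference $[\omte]-\pi^*c_1(X)=t[\om_\X]$, contributes an extra term in the Griffiths identity that must be shown to be $O(t)$ and to disappear in the limit. Resolving this technicality is the heart of the new argument; once it is in place, the remainder is a fusion of Tian's original curvature computation with the perturbation analysis of \S\ref{stab}.
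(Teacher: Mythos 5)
Your overall architecture matches the paper's: reduce to the resolution $\X$, build an approximate extension adapted to $\omte$, run Tian's curvature computation with a rescaled second fundamental form, and recycle the error-term analysis ${\rm\bf(I)}$--${\rm\bf(III)}$ from \S~\ref{stab}. (The preliminary dichotomy on whether $\sO_X\subset\sF$ is harmless but not needed.) However, you have explicitly left open the one step that carries the actual content: how to compare a holomorphic subsheaf of the fixed extension $\wh\sE^*$ (class $\pi^*c_1(X)$) with the metrized approximate extension (class containing $\omte$). As you note, the two holomorphic bundles are non-isomorphic, and passing to the ``common underlying $\mathcal C^\infty$ bundle'' destroys holomorphy of the inclusion; Griffiths' formula and identity~\eqref{degre} only compute the algebraic slope of a \emph{holomorphic} subsheaf, so the $\bar\partial$-defect term you propose to estimate is not a small perturbation of a valid slope identity but the very obstruction to having one. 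Declaring it ``$O(t)$'' is not a proof, and it is not clear the estimate is even true for an arbitrary subsheaf whose singular locus approaches $E$.

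The paper's resolution of this is purely algebraic and is the idea your proposal is missing. One does not take the extension class to be $[\omte]=\pi^*c_1(X)+t[\om_\X]$; instead one uses that $\om_\X\in\pi^*c_1(X)-\sum_k\ep_k c_1(E_k)$, so that $\tfrac{1}{1+t}\omte$ lies in a class $\alpha_t$ differing from $\alpha=\pi^*c_1(X)$ only by \emph{exceptional} classes. The family $\sV_{z_1,\dots,z_r}$ of extensions with classes $\alpha+\sum_k z_k c_1(E_k)$ admits explicit \v{C}ech comparison maps built from the logarithmic derivatives $df_{ki}/f_{ki}$ of local equations of the $E_k$; these are meromorphic with poles along $E$, and after twisting by $\sO_\X(E)$ one obtains a genuine holomorphic sheaf injection $\sF\hookrightarrow\sV_t(E)$ (equation~\eqref{subsheaf}). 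Since $c_1(\sV_t(E))=c_1(\sV)+c_1(E)$ and $E$ is $\pi$-exceptional, the extra twist costs nothing in the limit, and the slope of $\sF$ can then be legitimately computed with the smooth metric $h_{\sV_t(E)}$, with a sign-favorable correction for the possible non-saturation of $\sF$ in $\sV_t(E)$. Without this deformation-plus-twist construction (or a rigorous substitute for it), your argument does not close.
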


The proof of Theorem~\ref{semistable} above is divided into two main steps corresponding to the next two sub-sections. First one can reduce the statement above to a stability property on the resolution $\X$ thanks to Lemma~\ref{reduction2} and then we prove the said statement, cf. Theorem~\ref{gang}.  

\subsection{Reduction to a statement on the resolution}
\label{reduction}
\noindent 
Let $\wh \sE$ be the vector bundle on $\X$ sitting in the exact sequence below
\begin{equation}
\label{ES3}
0\longrightarrow \Omega_\X^{1} \longrightarrow \wh \sE \longrightarrow \sO_\X \longrightarrow 0
\end{equation}
such that its extension class is $\pi^*c_1(X) \in H^1(\X,\Omega_\X^1).$
Its dual sits in the exact sequence 
\begin{equation}
\label{ES4}
0\longrightarrow\sO_\X \longrightarrow \wh \sE^* \longrightarrow T_\X \longrightarrow 0.
\end{equation}

\begin{lem}
\label{reduction2}
If the vector bundle $\wh\sE^*$ is semistable with respect to $\pi^*c_1(X)$, then the torsion-free sheaf $\sE^*$ is semistable with respect to $c_1(X)$. 
\end{lem}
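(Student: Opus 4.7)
The plan is to reduce semistability of $\sE^*$ on $X$ to semistability of $\wh\sE^*$ on $\X$ by associating to every saturated subsheaf $\sF \subset \sE^*$ a subsheaf $\wh\sF \subset \wh\sE^*$ of the same rank, and then to verify that the slopes of $\sF$ and $\wh\sF$ (respectively of $\sE^*$ and $\wh\sE^*$) agree once one uses $c_1(X)$ on $X$ and $\pi^*c_1(X)$ on $\X$. Semistability of $\wh\sE^*$ will then immediately imply semistability of $\sE^*$.

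Set $\X^\circ := \pi^{-1}(X_{\rm reg})$, so that $\pi|_{\X^\circ}\colon \X^\circ \to X_{\rm reg}$ is an isomorphism and the exceptional divisor $E$ is contained in $\X \setminus \X^\circ$. By Remark~\ref{localisation}, the extension $\sE^*|_{X_{\rm reg}}$ is determined by the image of $c_1(X)$ in $H^1(X_{\rm reg},\Omega_{X_{\rm reg}}^1)$, while $\wh\sE^*|_{\X^\circ}$ is determined by the image of $\pi^*c_1(X)$ in $H^1(\X^\circ,\Omega_{\X^\circ}^1)$; these correspond under $\pi|_{\X^\circ}$, yielding a canonical identification $\pi^*(\sE^*|_{X_{\rm reg}}) \isom \wh\sE^*|_{\X^\circ}$. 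Given a saturated subsheaf $\sF \subset \sE^*$, I define $\wh\sF \subset \wh\sE^*$ to be the saturation (in $\wh\sE^*$, on $\X$) of the subsheaf whose restriction to $\X^\circ$ is the pullback of $\sF|_{X_{\rm reg}}$ under the identification above. By construction $\rk \wh\sF = \rk \sF$, and $\wh\sF$ coincides with the reflexive pullback $\pi^{[*]}\sF$ over $\X^\circ$.

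The slope equality is then a projection-formula argument. Since $\wh\sF$ agrees with $\pi^{[*]}\sF$ outside $E$, its first Chern class has the form $c_1(\wh\sF) = \pi^*c_1(\sF) + \sum_k a_k E_k$ for some $a_k \in \Q$, where $c_1(\sF)$ is the Weil divisor class of the reflexive rank-one sheaf $\det \sF$. Because each $E_k$ is $\pi$-exceptional, the projection formula gives $(\pi^*c_1(X))^{n-1}\cdot E_k = c_1(X)^{n-1}\cdot \pi_*E_k = 0$, so
\[
c_1(\wh\sF)\cdot (\pi^*c_1(X))^{n-1} = \pi^*c_1(\sF)\cdot (\pi^*c_1(X))^{n-1} = c_1(\sF)\cdot c_1(X)^{n-1}.
\]
Hence $\mu_{\pi^*c_1(X)}(\wh\sF) = \mu_{c_1(X)}(\sF)$, and the same identity applied with $\sF = \sE^*$ yields $\mu_{\pi^*c_1(X)}(\wh\sE^*) = \mu_{c_1(X)}(\sE^*)$. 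If $\wh\sE^*$ is semistable with respect to $\pi^*c_1(X)$, the inequality $\mu_{\pi^*c_1(X)}(\wh\sF) \le \mu_{\pi^*c_1(X)}(\wh\sE^*)$ transfers to $\mu_{c_1(X)}(\sF) \le \mu_{c_1(X)}(\sE^*)$, as required.

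The main obstacle, although a modest one, is the clean construction of $\wh\sF$ and the verification that the two extensions really match above $X_{\rm reg}$; once this sheaf-theoretic identification is in place, the rest is bookkeeping via the projection formula and the vanishing of intersections of $\pi^*c_1(X)^{n-1}$ with exceptional cycles.
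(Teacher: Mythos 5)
Your proof is correct and follows essentially the same route as the paper's: identify $\wh\sE^*$ with $\pi^*\sE^*$ over the locus where $\pi$ is an isomorphism via \cref{localisation}, transport each subsheaf to a saturated subsheaf of $\wh\sE^*$, and match slopes using the projection formula and the vanishing of $\pi$-exceptional classes against $(\pi^*c_1(X))^{n-1}$. The only cosmetic difference is that the paper works over a big open subset $X^\circ\subseteq X_{\rm reg}$ over which $\pi$ is an isomorphism (rather than asserting this over all of $X_{\rm reg}$, which a general resolution need not satisfy), but this does not affect the argument.
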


\begin{proof}
Set $\alpha:=c_1(X)$.
Let $X^\circ\subseteq X_{\rm reg}$ be an open set with complement of codimension at least $2$ in $X$ such that the restriction $\pi_{|\X^\circ}$ of $\pi$ to $\X^\circ :=\pi^{-1}(X^\circ)$ induces an isomorphism $\X^\circ \simeq X^\circ$. By Remark~\ref{localisation} 
we have 
\begin{equation}
\label{identification}
(\pi^*\sE^*)_{|\X^\circ} \simeq \mathcal {\wh\sE^*}_{|\X^\circ}.
\end{equation}
Let $\sF\subseteq \sE^*$ be a subsheaf and let $\wh\sF \subseteq \wh\sE^*$ be the saturated subsheaf of $\wh\sE^*$ whose restriction to 
$\X^\circ$ is $(\pi^*\sF)_{|\X^\circ}$. By the projection formula together with the fact that $X \setminus X^\circ$ has codimension at least $2$ in $X$, we have
$$\mu_\alpha(\sF)=\mu_{\pi^*\alpha}(\wh\sF) \quad \textup{and} \quad \mu_\alpha(\sE^*)=\mu_{\pi^*\alpha}(\wh\sE^*).$$
The lemma follows easily.
\end{proof}

\subsection{Statement on the resolution}
In this section, we prove that the vector bundle $\wh \sE^*$ from \S~\!\ref{reduction} is semistable with respect to $\pi^*c_1(X)$, cf. Theorem~\ref{gang} below. In order to streamline the notation, we set $\sV:=\wh\sE^*$ and in the following we will not distinguish between the locally free sheaf $\sV$ and the associated vector bundle. Recall that $\sV$  fits into the exact sequence of locally free sheaves 
\begin{equation}\label{bond1}
0\longrightarrow \sO_\X\longrightarrow \sV\to T_\X\longrightarrow 0,
\end{equation}
We denote by $\beta\in H^1(\X, T_\X^\star)$ the second fundamental form.

\medskip

\noindent Our result in this section is a singular version of Theorem 0.1 in \cite{tian92}.

\begin{thm}
\label{gang} 
Let $X$ be a $\Q$-Fano variety satisfying Assumption~\ref{AAKE}. Let $\sV$ be the vector bundle on $\X$ appearing in \eqref{bond1}, whose extension class $\beta$ coincides with the inverse image of the 
  first Chern class of $X$ by the resolution $\pi:\X\to X$.
  
  \noindent
  Then $\sV$ is semistable with respect to $\pi^\star c_1(X)$.
\end{thm}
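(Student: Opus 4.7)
The plan is to follow the analytic strategy of Section~\ref{poly} on the resolution $\X$, using the twisted Kähler-Einstein approximants $\omte$, and to combine it with the classical idea of Tian~\cite{tian92} that a Kähler-Einstein metric can be used to endow the canonical extension with a Hermite-Einstein-type structure. The core new difficulty is that the extension class of $\sV$ is $\pi^*c_1(X)$, represented by $\pi^*\om_X$, whereas the natural analytic input $\omte$ lies in the shifted class $\{\pi^*\om_X+t\om_\X\}$, so $\omte$ cannot be used directly as a Dolbeault representative of the extension class of $\sV$.

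\textbf{Step 1: changing the holomorphic structure.} For each small $t,\ep>0$, introduce a locally free sheaf $\sV_{t,\ep}$ fitting into an extension
\[
0\longrightarrow\sO_\X\longrightarrow\sV_{t,\ep}\longrightarrow T_\X\longrightarrow 0
\]
whose extension class is represented, at the Dolbeault level, by $\omte$. As $C^\infty$ bundles, $\sV_{t,\ep}$ and $\sV$ are both identified with $\sO_\X\oplus T_\X$, but their holomorphic structures differ by a $\bar\partial$-closed $(0,1)$-form with values in $T_\X^*$ whose Dolbeault class is $t\{\om_\X\}$ and tends to zero as $t\to 0$. The main work of this step is to associate, to a given saturated subsheaf $\sF\subset\sV$, a saturated subsheaf $\wh\sF_{t,\ep}\subset\sV_{t,\ep}$ of the same rank whose slope with respect to $\{\omte\}$ converges to $\mu_{\pi^*c_1(X)}(\sF)$ as $(t,\ep)\to 0$. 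This is the step flagged as the principal obstacle in the introduction.

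\textbf{Step 2: curvature of $\sV_{t,\ep}$.} Equip $\sV_{t,\ep}$ with the Hermitian metric $H_{t,\ep}$ defined, for the $C^\infty$ splitting $\sV_{t,\ep}=\sO_\X\oplus T_\X$, as the orthogonal direct sum of the trivial metric on $\sO_\X$ and the metric on $T_\X$ induced by $\omte$. Since the second fundamental form of $\sO_\X\subset\sV_{t,\ep}$ is represented at the Dolbeault level by $\omte$, Griffiths' curvature formula for an extension expresses $i\Theta(\sV_{t,\ep},H_{t,\ep})$ in terms of $i\Theta(T_\X,h_{t,\ep})$ together with a bilinear term in $\omte$. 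Contracting with $\omte^{n-1}$ and invoking the twisted Kähler-Einstein equation~\eqref{ric} yields an approximate Hermite-Einstein identity for $(\sV_{t,\ep},H_{t,\ep})$, modulo the same kinds of error terms that appear in Section~\ref{poly}.

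\textbf{Step 3: slope inequality and conclusion.} For the subsheaf $\wh\sF_{t,\ep}\subset\sV_{t,\ep}$ produced by Step~1, apply the Griffiths subbundle curvature formula, exactly as in Section~\ref{poly}, to bound the degree of $\wh\sF_{t,\ep}$ from above by the $\omte$-trace of the projection of $i\Theta(\sV_{t,\ep},H_{t,\ep})$ onto $\wh\sF_{t,\ep}$, minus a nonpositive second-fundamental-form contribution. Integrating over $\X$, substituting Step~2, and controlling the auxiliary terms by the same arguments used for the four terms $\mathbf{(I)}$--$\mathbf{(IV)}$ in Section~\ref{poly}, one obtains
\[
\mu_{\{\omte\}}\!\left(\wh\sF_{t,\ep}\right)\le\mu_{\{\omte\}}\!\left(\sV_{t,\ep}\right)+O(\gamma)+o(1)\qquad\text{as }(t,\ep)\to 0.
\]
Passing to the limit $(t,\ep)\to 0$ via Step~1 and then $\gamma\to 0$ via Assumption~\ref{AAKE} yields $\mu_{\pi^*c_1(X)}(\sF)\le\mu_{\pi^*c_1(X)}(\sV)$, which is the desired semistability. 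The hardest part is unquestionably Step~1: once the identification between subsheaves of $\sV$ and subsheaves of $\sV_{t,\ep}$ is in place, the remainder is a direct combination of Tian's curvature computation for the canonical extension and the termwise estimates already developed in Section~\ref{poly}.
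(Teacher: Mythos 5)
Your overall architecture — deform the extension, metrize it so that the second fundamental form of $\sO_\X\subset\sV$ is represented by $\omte$, run Tian's curvature computation with the error terms controlled as in \S~\ref{poly} — is exactly the paper's strategy. But there is a genuine gap at the point you yourself flag as ``the main work'' of Step 1: you assert, without proof, that to each subsheaf $\sF\subset\sV$ one can associate a subsheaf $\wh\sF_{t,\ep}\subset\sV_{t,\ep}$ of the same rank whose slope converges to $\mu_{\pi^*c_1(X)}(\sF)$. This is not a routine fact about deformations of holomorphic structures: a subsheaf of the central fiber of a family of bundles need not deform to a subsheaf of nearby fibers (the relevant Hom spaces are only upper semicontinuous, so they can jump precisely at $z=0$), and without an explicit injection of the \emph{same} sheaf $\sF$ into the deformed bundle you have no way to compare degrees. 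Since this is the step on which the whole argument hinges, the proposal as written does not close.

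The paper's resolution is concrete and worth internalizing. Because $\{\om_\X\}=\pi^*c_1(X)-\sum_k\ep_k c_1(E_k)$, the deformed extension class differs from $\pi^*c_1(X)$ only by multiples of the classes $c_1(E_k)$, which are represented by the \v{C}ech cocycles $dg_{k,ij}/g_{k,ij}$ of the transition functions of $\sO_\X(E_k)$. One can therefore write down the deformed bundle $\sV_{z_1,\dots,z_r}$ by explicit transition matrices and map $\sF$ into it by replacing the local data $p_i$ of $\sF\subset\sV$ with $p_i+\sum_k z_k\frac{df_{ki}}{f_{ki}}\circ q_i$. The logarithmic derivatives have poles along $E$, which is why the injection lands in $\sV_{z}\otimes\sO_\X(E)$ rather than in $\sV_z$ itself; since $E$ is $\pi$-exceptional, the twist by $\sO_\X(E)$ costs nothing against $\pi^*c_1(X)^{n-1}$, and one recovers the slope comparison in the limit. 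Your Step 3 also silently assumes $\wh\sF_{t,\ep}$ is saturated; in the paper's construction $\sF$ need not be saturated in $\sV_t(E)$, and one must observe that the torsion of the quotient contributes a nonpositive correction $-c_1(D)\cdot\{\omte\}^{n-1}$ to the degree, which only helps the inequality. Finally, in Step 2 the approximate Hermite--Einstein identity requires rescaling the second fundamental form by $\sqrt{\mu}$ with $\mu=n/(n+1)$ to balance the diagonal blocks; this is Tian's normalization and is presumably what you intend, but it should be stated.
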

\medskip

\begin{proof} 
 The strategy of proof is as follows.  We would like to compute the slope of $\sF$ using an hermitian metric on $\sV$ induced by the (twisted) Kähler-Einstein metric, using an approximation process as in \S~\ref{stab}. As the natural metric in the extension class of $\sV$ is singular, we introduce an algebraic $1$-parameter family $(\sV_z)_{z\in \mathbb C}$ that can be endowed with natural smooth hermitian metrics for suitable $z \in \mathbb R$ close to zero and such that we have sheaf injections $\sV\hookrightarrow \sV_t \otimes \sO_\X(E)$. We then proceed to compute slopes following the strategy of \S~\ref{stab}.

\medskip

\textbf{Step 1. Deformations of $\sV$.}

\medskip
We pick an arbitrary subsheaf $\sF\subseteq \sV$ of the vector bundle $\sV$ sitting in the exact sequence below
$$0 \to \sO_\X \to \sV \to T_\X \to 0$$
and corresponding to the extension class
$$\alpha=(a_{ij})\in \mathrm{Ext}^1(T_\X,\sO_\X) \simeq
H^1(\X,{\sH}om(T_\X,\sO_\X))$$ relatively to a covering by open subsets $(U_i)$. The bundle $\sV$ can be obtained as follows: on $U_i$, it is the trivial extension, $\sV_{|U_i}={\sO_\X}_{|U_i}\oplus {T_\X}_{|U_i}$ and the transition functions are given by  
$$
\begin{pmatrix} \mathrm{Id}_{\sO_\X}|_{U_{ij}} & a_{ij} \\ 0 & \mathrm{Id}_{T_\X}|_{U_{ij}} \end{pmatrix}.
$$
The subsheaf $\sF$ is given by two morphisms of sheaves $p_i\colon \sF_{|U_i} \to {\sO_\X}_{|U_i}$ and $q_i\colon \sF_{|U_i} \to {T_\X}_{|U_i}$ satisfying 
$$\begin{cases}
p_i|_{U_{ij}}=p_j|_{U_{ij}}+a_{ij}\circ (q_j|_{U_{ij}}),\\
q_i|_{U_{ij}}=q_j|_{U_{ij}}.
\end{cases}$$

Recall that we have a reduced divisor $E=E_1+\cdots+E_r$. Up to refining the covering $(U_i)$, one can assume that 
$E_k$ is given by the equation $f_{ki}=0$ on $U_i$. The transition functions of $\sO_\X(E_k)$ are $g_{k,ij}=\frac{f_{kj}}{f_{ki}}$.

Now, given complex numbers $z_1, \ldots, z_r \in \mathbb C$, one considers the extension $\sV_{z_1,\ldots,z_r}$ of $T_\X$ by $\sO_\X$ whose class is 
$$\alpha+z_1[\frac{d g_{1,ij}}{g_{1,ij}}]+\cdots + z_r[\frac{dg_{r,ij}}{g_{r,ij}}]=\alpha+\sum_k z_kc_1(E_k).$$ 

Set $\sV_{z_1, \ldots, z_r}(E):=\sV_{z_1, \ldots, z_r} \otimes \sO_\X(E)$. Then, there is an injection of sheaves 
$$\sF \subseteq \sV_{z_1,\ldots,z_s}(E)$$ 
extending $\sF \subseteq \sV \subseteq \sV(E)$ for $(z_k)$ in a Zariski open neighborhood of $0\in \mathbb C^r$.

Indeed, consider the morphism $\sF_{|U_i} \to \sV_{z_1,\ldots,z_s}(E)|_{U_i}$ given by $p_i+\sum_k z_k \frac{df_{ki}}{f_{ki}} \circ q_i$ 
on the first factor and $q_i$ on the second. Those morphisms can be glued since one has

$$\frac{df_{ki}}{f_{ki}}=\frac{dg_{k,ij}}{g_{k,ij}} + \frac{df_{kj}}{f_{kj}},$$
for any index $k$. The induced map $\sF \to \sV_{z_1,\ldots,z_s}(E)$ is obviously injective for $(z_k)$ in a Zariski open neighborhood of $0\in \mathbb C^r$.\\

Now, recall that $\alpha=\pi^*c_1(X)$ and that the Kähler metric $\om_\X$ lives in the class $\alpha -\sum \ep_k c_1(E_k)$ for some $\ep_k>0$, so that the approximate Kähler-Einstein metric $\omte \in (1+t)\alpha_t$ where  $$\alpha_t:=\alpha-\frac{t}{1+t}\sum_k \ep_k c_1(E_k).$$
For any $t\in \mathbb  R$, we set 
$$\sV_t:=V_{z_1, \ldots,z_r} \quad \mbox{and} \quad \sV_t(E):=\sV_t \otimes \sO_\X(E)$$
where $z_k:=-\frac {t}{1+t}\cdot \ep_k$ for $1\le k \le r$. This vector bundle $\sV_t$ is the extension of $T_\X$ by $\sO_\X$ with extension class $\alpha_t$ and $\sV_t(E)$ comes equipped with a sheaf injection
\begin{equation}
\label{subsheaf}
\sF \subseteq {\sV}_t(E).
\end{equation}
Moreover, it is clear from the definition of $\sV_{z_1, \ldots, z_r}$ that we have 
\begin{equation}
\label{c1}
c_1(\sV_t(E))=c_1(\sV)+c_1(E)
\end{equation}
for any $t\in \R$. \\

\textbf{Step 2. Metric properties of $\sV_t(E)$. }

\medskip
First of all, we pick one number $\gamma>0$ as in Assumption~\ref{AAKE}. It will be fixed until the very end of the argument.

We seek to endow $\sV_t(E)$ with a suitable smooth hermitian metric, at least when $t>0$ is small enough. Given that $\sV_t(E)=\sV_t \otimes \sO_\X(E)$ and that we have already fixed a smooth hermitian metric $h_E$ on $\sO_\X(E)$ in \eqref{he}, it is enough to construct a hermitian metric on $\sV_t$.

Now, we can endow the bundles $\sO_{\X}$ and $T_\X$ with the trivial metric and the hermitian metric $h_{t,\ep}$ induced by $\omte$, respectively. Now, we set 
$$\beta_t = \frac 1{1+t} \omte \in \alpha_t$$
which we view as an element of $\cC^{\infty}_{0,1}(\X,T_\X^*)$. Relatively to a fixed $\mathcal C^{\infty}$ splitting of $\sV_t$, the direct sum metric $h_{\sV_t}$ induced on $\sV_t$ has a Chern connection $D_{\sV_t}$ which has the following expression
$$D_{\sV_{t}}=\begin{pmatrix}
d & -\beta_t \\
\beta_t^*& D_{T_\X}
\end{pmatrix}
$$
or equivalently
\begin{equation}\label{bond19}
D_{\sV_{t}}(s_1, s_2)= \left(d s_1-\beta_t\cdot s_2, \beta^\star_t\cdot s_1+ D_{T_\X}s_2\right)
\end{equation}
where $D_{T_\X}$ is the Chern connections induced by $h_{t,\ep}$ on $T_\X$. Of course, it depends strongly on the parameters $t,\ep$. We denote by $\beta^\star_t \in \cC^{\infty}_{1,0}(\X,T_\X)$ the adjoint of $\beta_t\in \cC^{\infty}_{0,1}(\X,T_\X^*)$. Moreover, the Chern curvature of $D_{\sV_t}$ is given by
$$\Theta(\sV_t,h_{\sV_t})=\begin{pmatrix}
 - \beta_t \wedge\beta_t^* & D'_{T_\X^*} \beta_t \\
\overline \partial \beta_t^*& \Theta(T_\X, h_{t,\ep})- \beta_t^*\wedge \beta_t
\end{pmatrix}
$$
where $D'_{T_\X^*}$ is the $(1,0)$-part of the Chern connection of $(T^\star_\X,h_{t,\ep}^*)$.

\smallskip

\noindent We analyze next  several quantities which are playing a role in the evaluation of the curvature of $\sV_{t}$.\\

$\bullet $ \textit{The factor $\beta_t$}.

\medskip

\noindent
The form $\beta_t$ is given by
\begin{equation}\label{bond11}
  \beta_t= \frac{1}{1+t}\sum \omega_{p\overline q}\left(\frac{\partial}{\partial z_p}\right)^\star\otimes
  dz_{\overline q}
\end{equation}
where $\omega_{p\overline q}$ are the coefficients of $\omte$ with respect to the
coordinates $(z_i)_{i=1,\dots,n}$. Its adjoint is computed by the formula
\begin{equation}\label{bond20}
\langle\beta_t \cdot v, w \rangle+ \langle v, \beta^\star_t \cdot w \rangle= 0,
\end{equation}
where the first bracket is the standard hermitian product in $\mathbb C$ and the second one is the one induced by $(T_\X , h_{t,\ep})$. We have
\begin{equation}\label{bond12}
\beta^\star_t= -\frac{1}{1+t}\sum \frac{\partial}{\partial z_i}\otimes
  dz_{i}.
\end{equation} 
We have the following formulas
\begin{equation}\label{bond13}
D^\prime_{T_\X^*} \beta_t= 0, \qquad \overline\partial \beta^\star_t= 0.
\end{equation}
The first equality holds since $\omte$ is a K\"ahler metric while the second one is obvious from \eqref{bond12}.

\noindent Moreover, we have

\begin{equation}\label{bond14}
(1+t)^2 \cdot \beta_t\wedge \beta_t^\star\wedge \omte^{n-1}= -\frac{1}{n}\omte^n
\end{equation}  
as well as
\begin{equation}\label{bond15}
(1+t)^2 \cdot \beta_t^\star\wedge \beta_t= \omte \otimes \mathrm{Id}_{T_\X}.
\end{equation}
\medskip

$\bullet $ \textit{The curvature of $\sV_{t}$}.

\medskip

If we replace $\beta_t$ by $(1+t) \sqrt\mu \beta_t$ for some positive number $\mu$, this does not affect the complex structure of the bundles at stakes but only the metrics. Moreover, we see from the identities \eqref{bond13}-\eqref{bond14}-\eqref{bond15} that the curvature becomes
\begin{equation*}
\Theta(\sV_{t},h_{\sV_{t}}) \wedge \omte^{n-1}=
\begin{pmatrix}
\frac \mu n \omte^n&0 \\
0 & \Theta(T_\X,h_{t,\ep}) \wedge \omte^{n-1}- \mu \omte^n \otimes \mathrm{Id}_{T_\X}
\end{pmatrix}\\
%&+(\Theta_E\wedge \omte^{n-1}) \otimes \mathrm{Id}_{\sV_{E,t}}.
\end{equation*}
Now we choose $\mu$ so that $\frac \mu n=1-\mu$, i.e. $\mu:=\frac n {n+1}$. Recalling \eqref{contraction} and the expression of the Ricci curvature of $\omte$ given in \eqref{ricci}, we get that
\begin{equation*}
\Theta(T_\X,h_{t,\ep}) \wedge \omte^{n-1}- \mu \omte^n \otimes \mathrm{Id}_{T_\X} = \frac 1{n+1} \omte^n \otimes \mathrm{Id}_{T_\X} +A_{t,\ep,\gamma} \omte^n
\end{equation*}
where 
\begin{equation}
\label{ateg}
A_{t,\ep,\gamma}=-\gamma \mathrm{Id}_{T_\X} +\sharp\big[\gamma\pi^*\om_X-t\om_\X+(1-\gamma)dd^c(\psi_\ep-\vpte)-\Theta_\ep\big]
\end{equation}
is such that the number $$a_{t,\ep,\gamma}:=\frac 1 n\int_\X \mathrm{tr}_{\rm End} \mathrm{pr}_F (A_{t,\ep, \gamma})|_F \, \omte^n$$ satisfies
 \begin{equation}
 \label{aep}
 \limsup_{\gamma \to 0} \limsup_{t\to 0}\limsup_{\ep \to 0} a_{t,\ep, \gamma} = 0
 \end{equation}
thanks to the computations of \S~\!\ref{stab}. \\

\medskip

$\bullet $ \textit{The curvature of $\sV_{t}(E)$}.

\medskip

Finally, we endow $\sV_t(E)$ with the metric $h_{\sV_t(E)}:=h_{\sV_t} \otimes h_E$. It satisfies 
\begin{equation}
\label{curvatureformula}
\Theta(\sV_{t}(E),h_{\sV_{t}(E)}) \wedge \omte^{n-1}= 
\frac 1{n+1} \omte^n \otimes \mathrm{Id}_{\sV_{t}}
+A_{t,\ep,\gamma} \omte^n+(\Theta_E\wedge \omte^{n-1}) \otimes \mathrm{Id}_{\sV_{t}(E)}.
\end{equation}
where 
$A_{t,\ep,\gamma}$ is defined in \eqref{ateg} and satisfies \eqref{aep}.\\

\textbf{Step 3. The slope inequality. }

\medskip
Now, one wants to follow the strategy in \S~\ref{stab} and compute the slope of $\sF$ using the induced metric $h_{F_t}$ from $(\sV_{t}(E),h_{\sV_{t}(E)})$ under the sheaf injection \eqref{subsheaf}. The metric $h_{F_t}$ is well-defined only on the locus $W\subset \X$ where $F_t:=\sF|_{W}$ is a subbundle. As $\sF$ may not be saturated in $\sV_t(E)$, the complement of $W$ may have codimension one. However, we have the formula
\begin{align*}
\mu_{\omte}(\sF) &= \frac 1r \int_{W} c_1(F_t,h_{F_t}) \wedge \omte^{n-1} -c_1(D) \cdot \{\omte\}^{n-1}\\
& \le \frac 1r \int_{W} c_1(F_t,h_{F_t}) \wedge \omte^{n-1} \\
&\le   \mu_{\omte}(\sV_{t}(E)) +a_{t,\ep, \gamma}+ c_1(E)\cdot \{\omte\}^{n-1}
\end{align*}
where $D$ is an effective divisor such that $\sO_X(D)=\det((\sV_{t}(E)/\sF)_{\rm tor})$.
Since $E$ is $\pi$-exceptional, the conclusion follows from the curvature formula \eqref{curvatureformula} along with \eqref{aep} and the two easy facts below
\begin{enumerate}
\item[$\bullet$] $\mu_{\omte}(\sF) \to \mu_{\alpha}(\sF)$ when $t\to 0$,
\item[$\bullet$] $\mu_{\omega_{t,\ep}}(\sV_{t}(E)) \to \mu_{\alpha}(\sV)$ when $t,\ep\to0$ since $E$ is exceptional, cf. \eqref{c1}.
\end{enumerate}
Theorem~\ref{gang} is now proved. 
\end{proof}

\begin{rem}
\label{rem:polystable}
If $\mu_\alpha(\sF)=0$ and assuming Assumption~\ref{AKE}, then the same arguments as in the end of Section~\ref{stab} show the the orthogonal complement of $\sF \subset \sV_0(E)$ with respect to $h_{\sV_0(E)}$ on $\X\setminus E$ is holomorphic. Thanks to \eqref{identification} in Lemma~\ref{reduction2}, this shows that the canonical extension $\sE^*$ is polystable with respect to $c_1(X)$. 
\end{rem}

\section{A splitting theorem}

\subsection{Foliations} In this section, we recollect some results about foliations that we will use later on for the reader's convenience.
We refer to \cite[Sec. 3 and 4]{cd1fzerocan} and the references therein for notions around foliations on normal varieties and their singularities.

Here we only recall the notion of weakly regular foliation.
Let $\sF$ be a foliation of positive rank $r$ on a normal variety $X$. The $r$-th wedge product of the inclusion $\sF \subseteq T_X$ gives a map $$\sO_X(-K_\sF) \inj (\wedge^rT_X)^{**}.$$ We will refer to the dual map 
$$\Omega_X^{[r]} \to \sO_X(K_\sF)$$ as the \textit{Pfaff field} associated to $\sF$.
The foliation $\sF$ is called \textit{weakly regular} if the induced map
$$ (\Omega_X^r\otimes\sO_X(-K_\sF))^{**}\to \sO_X$$
is surjective (see \cite[Sec. 5.1]{cd1fzerocan}).

\medskip

Examples of weakly regular foliations are provided by the following result (see \cite[Lem. 5.8]{cd1fzerocan}).

\begin{lem}\label{lemma:direct_summand_regular}
Let $X$ be a normal variety, and let $\sF$ be a foliation on $X$. Suppose that there exists a distribution $\sG$ on $X$ such that 
$T_X=\sF\oplus \sG$. Then $\sF$ is weakly regular.
\end{lem}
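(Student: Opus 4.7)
The plan is to produce a section of the Pfaff field $\Omega_X^{[r]} \to \sO_X(K_\sF)$ using the direct sum decomposition of $T_X$. Since the map appearing in the definition of weak regularity is obtained from the Pfaff field by tensoring with $\sO_X(-K_\sF)$ and taking double duals, producing such a section will immediately imply that the map $(\Omega_X^r \otimes \sO_X(-K_\sF))^{**} \to \sO_X$ also admits a section, and hence is surjective.

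The hypothesis $T_X = \sF \oplus \sG$ furnishes a projection $p \colon T_X \to \sF$ satisfying $p \circ \iota = \mathrm{id}_\sF$, where $\iota \colon \sF \hookrightarrow T_X$ denotes the inclusion. Applying the functor $(\wedge^r(-))^{**}$ produces morphisms
$$(\wedge^r \iota)^{**} \colon \sO_X(-K_\sF) \to (\wedge^r T_X)^{**}, \qquad (\wedge^r p)^{**} \colon (\wedge^r T_X)^{**} \to \sO_X(-K_\sF)$$
whose composition equals the double dual of $\wedge^r(p \circ \iota) = \mathrm{id}_{\wedge^r \sF}$, that is, the identity on $\sO_X(-K_\sF)$. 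In other words, $(\wedge^r p)^{**}$ is a retraction of the inclusion that is used in the definition of the Pfaff field.

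Dualizing this retraction yields the desired section $\sO_X(K_\sF) \to \Omega_X^{[r]}$ of the Pfaff field, which is what the argument needs. The only subtle point I expect to be the main obstacle is the compatibility of the reflexive hull with composition and with exterior powers. I will handle it by restricting the whole construction to the open subset $X^\circ \subseteq X$ where $T_X$, $\sF$, and $\sG$ are all locally free, where the decomposition $\wedge^r T_X = \bigoplus_{i+j=r} \wedge^i \sF \otimes \wedge^j \sG$ makes the retraction a transparent projection onto the $(r,0)$-summand. Since $\sF$ and $\sG$ are reflexive (as direct summands of the reflexive sheaf $T_X$), the complement $X \setminus X^\circ$ has codimension at least two, so the section constructed on $X^\circ$ extends uniquely to all of $X$ by Hartogs' theorem applied to the reflexive sheaves $\Omega_X^{[r]}$ and $\sO_X(K_\sF)$.
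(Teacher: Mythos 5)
Your argument is correct: the projection $T_X \to \sF$ induces, on the big open set where $T_X$, $\sF$ and $\sG$ are locally free and then on all of $X$ by reflexivity, a retraction of $\sO_X(-K_\sF) \to (\wedge^r T_X)^{**}$, and a split injection/surjection remains split under the additive functors $(-)^*$ and $((-)\otimes \sO_X(-K_\sF))^{**}$ — which is exactly the point one needs here, since mere surjectivity of the Pfaff field would \emph{not} automatically transfer to the twisted map when $K_\sF$ fails to be Cartier. The paper does not reprove this lemma but cites \cite[Lem. 5.8]{cd1fzerocan}, whose proof is the same splitting/retraction argument, so your route coincides with the intended one.
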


The following lemma says that a weakly regular foliation has mild singularities if its canonical divisor is Cartier and the ambient space has klt singularities (see \cite[Lem. 5.9]{cd1fzerocan}).

\begin{lem}\label{lemma:regular_versus_canonical}
Let $X$ be a normal variety with klt singularities, and let $\sF$ be a foliation on $X$. 
Suppose that $K_\sF$ is Cartier.
If $\sF$ is weakly regular, then it has canonical singularities.
\end{lem}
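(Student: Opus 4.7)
The plan is to take a resolution of singularities $\pi\colon \tilde X \to X$, denote by $\tilde \sF$ the pulled-back foliation on $\tilde X$, and show that the $\pi$-exceptional integral divisor $D \defn K_{\tilde \sF} - \pi^* K_\sF$ (well-defined since both $K_{\tilde \sF}$ and $\pi^* K_\sF$ are Cartier, using that $K_\sF$ is Cartier by assumption) is effective. Unwinding the definition of weakly regular and twisting the surjection $(\Omega_X^r \otimes \sO_X(-K_\sF))^{**} \twoheadrightarrow \sO_X$ by the invertible sheaf $\sO_X(K_\sF)$, the hypothesis is equivalent to the surjectivity of the Pfaff field $\phi\colon \Omega_X^{[r]} \twoheadrightarrow \sO_X(K_\sF)$.

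The key step is to push forward the Pfaff field $\tilde \phi\colon \Omega_{\tilde X}^r \to \sO_{\tilde X}(K_{\tilde \sF})$ of $\tilde \sF$ and compare the outcome with $\phi$. Since $X$ is klt, I would invoke the extension theorem for reflexive differentials of Greb--Kebekus--Kov\'acs--Peternell to get the identification $\pi_* \Omega_{\tilde X}^r = \Omega_X^{[r]}$. On the other side, the projection formula combined with the Cartier property of $K_\sF$ yields
\[
\pi_* \sO_{\tilde X}(K_{\tilde \sF}) \isom \sO_X(K_\sF) \otimes \pi_* \sO_{\tilde X}(D),
\]
and because $D$ is $\pi$-exceptional, the sheaf $\pi_* \sO_{\tilde X}(D)$ is naturally a subsheaf of $\sO_X$ that equals $\sO_X$ if and only if $D$ is effective.

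Applying $\pi_*$ to $\tilde \phi$ therefore produces a morphism
\[
\pi_* \tilde \phi \colon \Omega_X^{[r]} \lto \sO_X(K_\sF) \otimes \pi_* \sO_{\tilde X}(D) \inj \sO_X(K_\sF)
\]
which, by functoriality of the Pfaff field, agrees with $\phi$ over the open locus where $\pi$ is an isomorphism. Since $\Omega_X^{[r]}$ is torsion-free, these two morphisms coincide globally. The surjectivity of $\phi$ onto $\sO_X(K_\sF)$ then forces the inclusion $\sO_X(K_\sF) \otimes \pi_* \sO_{\tilde X}(D) \inj \sO_X(K_\sF)$ to be an equality, hence $\pi_* \sO_{\tilde X}(D) = \sO_X$, and therefore $D$ is effective, as desired.

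The most delicate ingredient in this plan is the identification $\pi_* \Omega_{\tilde X}^r = \Omega_X^{[r]}$; this is the only place where the klt hypothesis on $X$ is used in a substantial manner. Once this reflexive extension is granted, the rest of the argument amounts to a short diagram chase together with the elementary observation that the pushforward of the structure sheaf twisted by a $\pi$-exceptional divisor detects effectivity of the divisor.
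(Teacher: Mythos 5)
The paper does not actually prove this lemma; it quotes it from \cite[Lem.~5.9]{cd1fzerocan}, and your argument is correct and is essentially the proof given there: the Cartier hypothesis turns weak regularity into surjectivity of the Pfaff field $\Omega_X^{[r]}\twoheadrightarrow \sO_X(K_\sF)$, the klt hypothesis enters only through the extension theorem $\pi_*\Omega^r_{\tilde X}\simeq \Omega_X^{[r]}$ of \cite{GKKP}, and then $\pi_*\sO_{\tilde X}(D)=\sO_X$ forces the exceptional divisor $D=K_{\tilde\sF}-\pi^*K_\sF$ to be effective. The only point to make explicit is the quantification: canonical singularities require non-negative discrepancy for \emph{every} exceptional divisorial valuation over $X$, so one should either run the argument for an arbitrary resolution or observe that every such valuation is realized on some resolution, to which your argument applies verbatim.
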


Next, we recall the behaviour of weakly regular foliations with respect to finite covers (see \cite[Prop. 5.13]{cd1fzerocan}).

\begin{lem}\label{lemma:regular_quasi_etale}
Let $X$ be a normal variety, let $\sF$ be a foliation on $X$, and let $f\colon X_1 \to X$ be a finite cover.
Suppose that each codimension $1$ irreducible component of the branch locus of $f$ is $\sF$-invariant. Then 
$\sF$ is weakly regular if and only if $f^{-1}\sF$ is weakly regular.
\end{lem}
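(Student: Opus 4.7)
The plan is to reduce the equivalence to a local statement at codimension-one points. Weak regularity asks for the surjectivity of a morphism of reflexive sheaves onto the structure sheaf, and since $X$ and $X_1$ are normal, this surjectivity can be checked at codimension-one points (which are regular points of a normal variety). Away from the branch divisor of $f$ the morphism is étale in codimension one, so pullback identifies the two Pfaff fields up to invertible factors and the equivalence is trivial there. The entire content of the lemma is therefore a local computation around a generic point $x_1$ of each ramification component lying above an $\sF$-invariant branch component $B \subseteq X$.

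At such a point I would choose étale-local coordinates $(z_1, \ldots, z_n)$ on $X$ with $B = \{z_1 = 0\}$ and $(w_1, \ldots, w_n)$ on $X_1$ such that $f$ takes the standard cyclic form $(w_1, w_2, \ldots, w_n) \mapsto (w_1^e, w_2, \ldots, w_n)$, where $e$ is the ramification index. The $\sF$-invariance of $B$ translates into the existence of a local frame of $\sF$ drawn from vector fields of the form $z_1\,\partial/\partial z_1$ and $\partial/\partial z_j$, $j \ge 2$. A direct calculation of the preimage in $T_{X_1}$ shows that $f^{-1}\sF$ is then locally generated by the corresponding vector fields $w_1\,\partial/\partial w_1$ and $\partial/\partial w_j$. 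From this explicit model I would compute the Pfaff fields of $\sF$ and $f^{-1}\sF$, obtaining the formula $K_{f^{-1}\sF} = f^*K_\sF + R$ (with $R$ the ramification divisor) and a natural morphism
\[
f^*\bigl(\Omega_X^r\otimes \sO_X(-K_\sF)\bigr)^{**} \lto \bigl(\Omega_{X_1}^r\otimes \sO_{X_1}(-K_{f^{-1}\sF})\bigr)^{**}
\]
which is an isomorphism near each ramification component. Surjectivity of the map to $\sO_{X_1}$ at $x_1$ therefore holds if and only if surjectivity of the map to $\sO_X$ at $f(x_1)$ holds, yielding the equivalence.

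The main obstacle is bookkeeping the twists by the ramification divisor: one must verify that the factor $w_1^{e-1}$ produced by $dz_1 = e\,w_1^{e-1}\,dw_1$ is absorbed exactly by the twist by $-R$, and that this absorption happens precisely because $B$ is $\sF$-invariant. If some branch component were generically transverse to $\sF$, additional vanishing along $R$ would appear that is not cancelled by the $-R$ twist, so the pullback morphism above would fail to be an isomorphism and the equivalence would break down.
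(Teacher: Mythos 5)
Your opening reduction is where the argument breaks down: surjectivity of the map $(\Omegap{X}{r}\otimes\sO_X(-K_\sF))^{**}\to\sO_X$ is \emph{not} a condition that can be checked at codimension-one points. A morphism of coherent (even reflexive) sheaves that is surjective outside a closed subset of codimension $2$ need not be surjective — consider $\sO_{\C^2}^{\oplus 2}\to\sO_{\C^2}$, $(a,b)\mapsto ax+by$, which is onto away from the origin but not at the origin. Worse, weak regularity holds \emph{automatically} at every codimension-one point: such points are regular points of the normal variety, a saturated subsheaf of $T_X$ is a subbundle there outside a set of codimension $\ge 2$, and the defining map is then tautologically onto. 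So your proposed reduction would show that every foliation is weakly regular. The entire content of the lemma lives over the locus of codimension $\ge 2$ (the singular loci of $X$, of $X_1$, and of the foliations), and the local computation at a generic point of a ramification component — however carefully the twists are bookkept — says nothing about it. The actual argument (this is \cite[Prop.~5.13]{cd1fzerocan}; the paper only cites it) instead compares the two weak-regularity maps globally, using the reflexive pull-back of differentials $f^{[*]}\Omegap{X}{[r]}\to\Omegap{X_1}{[r]}$ and, for the converse direction, pushforward along the finite map $f$ together with a trace/Galois argument; surjectivity is transferred through these functorial comparisons rather than checked pointwise in codimension one.

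Two further inaccuracies, both tied to the same misunderstanding of what invariance buys. First, your local frame $z_1\,\partial/\partial z_1,\ \partial/\partial z_j$ does not describe a saturated foliation for which $B=\{z_1=0\}$ is invariant: saturating $z_1\,\partial/\partial z_1$ yields $\partial/\partial z_1$, whose leaves cross $B$. Invariance of $B$ means precisely that near a generic point of $B$ the foliation is framed by fields tangent to $B$, say $\partial/\partial z_2,\dots,\partial/\partial z_{r+1}$. Second, with that correct model one finds that the hypothesis of the lemma gives $K_{f^{-1}\sF}\sim_{\Z} f^*K_\sF$ with \emph{no} ramification correction (the correction $\sum(e_D-1)D$ appears only over branch components that are \emph{not} invariant); your formula $K_{f^{-1}\sF}=f^*K_\sF+R$ is the opposite of what the invariance hypothesis is there to guarantee, and it is exactly the absence of this correction that makes the two Pfaff fields compatible under reflexive pull-back.
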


Finally, we recall the behaviour of foliations with canonical singularities with respect to finite covers and birational maps (see \cite[Lem. 4.3]{cd1fzerocan}).

\begin{lem}\label{lemma:canonical_quasi_etale_cover}
Let $f\colon X_1 \to X$ be a finite cover of normal varieties, and let $\sF$ be a foliation on $X$ with
$K_\sF$ $\mathbb{Q}$-Cartier. Suppose that each codimension $1$ component of the branch locus of $f$ is 
$\sF$-invariant. If $\sF$ has canonical singularities, then $f^{-1}\sF$ has canonical singularities as well.
\end{lem}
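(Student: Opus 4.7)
The plan is to prove this by relating discrepancies of $\sF$ and $f^{-1}\sF$ through compatible log resolutions of $X$ and $X_1$. The key ingredient is the pullback formula for foliated canonical divisors under finite covers with $\sF$-invariant branch: for any finite surjective morphism $g\colon Y'\to Y$ of normal varieties carrying a foliation $\sG$ with $K_\sG$ $\Q$-Cartier, and whose codimension one branch divisors are all $\sG$-invariant, one has $K_{g^{-1}\sG}=g^*K_\sG$. This is the foliated analogue of Riemann--Hurwitz, where only the branch components transverse to $\sG$ would contribute a $(m-1)D$ ramification term. Applied to $f$ itself, the hypothesis gives $K_{f^{-1}\sF}=f^*K_\sF$.

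Next, I would choose a log resolution $\mu\colon \wt X\to X$ of $(X,\sF)$ so that $\mu^{-1}(B)\cup\Exc(\mu)$ is an SNC divisor, where $B$ denotes the codimension one part of the branch locus of $f$, and so that moreover every $\mu$-exceptional divisor is $\wt\sF$-invariant, where $\wt\sF := \mu^{-1}\sF$. The latter can be arranged by only blowing up along $\sF$-tangent centres. Writing
$$K_{\wt\sF}=\mu^*K_\sF+\sum_i a_i E_i,$$
canonicity of $\sF$ gives $a_i\geq 0$. I would then form $\wt X_1$ as a log resolution of the normalisation of $\wt X\times_X X_1$, obtaining a commutative square with an induced finite morphism $\wt f\colon \wt X_1\to \wt X$ and a birational morphism $\mu_1\colon \wt X_1\to X_1$. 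By construction, the codimension one components of the branch locus of $\wt f$ lie in the SNC divisor above, and each is $\wt\sF$-invariant: strict transforms of components of $B$ are invariant by hypothesis on $f$, while $\mu$-exceptional branch components are invariant by the choice of $\mu$.

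The pullback formula from Step~1 then applies to $\wt f$ and yields $K_{\wt f^{-1}\wt\sF}=\wt f^*K_{\wt\sF}$. Combining with $\wt f^{-1}\wt\sF=\mu_1^{-1}f^{-1}\sF$ and $K_{f^{-1}\sF}=f^*K_\sF$, one computes
$$K_{\mu_1^{-1}f^{-1}\sF}=\wt f^*\!\left(\mu^*K_\sF+\sum_i a_i E_i\right)=\mu_1^*K_{f^{-1}\sF}+\sum_i a_i\,\wt f^*E_i.$$
Since $a_i\geq 0$ and each $\wt f^*E_i$ is an effective $\mu_1$-exceptional divisor, this exhibits non-negative discrepancies of $f^{-1}\sF$ along $\mu_1$. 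As canonicity can be tested on any single log resolution, this proves that $f^{-1}\sF$ has canonical singularities.

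The main obstacle is the invariance statement for $\mu$-exceptional branch components, which is what legitimises applying the pullback formula without extra ramification contributions. This relies on the availability of an $\sF$-equivariant resolution of foliated singularities; if one instead takes an arbitrary log resolution, non-invariant exceptional branch components would produce extra effective terms of the form $(m_j-1)F_j$, which however are still effective and can be absorbed on the right-hand side of the discrepancy comparison. Either route delivers the conclusion.
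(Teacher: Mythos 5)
The paper does not prove this lemma; it quotes it from \cite[Lem.~4.3]{cd1fzerocan}, whose proof is the standard one: for \emph{every} divisor $E_1$ over $X_1$ one builds a commutative square with a finite morphism between birational models of $X_1$ and $X$, applies the foliated Riemann--Hurwitz formula (where $\sF$-invariant branch components contribute no ramification term), and compares discrepancies. Your Step~1 and your discrepancy comparison are exactly this, and your observation that non-invariant \emph{exceptional} branch components only add effective, exceptional terms that can be absorbed is the right way to avoid any appeal to equivariant resolutions. However, two points in your write-up are genuine gaps rather than cosmetic ones.

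First, as constructed, $\wt f\colon \wt X_1\to \wt X$ is \emph{not} a finite cover: you define $\wt X_1$ as a log resolution of the normalization $N$ of $\wt X\times_X X_1$, so $\wt f$ factors as a birational morphism followed by a finite one, and the pullback formula from Step~1 does not apply to it. You must work with $N$ itself (normal, possibly singular): the finite map $N\to\wt X$ is where Riemann--Hurwitz lives, and $N\to X_1$ is proper birational, so discrepancies of $f^{-1}\sF$ are read off on $N$. Resolving $N$ further and still invoking the finite-cover formula is circular, since relating $K$ on the resolution to $K$ on $N$ is precisely a discrepancy computation for $f^{-1}\sF$.

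Second, the closing appeal to ``canonicity can be tested on any single log resolution'' is not available here. That reduction is a theorem for pairs $(X,\Delta)$ via the explicit behaviour of discrepancies under further blow-ups with snc centres; for foliations of arbitrary rank on normal varieties there is no analogous notion of foliated log resolution on which canonicity can be certified, and further blow-ups can a priori produce new divisors whose foliated discrepancies are not controlled by those on your chosen model. The correct structure is quantified over all divisors: given an arbitrary prime divisor $E_1$ over $X_1$ with nonempty centre, restrict its valuation to $K(X)$ to get a divisor $E$ over $X$, extract $E$ on some birational model $Z\to X$, and let $Z_1$ be the normalization of the dominant component of $Z\times_X X_1$; then $E_1$ appears on $Z_1$ (or one argues over $E$ directly), and your computation
$$K_{q^{-1}(f^{-1}\sF)}=g^*\Bigl(\mu^*K_\sF+\sum_i a_iE_i\Bigr)+R=\mu_1^*K_{f^{-1}\sF}+\sum_i a_i\,g^*E_i+R,\qquad R\ge 0 \text{ and } \mu_1\text{-exceptional},$$
gives $a(E_1,X_1,f^{-1}\sF)\ge 0$. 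With these two repairs --- both supplied by the same fibre-product device --- your argument coincides with the cited proof.
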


\begin{lem}\label{lemma:singularities_birational_morphism}
Let $q\colon Z \to X$ be a birational quasi-projective morphism of normal varieties, and let $\sF$ be a foliation on $X$.
Suppose that $K_\sF$ is $\mathbb{Q}$-Cartier and that $K_{q^{-1}\sF}\sim_\mathbb{Q} q^*K_\sF$.
If $\sF$ has canonical singularities, then $q^{-1}\sF$ has canonical singularities as well.
\end{lem}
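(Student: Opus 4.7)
The plan is to verify the defining property of canonical singularities for $q^{-1}\sF$ directly. Fix an arbitrary (projective) birational morphism $p\colon Z'\to Z$ and a prime $p$-exceptional divisor $F\subset Z'$; the goal is to show that the discrepancy $a(F,q^{-1}\sF)$ is non-negative.

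The first step is to observe that $F$ is also exceptional for the composition $r:=q\circ p\colon Z'\to X$. Indeed, $q$ is birational between normal varieties, so any $q$-exceptional prime divisor $D\subset Z$ satisfies $\operatorname{codim}_X q(D)\geq 2$, and on the complement of the $q$-exceptional locus the map $q$ preserves codimensions. Applied to the image $p(F)\subset Z$ (which has codimension $\geq 2$), this yields $\operatorname{codim}_X r(F)\geq 2$, and hence $F$ is $r$-exceptional.

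The second step is the comparison of canonical divisors. Using $p^{-1}(q^{-1}\sF)=r^{-1}\sF$ as foliations on $Z'$ together with the definition of foliated discrepancies, one gets the equality of $\Q$-Weil divisors on $Z'$
\[
r^*K_\sF+\sum_{i} a(E_i,\sF)\,E_i \;=\; p^*K_{q^{-1}\sF}+\sum_{j} a(F_j,q^{-1}\sF)\,F_j,
\]
where the two sums range over $r$- (resp. $p$-) exceptional prime divisors of $Z'$. Taking $\operatorname{ord}_F$ on both sides and invoking $r^*=p^*\circ q^*$ produces
\[
a(F,q^{-1}\sF)=a(F,\sF)+\operatorname{ord}_F\bigl(p^*(q^*K_\sF-K_{q^{-1}\sF})\bigr).
\]

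The third step uses the hypothesis $K_{q^{-1}\sF}\sim_{\Q} q^*K_\sF$ to kill the correction term. The $\Q$-divisor $D:=K_{q^{-1}\sF}-q^*K_\sF$ is supported on $q$-exceptional primes (both sides agree where $q$ is an isomorphism) and is $\Q$-linearly trivial, so $\pm D$ are both $q$-nef. The negativity lemma, applied to each sign in turn, forces $D=0$ as a $\Q$-Weil divisor on $Z$; consequently $p^*D=0$ and the correction term vanishes. Combining with the previous step gives $a(F,q^{-1}\sF)=a(F,\sF)\geq 0$, where the last inequality is exactly the assumption that $\sF$ has canonical singularities on $X$. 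I expect the only delicate points to be the codimension observation in the first step and the invocation of the negativity lemma in the third (which is where quasi-projectivity of $q$ is used); everything else is a direct unwinding of definitions.
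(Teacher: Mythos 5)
Your Steps 1 and 2 are correct and follow the same route as the paper: compare the two discrepancy formulas on a common model $Z'$ and reduce everything to showing that the correction term $D:=K_{q^{-1}\sF}-q^*K_\sF$ does not contribute. The gap is in Step 3. The negativity lemma is a statement about \emph{proper} birational morphisms, and quasi-projectivity of $q$ does not give properness (in the paper's applications $X$ has been replaced by a small open neighbourhood of a generic point, so $q$ is genuinely non-proper). For a non-proper birational morphism the implication ``$D$ is exceptional and $\Q$-linearly trivial $\Rightarrow D=0$'' is simply false. Concretely, take $X=\mathbb{A}^2$, let $\bar Z=\mathrm{Bl}_0\,\mathbb{A}^2$ with exceptional curve $E$, let $\tilde L$ be the strict transform of a line $L$ through the origin, and set $Z:=\bar Z\setminus \tilde L$ with $q$ the induced morphism to $X$. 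Since $q^*L=\tilde L+E\sim 0$, the restriction $E|_Z$ is a nonzero, effective, $q$-exceptional divisor with $E|_Z\sim 0$ on $Z$; both $\pm E|_Z$ are $q$-nef, yet $E|_Z\neq 0$. So the conclusion $D=0$ cannot be extracted from the negativity lemma as you invoke it, and without it your identity only yields $a(F,q^{-1}\sF)=a(F,\sF)-\operatorname{ord}_F(p^*D)$ with $D\geq 0$, which is an inequality in the wrong direction.

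This is exactly the point where the paper's proof diverges from yours: it uses the quasi-projectivity hypothesis to extend $q$ to a \emph{projective} birational morphism $\bar q\colon \bar Z\to X$ with $Z\subseteq \bar Z$ open, and then runs the discrepancy comparison of \cite[Lem.~4.2]{cd1fzerocan} on projective models over $X$, where the negativity lemma is legitimately available; the conclusion is the equality $a(E,\bar Z,\bar q^{-1}\sF)=a(E,X,\sF)$ for divisors with centre meeting $Z$, from which the lemma follows by locality of discrepancies. To repair your argument you would need to carry out Step 3 after this compactification (or otherwise prove directly that $\operatorname{ord}_F(p^*D)\le a(F,\sF)$); as written, the crucial vanishing is unjustified.
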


\begin{proof}
By assumption, there exist a normal variety $\wb Z \supseteq Z$ and a projective birational morphism $\wb q \colon \wb Z \to X$ whose restriction to $Z$ is $q$. The same argument used in the proof of \cite[Lem. 4.2]{cd1fzerocan} shows that 
$$a(E,\wb Z,{\wb q}^{-1}\sF)=a(E,X,\sF)$$
for any exceptional prime divisor $E$ over $\wb{Z}$ with non-empty center in $Z$. The lemma follows easily.
\end{proof}

\subsection{Weakly regular foliations with algebraic leaves} This section contains a generalization of Theorem 6.1 in \cite{cd1fzerocan}. 
The following result is proved in \textit{loc. cit.} under the additional assumption that $\sF$ has canonical singularities.

\begin{thm}\label{thm:regular_foliation_morphism}
Let $X$ be a normal projective variety with $\mathbb{Q}$-factorial klt singularities, and let $\sF$ be a weakly regular foliation on $X$ with algebraic leaves. 
\begin{enumerate}
\item\label{thm:regular_foliation_morphism1}Then $\sF$ is induced by a surjective equidimensional morphism $p\colon X \to Y$ onto a normal projective variety $Y$.
\item\label{thm:regular_foliation_morphism2}Moreover, there exists an open subset $Y^\circ$ with complement of codimension at least $2$ in $Y$ such that $p^{-1}(y)$ is irreducible for any $y\in Y^\circ$.
\end{enumerate}
\end{thm}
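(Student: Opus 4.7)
The strategy is to reduce to \cite[Thm.~6.1]{cd1fzerocan}, which establishes the present statement under the extra hypothesis that $\sF$ has canonical singularities, by passing to a finite cover of $X$ on which the canonical class of the pulled-back foliation becomes Cartier. Since $X$ is $\mathbb{Q}$-factorial, the Weil divisor $K_\sF$ is $\mathbb{Q}$-Cartier; let $m$ be its Cartier index and let $f\colon X_1 \to X$ be the associated index-one cyclic cover, a finite Galois cover with group $G := \mathbb{Z}/m\mathbb{Z}$ that is étale in codimension one. Its branch locus then lies in codimension at least two and is trivially $\sF$-invariant, so Lemma~\ref{lemma:regular_quasi_etale} yields that $f^{-1}\sF$ is weakly regular on $X_1$. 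Moreover, $X_1$ inherits klt singularities from $X$ through the quasi-étale cover, and $K_{f^{-1}\sF} = f^*K_\sF$ is Cartier by construction, so Lemma~\ref{lemma:regular_versus_canonical} ensures that $f^{-1}\sF$ has canonical singularities. A minor technical point is that $X_1$ may fail to be $\mathbb{Q}$-factorial; this can be remedied by a $G$-equivariant small $\mathbb{Q}$-factorialization $q\colon \tilde X_1 \to X_1$ (existing by BCHM), on which canonical singularities of the pulled-back foliation are preserved by Lemma~\ref{lemma:singularities_birational_morphism} thanks to the Cartier-ness of $K_{f^{-1}\sF}$ together with the smallness of $q$.

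Applying \cite[Thm.~6.1]{cd1fzerocan} to this $\mathbb{Q}$-factorial klt model produces a surjective equidimensional morphism $p_1\colon X_1 \to Y_1$ onto a normal projective variety inducing $f^{-1}\sF$. Since $f^{-1}\sF = f^*\sF$ is $G$-invariant, for each $g \in G$ the composition $p_1 \circ g$ induces the same foliation as $p_1$, so there is a unique automorphism $\sigma_g$ of $Y_1$ with $\sigma_g \circ p_1 = p_1 \circ g$; this defines a $G$-action on $Y_1$ making $p_1$ equivariant. Setting $Y := Y_1/G$, the morphism $p_1$ descends to a surjective morphism $p\colon X \to Y$ onto a normal projective variety, and equidimensionality is preserved because $f$ and the quotient map $Y_1 \to Y$ are both finite. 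This proves part~(\ref{thm:regular_foliation_morphism1}).

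Part~(\ref{thm:regular_foliation_morphism2}) follows from the Stein factorization $p = s \circ p^\circ$ with $p^\circ\colon X \to Y^\sharp$ of connected fibers and $s\colon Y^\sharp \to Y$ finite: taking for $Y^\circ$ the complement of the branch locus of $s$ together with the image of the non-normal locus of fibers of $p^\circ$ produces an open subset of $Y$ whose complement has codimension at least two, over which each fiber $p^{-1}(y)$ coincides with a single connected equidimensional fiber of $p^\circ$ and is therefore irreducible by normality of $X$. The main obstacle I foresee is the descent step in the second paragraph: making rigorous the existence of the $G$-action on $Y_1$ and the descent of $p_1$ to a genuine morphism on $X$ (rather than a rational map), together with the handling of the $\mathbb{Q}$-factorialization issue so that the cited black-box theorem on $\tilde X_1$ indeed yields the desired morphism on $X_1$ after contracting $q$. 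An alternative would be to revisit the proof of \cite[Thm.~6.1]{cd1fzerocan} directly, replacing inputs that use canonical singularities of the foliation by the weak regularity properties developed in the preceding subsection, thereby bypassing the Galois cover altogether at the cost of a longer argument.
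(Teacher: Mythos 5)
Your reduction has two genuine gaps, the first of which is fatal to the strategy as written. The cyclic cover $f\colon X_1\to X$ "associated to $K_\sF$" does not exist globally: the construction of \cite[Def.~2.52]{KM} requires an actual isomorphism $\sO_X(mK_\sF)\simeq\sO_X$, not merely that $mK_\sF$ be Cartier. On a projective $X$ this forces $K_\sF$ to be torsion in $\operatorname{Cl}(X)$, which fails in general and in particular in the situation this theorem is used for (there $\sF$ is a direct summand of $T_X$ of positive slope, so $-K_\sF$ is far from torsion). This is why the paper only ever invokes the index-one cover \emph{after} replacing $X$ by a neighborhood of the generic point of a distinguished subvariety ($q(S)$ in Lemma~\ref{lemma:irreducible_fibers}, $q(E)$ in Lemma~\ref{lemma:small}), where $\sO_X(mK_\sF)\simeq\sO_X$ can be arranged; the global theorem cannot be obtained by a single such cover. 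The second gap is your proof of item~(\ref{thm:regular_foliation_morphism2}): a connected, equidimensional fiber of a morphism from a normal variety need not be irreducible (its components may meet in smaller dimension), so "irreducible by normality of $X$" does not work. Irreducibility of fibers over a big open set is precisely the content of Lemma~\ref{lemma:irreducible_fibers}, whose proof is substantive: it uses the quotient-singularity structure of the family of leaves in codimension two and the fact that the pulled-back foliation becomes \emph{regular} on the local smooth cover, to show two components through a general point of the bad locus would have to coincide.

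For comparison, the paper does not reduce to \cite[Thm.~6.1]{cd1fzerocan} by a global cover. It takes the family of leaves $p\colon Z\to Y$ with $q\colon Z\to X$ the natural birational morphism, proves irreducibility of fibers over a big open set (Lemma~\ref{lemma:irreducible_fibers}), and then shows $\Exc q=\emptyset$: $\mathbb{Q}$-factoriality of $X$ forces any component $E$ of $\Exc q$ to be a divisor, Lemma~\ref{lemma:small} shows $E=p^{-1}(p(E))$ and that $q$ contracts the leaves over a curve in $p(E)$ to the same cycle, contradicting the injectivity of $Y\to\operatorname{Chow}(X)$. The descent and extension issues you flag as "the main obstacle" are exactly what this Chow-variety argument and the two auxiliary lemmas are designed to handle; they cannot be bypassed by Stein factorization and a $G$-quotient.
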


Before we give the proof of Theorem \ref{thm:regular_foliation_morphism}, we need to prove a number of auxiliary statements. 

Throughout the present section, we will be working in the following setup.

\begin{setup}
\label{setup:fol}
Let $X$ and $Y$ be normal quasi-projective varieties, and let $p'\colon X \map Y$ be a dominant rational map with $r:=\dim X - \dim Y >0$. 
Let $Z$ be the normalization of the graph of $p'$, and let $p\colon Z \to Y$ and $q\colon Z \to X$ be the natural morphisms.
Let $\sF$ be the foliation induced by $p'$. 
\end{setup}

\begin{prp}\label{prop:family_leaves_canonical_bundle_formula}
Setting and notation as in \ref{setup:fol}, and assume that $K_\sF$ is Cartier.
\begin{enumerate}
\item\label{prop:family_leaves_canonical_bundle_formula1} Then the Pfaff field $\Omega_X^{[r]} \to \sO_X(K_\sF)$ associated to $\sF$ induces a map $$\Omega_Z^{[r]} \to q^*\sO_X(K_\sF)$$ which factors through the Pfaff field $\Omega_Z^{[r]} \to \sO_Z(K_{q^{-1}\sF})$ associated to $q^{-1}\sF$. In particular, there exists an 
effective $q$-exceptional Weil divisor $B$ on $Z$ such that $$K_{q^{-1}\sF}+B\sim_{\mathbb{Z}}q^*K_\sF.$$
\item\label{prop:family_leaves_canonical_bundle_formula2} Moreover, if $E$ is a $q$-exceptional prime divisor on $Z$ such that $p(E)=Y$, then $E \subseteq \textup{Supp}\,B$.
\end{enumerate}
\end{prp}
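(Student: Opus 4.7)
\textbf{Part (1).} The plan is to construct a natural inclusion of rank-one reflexive sheaves $q^*\sO_X(-K_\sF)\hookrightarrow \sO_Z(-K_{q^{-1}\sF})$ whose cokernel is supported on the $q$-exceptional locus; dualizing this will produce the effective $q$-exceptional Weil divisor $B$ with the stated linear equivalence, and the factorization of the Pfaff field will be an immediate corollary of the construction. First I observe that since $Z$ is normal, $T_Z$ is torsion-free, and the differential $dq\colon T_Z\to q^*T_X$---being generically an isomorphism as $q$ is birational---is injective as a sheaf map. By definition of the pulled-back foliation, $q^{-1}\sF=dq^{-1}(\overline{q^*\sF})$, where $\overline{q^*\sF}$ denotes the saturation of $q^*\sF$ in $q^*T_X$; the inclusion $q^*\sF\hookrightarrow \overline{q^*\sF}$ has torsion cokernel supported on the $q$-exceptional locus, since $q^*\sF$ is already saturated over the étale locus of $q$.

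Taking $r$-th wedge powers of this inclusion and passing to reflexive hulls yields an injection
\[
q^*\sO_X(-K_\sF)=(q^*\det\sF)^{**}\hookrightarrow (\det\overline{q^*\sF})^{**},
\]
using that $K_\sF$ Cartier makes $q^*\sO_X(-K_\sF)$ a line bundle. On the other hand, the differential $dq$ identifies $q^{-1}\sF$ with $\overline{q^*\sF}$---by injectivity of $dq$ combined with saturatedness of both sides---hence $\sO_Z(-K_{q^{-1}\sF})\cong (\det\overline{q^*\sF})^{**}$ canonically. Composing gives the desired inclusion of rank-one reflexive sheaves, whose cokernel is supported on the $q$-exceptional locus; dualizing gives the inclusion $\sO_Z(K_{q^{-1}\sF})\hookrightarrow q^*\sO_X(K_\sF)$ whose defining section cuts out the effective $q$-exceptional divisor $B$. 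The factorization of the Pfaff field then holds by construction: the composition $\Omega_Z^{[r]}\to\sO_Z(K_{q^{-1}\sF})\hookrightarrow q^*\sO_X(K_\sF)$ agrees with the natural pullback of the Pfaff field of $\sF$ on the étale locus of $q$, and hence everywhere (since maps into a line bundle are determined by their restriction to a dense open).

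\textbf{Part (2).} Assume $E$ is $q$-exceptional with $p(E)=Y$ and, for contradiction, that $E\not\subseteq\supp B$. Then the inclusion from Part (1) is an isomorphism at the generic point $\eta_E$ of $E$; equivalently, $q^*\sF$ is already saturated in $q^*T_X$ at $\eta_E$. The horizontal hypothesis $p(E)=Y$ ensures that $p|_E$ is generically smooth, so at a general point $e\in E$ I can pick local coordinates $(x_1,\ldots,x_n)$ with $E=\{x_1=0\}$ and $p=(x_{r+1},\ldots,x_n)$; in these coordinates $q^{-1}\sF=T_{Z/Y}$ is freely generated by $\partial_{x_1},\ldots,\partial_{x_r}$. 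The main obstacle is deriving a contradiction from the saturatedness assumption using the fact that $q$ contracts $E$ to a proper subvariety $q(E)\subsetneq X$: the rank-drop of $dq$ along $E$ should force the local generator of $q^*\sF$ to differ from that of $\overline{q^*\sF}$ by a positive power of $x_1$, contradicting saturatedness. Carrying out this local analysis---tracking how the contraction structure of $q$ along a horizontal exceptional divisor interacts with the foliation $\sF$---is the delicate part of the proof.
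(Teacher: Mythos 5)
Your Part (1) has a genuine gap at its central step: the claimed identification $\sO_Z(-K_{q^{-1}\sF})\cong(\det\overline{q^*\sF})^{**}$ via $dq$. Injectivity of $dq$ plus saturatedness of $q^{-1}\sF$ in $T_Z$ and of $\overline{q^*\sF}$ in $q^*T_X$ does \emph{not} make $dq\colon q^{-1}\sF\to\overline{q^*\sF}$ an isomorphism in codimension one: the image of a saturated subsheaf under $dq$ need not be saturated, since $dq$ itself degenerates along $\Exc q$. In fact the identification is false in general. Take $X=\mathbb A^2$, $\sF=\sO_X\partial_x$ the foliation by horizontal lines ($K_\sF=0$), and $Z$ the blow-up of the origin with exceptional divisor $E$; then $q^*\sF=\overline{q^*\sF}$ has trivial determinant, while $q^{-1}\sF$ is generated in the chart $y=tx$ by $x\partial_x-t\partial_t$, so $\sO_Z(-K_{q^{-1}\sF})\simeq\sO_Z(-E)$ and the would-be $B$ equals $-E$. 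Of course this $Z$ is not the graph of the fibration $(x,y)\mapsto y$ — and that is exactly the point: your argument never uses that $Z$ is the normalization of the graph of $p'$, yet the statement fails for general birational models, so no correct proof can avoid that hypothesis. What is really needed, and what the paper supplies, is the inclusion $q^*\sF\subseteq dq(q^{-1}\sF)$ (equivalently, that $r$-vectors tangent to $\sF$ lift to $Z$, so that $B$, which is the difference between the "saturation" divisor and the "degeneration of $dq$" divisor, is effective). The paper obtains this from the invariance of the graph $Z_0\subseteq Y\times X$ under $\mathrm{pr}_X^*\sF$ together with the extension theorem for Pfaff fields across normalizations \cite[Prop.~4.5]{adk08}, which produces the map $\Omega_Z^{[r]}\to q^*\sO_X(K_\sF)$ directly; the factorization through the Pfaff field of $q^{-1}\sF$ and the effectivity of $B$ then follow as you indicate.

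For Part (2) you do not give a proof: you set up a contradiction and explicitly leave the "delicate part" (the local analysis along a horizontal exceptional divisor) open. Note also that your reading of "$E\not\subseteq\supp B$" as "$q^*\sF$ is saturated at $\eta_E$" presupposes the faulty identification from Part (1). The paper proves this item by reducing to \cite[Lem.~4.19]{cd1fzerocan} and inducting on the rank of $\sF$ as in Proposition~4.17 of that reference; some such input is needed, and it is not present in your sketch.
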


\begin{proof}
Let $Z_0\subseteq Y \times X$ be the graph of $p'$, and denote by $n\colon Z \to Z_0$ the normalization map. Consider the foliation 
$$\sG:=\textup{pr}_X^*\sF \subseteq \textup{pr}_X^*T_X \subseteq \textup{pr}_Y^*T_Y\oplus \textup{pr}_X^*T_X.$$Let $\Omega_X^{r}\to\sO_X(K_\sF)$ be the map induced by the Pfaff field 
$\Omega_X^{[r]}\to\sO_X(K_\sF)$. 
By construction, $Z_0$ is invariant under $\sG$, and hence, there is a factorization:

\begin{center}
\begin{tikzcd}
{\Omega_{Y\times X}^{r}}|_{Z_0} \ar[r]\ar[d, two heads] & \textup{pr}_X^*\Omega^r_X|_{Z_0} \ar[r] & (\textup{pr}_X^*{\sO_X(K_\sF)})|_{Z_0} \ar[d, equal]\\
\Omega_{Z_0}^{r} \ar[rr]  && \sO_{Y\times X}(K_\sG)|_{Z_0}.
\end{tikzcd}
\end{center}
Notice that the foliation induced by $\sG$ on $Z$ is $q^{-1}\sF$.
By \cite[Prop. 4.5]{adk08}, the map $\Omega_{Z_0}^{r} \to (\textup{pr}_X^*{\sO_X(K_\sF)})|_{Z_0}$ extends to a map
$$\Omega_{Z}^{r} \to n^*((\textup{pr}_X^*{\sO_X(K_\sF)})|_{Z_0})\simeq q^*\sO_X(K_\sF),$$ which gives a morphism
$$\Omega_{Z}^{[r]} \to q^*\sO_X(K_\sF).$$
This map factors through the Pfaff field $$\nu_Z\colon\Omega_{Z}^{[r]} \to \sO_Z(K_{q^{-1}\sF})$$ associated to $q^{-1}\sF$
away from the closed set where $\nu_Z$ is not surjective, which has codimension at least $2$ in $Z$. Hence, there exists an 
effective Weil divisor $B$ on $Z$ such that $$K_{q^{-1}\sF}+B\sim_{\mathbb{Z}}q^*K_\sF.$$ 
Moreover, the morphism $\Omega_{Z}^{[r]} \to q^*\sO_X(K_\sF)$ identifies with the composed map
$$\Omega_{Z}^{[r]} \to \sO_Z(K_{q^{-1}\sF})\to q^*\sO_X(K_\sF)$$
since $q^*\sO_X(K_\sF)$ is torsion-free.
Note that $B$ is obviously $q$-exceptional, proving the first item.

\medskip

The second item follows from \cite[Lem. 4.19]{cd1fzerocan} by induction on the rank of $\sF$ as in the proof of Proposition 4.17 in \textit{loc. cit.} Notice that the assumption that the birational morphism is projective in the statement of Lemma 4.19 in \textit{loc. cit.} is not necessary.
\end{proof}

\begin{cor}\label{cor:weakly_regular_universal_family_leaves}
Setting and notation as in \ref{setup:fol}. Suppose that $X$ has klt singularities.
Suppose in addition that $K_\sF$ is Cartier and that $\sF$ is weakly regular.
\begin{enumerate}
\item\label{cor:weakly_regular_universal_family_leaves1} Then the foliation $q^{-1}\sF$ is weakly regular and $K_{q^{-1}\sF}\sim_\mathbb{Z}q^*K_\sF$.
\item\label{cor:weakly_regular_universal_family_leaves2} Moreover, if $E$ is a prime $q$-exceptional divisor on $Z$, then $p(E) \subsetneq Y$.
\end{enumerate}
\end{cor}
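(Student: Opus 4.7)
Both assertions will be derived from the vanishing of the effective $q$-exceptional Weil divisor $B$ supplied by Proposition~\ref{prop:family_leaves_canonical_bundle_formula}, which by construction satisfies $K_{q^{-1}\sF}+B\sim_{\Z}q^*K_\sF$. Indeed, once $B=0$ is established, the $\Z$-linear equivalence in (1) is immediate, and assertion (2) follows from Proposition~\ref{prop:family_leaves_canonical_bundle_formula}(2): any $q$-exceptional prime divisor $E$ with $p(E)=Y$ would be forced to lie in $\supp B=\emptyset$, which is impossible.

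The key step toward $B=0$ is to upgrade the weak regularity of $\sF$ to the stronger property that $\sF$ has canonical singularities; this is exactly the content of Lemma~\ref{lemma:regular_versus_canonical}, which applies thanks to the standing hypotheses that $K_\sF$ is Cartier and $X$ is klt. Canonical singularities give the discrepancy inequality $a(E,X,\sF)\geq 0$ for every prime divisor $E\subset Z$ exceptional over $X$, while rewriting the relation as $K_{q^{-1}\sF}-q^*K_\sF=-B$ with $B$ effective and $q$-exceptional yields the opposite inequality $a(E,X,\sF)=-\ord_E B\leq 0$ for every $q$-exceptional prime $E$. Combining the two forces $\ord_E B=0$ for each such $E$, and since $\supp B$ is contained in the $q$-exceptional locus, we conclude $B=0$ and hence $K_{q^{-1}\sF}\sim_{\Z}q^*K_\sF$.

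It remains to verify the weak regularity of $q^{-1}\sF$. With $B=0$ in hand, the inclusion $\sO_Z(K_{q^{-1}\sF})\hookrightarrow q^*\sO_X(K_\sF)$ from Proposition~\ref{prop:family_leaves_canonical_bundle_formula} becomes an isomorphism, so the Pfaff field of $q^{-1}\sF$ is identified with the morphism $\Omega_Z^{[r]}\to q^*\sO_X(K_\sF)$ constructed there. To check that this morphism is surjective, one would factor the pullback $q^*\Omega_X^{[r]}\to q^*\sO_X(K_\sF)$ of the Pfaff field of $\sF$ through the natural cotangent morphism $q^*\Omega_X^{[r]}\to\Omega_Z^{[r]}$. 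The pullback is surjective since $\sF$ is weakly regular, and hence the second factor $\Omega_Z^{[r]}\to q^*\sO_X(K_\sF)$ must itself be surjective, which yields the weak regularity of $q^{-1}\sF$.

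The conceptually decisive ingredient in this plan is the passage from weak regularity to canonical singularities via Lemma~\ref{lemma:regular_versus_canonical}; once this is available, everything else is rather formal. The subtlest point will be to ensure that the surjectivity of the Pfaff field on $Z$ holds at every point (including along $\Sing Z$ and on $q$-exceptional divisors), and not merely on a dense open subset. This is expected to reduce, via the construction of Proposition~\ref{prop:family_leaves_canonical_bundle_formula} through the graph $Z_0\subseteq Y\times X$, to the weak regularity of $\sF$ on $X$, which holds by assumption.
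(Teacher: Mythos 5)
Your proposal is correct, and its decisive half --- the surjectivity of $\Omega_Z^{[r]}\to q^*\sO_X(K_\sF)$ obtained by factoring the (surjective, because $\sF$ is weakly regular) pulled-back Pfaff field $q^*\Omega_X^{[r]}\twoheadrightarrow q^*\sO_X(K_\sF)$ through the reflexive pull-back morphism $q^*\Omega_X^{[r]}\to\Omega_Z^{[r]}$ --- is exactly the paper's argument; the worry you raise at the end about surjectivity along $\Sing Z$ and the exceptional divisors is already resolved by that very diagram, since surjectivity of the composite forces surjectivity of the second factor everywhere. Where you diverge is in how you get $B=0$: you first pass through Lemma~\ref{lemma:regular_versus_canonical} to give $\sF$ canonical singularities and then play the inequality $a(E,X,\sF)\ge 0$ against $a(E,X,\sF)=-\mathrm{ord}_E B\le 0$. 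This works, but note two things. First, it needs the (standard, but unstated) conversion of the linear equivalence $K_{q^{-1}\sF}+B\sim_{\Z}q^*K_\sF$ into the equality of divisors defining the discrepancies, which uses the negativity lemma after extending $q$ to a projective birational morphism as in Lemma~\ref{lemma:singularities_birational_morphism}. Second, it is redundant in your own proof: once you have the surjectivity of $\Omega_Z^{[r]}\to q^*\sO_X(K_\sF)$, the factorization $\Omega_Z^{[r]}\to\sO_Z(K_{q^{-1}\sF})\hookrightarrow q^*\sO_X(K_\sF)$ from Proposition~\ref{prop:family_leaves_canonical_bundle_formula} forces the second (injective) arrow to be onto, hence an isomorphism of reflexive sheaves, which gives $B=0$ and the surjectivity of the Pfaff field of $q^{-1}\sF$ simultaneously --- this is how the paper proceeds, and it avoids invoking canonical singularities altogether at this stage. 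Your deduction of item~(2) from $B=0$ via Proposition~\ref{prop:family_leaves_canonical_bundle_formula}(2) matches the paper.
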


\begin{proof}
By item~\ref{prop:family_leaves_canonical_bundle_formula1} in Proposition~\ref{prop:family_leaves_canonical_bundle_formula}, 
the Pfaff field $$\Omega_X^{[r]}\to\sO_X(K_\sF)$$ associated to $\sF$ induces a map
$$\Omega_{Z}^{[r]} \to q^*\sO_X(K_\sF)$$ which factors through the Pfaff field $\Omega_Z^{[r]} \to \sO_Z(K_{q^{-1}\sF})$ associated to $q^{-1}\sF$. On the other hand, by \cite[Thm. 1.3]{kebekus_pull_back}, there exists a morphism of sheaves
$$q^*\Omega_X^{[r]} \to \Omega_Z^{[r]}$$ that agrees with the usual pull-back morphism of K\"ahler differentials wherever this makes sense. One then readily checks that we obtain a commutative diagram as follows:

\begin{center}
\begin{tikzcd}
q^*\Omega_X^{[r]} \ar[r, two heads]\ar[d] &  q^*\sO_X(K_\sF) \ar[d, equal]\\
\Omega_{Z}^{[r]} \ar[r]  & q^*\sO_X(K_\sF).
\end{tikzcd}
\end{center}
This implies that the map $\Omega_{Z}^{[r]} \to q^*\sO_X(K_\sF)$ is surjective. Consequently, this map identifies with Pfaff field
associated to $q^{-1}\sF$, proving item 1.

Finally, item 2 is an immediate consequence of item 1 together with item~\ref{prop:family_leaves_canonical_bundle_formula2} in Proposition~\ref{prop:family_leaves_canonical_bundle_formula}.
\end{proof}

As we will see, Theorem \ref{thm:regular_foliation_morphism} is an easy consequence of Lemma \ref{lemma:irreducible_fibers} and 
Lemma \ref{lemma:small} below.

\begin{lem}\label{lemma:irreducible_fibers}
Setting and notation as in \ref{setup:fol}. Suppose that $X$ has klt singularities and that $\sF$ is weakly regular.
Then there exists an open subset $Y^\circ$ with complement of codimension at least $2$ in $Y$ such that, for any $y\in Y^\circ$,
either $p^{-1}(y)$ is empty or any connected component of $p^{-1}(y)$ is irreducible.
\end{lem}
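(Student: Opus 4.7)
The plan is to reduce to the case where $K_\sF$ is Cartier so that Corollary~\ref{cor:weakly_regular_universal_family_leaves} applies to $q\colon Z\to X$, then to perform a Stein factorisation of $p$, and finally to rule out codimension-one reducible fibres by an infinitesimal argument at the intersection of components.

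First, I would pass to an index-one cover $f\colon X_{1}\to X$ along $K_\sF$. Its ramification is supported on the non-Cartier locus of $K_\sF$, which is $\sF$-invariant, so Lemma~\ref{lemma:regular_quasi_etale} guarantees that $f^{-1}\sF$ remains weakly regular, while $X_{1}$ stays klt since $f$ is quasi-\'etale in codimension one. Because the conclusion concerns the complement of a codimension-two subset and finite morphisms preserve codimensions, it is enough to treat the case in which $K_\sF$ is Cartier.

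Next, I would form the Stein factorisation $p=g\circ\pi\colon Z\to Y'\to Y$: as $g$ is finite, a connected component of $p^{-1}(y)$ is exactly a fibre $\pi^{-1}(y')$ for some $y'\in g^{-1}(y)$, and $g$ preserves codimensions, so the problem reduces to showing that the locus $T\subseteq Y'$ over which $\pi$ has a reducible fibre has codimension at least $2$ in $Y'$. Applying Corollary~\ref{cor:weakly_regular_universal_family_leaves}, the foliation $q^{-1}\sF$ on $Z$ is weakly regular with $K_{q^{-1}\sF}\sim_{\Z}q^{\ast}K_\sF$ and canonical singularities (Lemma~\ref{lemma:regular_versus_canonical}), and moreover every $q$-exceptional prime divisor on $Z$ is sent by $p$ to a \emph{proper} subvariety of $Y$. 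Removing those images, together with the non-flat locus of $\pi$, from $Y'$, we may assume $\pi$ is equidimensional of relative dimension $r$ over an open set whose complement has codimension at least $2$.

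The main obstacle is to exclude a divisorial component $T_{0}\subseteq T$. Over a general $y'\in T_{0}$, a reducible fibre $\pi^{-1}(y')=C_{1}\cup\cdots\cup C_{s}$ with $s\geq 2$ and each $C_{i}$ of dimension $r$ would provide a point $z\in C_{1}\cap C_{2}$ at which the leaves of $q^{-1}\sF$ through generic points of $C_{1}$ and $C_{2}$ force the foliation tangent space at $z$ to contain $T_{C_{1},z}+T_{C_{2},z}$, contradicting the rank-$r$ bound. The delicate point is converting this na\"ive tangent count into a rigorous contradiction with weak regularity, which is a sheaf-level surjectivity statement rather than a pointwise one; I expect to carry this out by passing to a log resolution on which the reducible fibres become simple normal crossings, using the equality $K_{q^{-1}\sF}\sim q^{\ast}K_\sF$ together with Lemma~\ref{lemma:singularities_birational_morphism} to propagate the weak regularity upwards, and then extracting the contradiction in the region where the foliation is a genuine subbundle of rank $r$.
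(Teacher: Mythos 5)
Your overall shape — reduce to $K_\sF$ Cartier, localize the problem to a codimension-two locus, and derive a contradiction from two fiber components meeting at a point — matches the paper's, but the step you yourself flag as delicate is precisely where the proposal breaks down, and the fix is not the one you propose. At a point $z\in C_1\cap C_2$ the foliation $q^{-1}\sF$ cannot be a subbundle (a regular foliation has a unique leaf through each point), so "extracting the contradiction in the region where the foliation is a genuine subbundle of rank $r$" excludes exactly the locus you need. Passing to a log resolution does not help: either the resolution separates the strict transforms of $C_1$ and $C_2$ (no contradiction left to extract), or they still meet inside the non-subbundle locus. Moreover, Lemma~\ref{lemma:singularities_birational_morphism} propagates \emph{canonical singularities} under birational maps satisfying the crepancy condition $K_{q^{-1}\sF}\sim_\Q q^*K_\sF$; it does not propagate weak regularity up an arbitrary log resolution, and the crepancy condition fails for a general resolution. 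Weak regularity on a singular space is a sheaf-level surjectivity and by itself does not prevent two leaf closures from intersecting.

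The paper's mechanism is different and requires two inputs you omit. First, one must show that $Z$ itself is klt (in fact has quotient singularities) in codimension $2$: this is done by a covering construction — a further finite Galois base change $Y_2\to Y$ making the fibers reduced over codimension-one points (\cite[Lem.~4.2]{druel15}), then \cite[Lem.~5.4]{druel15} to get canonical singularities of $Z_2$ over a big open set, descending to a klt pair $(Z,\Delta)$ away from codimension $3$. Second, at a general point $z$ of the codimension-two intersection locus one invokes \cite[Prop.~9.3]{GKKP} to write an analytic neighborhood as $\C^{\dim Z}/G$ with $G$ containing no quasi-reflections; the quotient map is then étale in codimension one, so Lemma~\ref{lemma:regular_quasi_etale} upgrades weak regularity to a genuinely \emph{regular} foliation on the smooth cover $W$. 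There the preimages of $F_1$ and $F_2$ are disjoint unions of leaves, distinct leaves of a regular foliation on a smooth space are disjoint, and a common leaf forces $F_1=F_2$. Without the quotient-singularity input and the passage to the smooth local cover, your tangent-space count has no rigorous foundation. (A smaller issue: the global index-one cover "along $K_\sF$" you invoke is not available in general since $K_\sF$ need not be $\Q$-Cartier on all of $X$; the paper localizes around the generic point of the bad locus before taking the cyclic cover.)
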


\begin{proof} We argue by contradiction and assume that there exists a prime divisor $D \subset Y$ such that, for a general point $y\in D$,
$p^{-1}(y)$ is non-empty and some connected component of $p^{-1}(y)$ is reducible. Let $S \subseteq p^{-1}(D)$ be a subvariety of codimension $2$ in $Z$ such that for a general point $z \in S$ there is at least two irreducible components of $p^{-1}(p(z))$ passing through 
$z$.

\medskip

\textbf{Step 1. Construction.} 

\medskip

Shrinking $Y$ if necessary, we may assume without loss of generality that $p$ is equidimensional.
Replacing $X$ by an open neighborhood of the generic point of $q(S)$, we may also assume that there exists a positive integer $m$ such that 
$$\sO_X(m{K_\sF})\simeq \sO_X.$$
Let $f \colon X_1 \to X$ be the associated cyclic cover, which is quasi-\'etale (see \cite[Def. 2.52]{KM}), and let $Z_1$ be the normalization of the product $Z \times_{X} X_1$. The induced morphism 
$g\colon Z_1 \to Z$ is then a finite cover.

By \cite[Lem. 4.2]{druel15}, there exists a finite cover
$Y_2 \to Y$ with $Y_2$ normal and connected such that the following holds. If 
$Z_2$ denotes the normalization of the product $Y_2 \times_Y Z_1$, then the natural morphism $p_2\colon Z_2 \to Y_2$ has reduced fibers over codimension $1$ points in $Y_2$. We may also assume that $Y_2 \to Y$ is a Galois cover.
We obtain a commutative diagram as follows:

\begin{center}
\begin{tikzcd}[row sep=large, column sep=large]
Z_2  \ar[dd, "{p_2}"']\ar[r, "{g_1}"]  & Z_1 \ar[dd, bend right, "{p_1}"']\ar[d, "{g}"]\ar[r, "{q_1}"]  & X_1\ar[d, "f"]\\
  & Z \ar[d, "{p}"]\ar[r, "{q}"]  & X\\
Y_2 \ar[r]& Y. &
\end{tikzcd}
\end{center}
Notice that $g \circ g_1\colon Z_2 \to Z$ is a finite Galois cover.

\medskip

\textbf{Step 2. Away from a closed subset of codimension at least $3$, $Z$ has quotient singularities and the foliation induced by $p$ on $Z$ is weakly regular.}

\medskip

Notice that $X_1$ has klt singularities by \cite[Prop. 3.16]{Kollar97}, and that the foliation $\sF_{X_1}:=f^{-1}\sF$ is weakly regular by Lemma \ref{lemma:regular_quasi_etale}. 
Observe now that the foliation 
$\sF_{Z_1}:=q_1^{-1}\sF_{X_1}$ is given by $p_1$ and that $Z_1$ identifies with the normalization of the graph of the rational map $p_1\circ q_1^{-1}$. Therefore, 
$\sF_{Z_1}$ is weakly regular and 
$$K_{\sF_{Z_1}}\sim_\mathbb{Z}q_1^*K_{\sF_{X_1}}$$
by item~\ref{cor:weakly_regular_universal_family_leaves1} in Corollary~\ref{cor:weakly_regular_universal_family_leaves}. 
On the other hand, $\sF_{X_1}$ has canonical singularities (see Lemma \ref{lemma:regular_versus_canonical}). 
Applying Lemma \ref{lemma:singularities_birational_morphism}, we conclude that 
$\sF_{Z_1}$ has canonical singularities as well. This in turn implies that the foliation $\sF_{Z_2}:=g_2^{-1}\sF_{Z_1}$ has also canonical singularities (see Lemma \ref{lemma:canonical_quasi_etale_cover}).
From \cite[Lem. 5.4]{druel15}, we conclude that $Z_2$ has canonical singularities over a big open set contained in $Y_2$, using the fact that $p_2$ has reduced fibers over codimension $1$ points by construction. In particular, $Z_2$ has canonical singularities in codimension $2$.

Since $g \circ g_1\colon Z_2 \to Z$ is a finite Galois cover, there exists an effective $\mathbb{Q}$-divisor 
$\Delta$ on $Z$ such that $$K_{Z_2}\sim_\mathbb{Q}(g \circ g_1)^*(K_Z+\Delta).$$ Moreover, away from a closed subset of codimension at least $3$,
$K_Z+\Delta$ is $\mathbb{Q}$-Cartier by \cite[Lemma 2.6]{cd1fzerocan}), and the pair $(Z,\Delta)$ is klt by \cite[Prop. 3.16]{Kollar97}.

\medskip

By construction, any irreducible codimension $1$ component of the ramification locus of $g$ is $q_1$-exceptional, and hence
invariant under $\sF_{Z_1}$ by item~\ref{cor:weakly_regular_universal_family_leaves2} in Corollary~\ref{cor:weakly_regular_universal_family_leaves}.
It follows from Lemma \ref{lemma:regular_quasi_etale} that $\sF_Z:=q^{-1}\sF$ is weakly regular in codimension $2$.

\medskip

\textbf{Step 3. End of proof.}

\medskip

Let $z \in S$ be a general point.
Recall from \cite[Prop. 9.3]{GKKP} that $z$
has an analytic neighborhood $U\subseteq Z$ that is biholomorphic to an analytic neighborhood of the origin in a variety of the form 
$\mathbb{C}^{\dim Z}/G$, where G is a finite subgroup of $\textup{GL}(\dim Z,\mathbb{C})$
that does not contain any quasi-reflections. In particular, if $W$ denotes the inverse image of $U$ in the affine space 
$\mathbb{C}^{\dim Z}$,
then the quotient map $$g_U\colon W\to W/G\simeq U$$ is \'etale outside of the singular set. 

By Lemma \ref{lemma:regular_quasi_etale} again, $\sF_{Z}$ induces a regular foliation on $W$.
Let $F_1$ and $F_2$ be irreducible components of 
$p^{-1}(p(z))$
passing through $z$ with $F_1\neq F_2$. 
Note that $$g_U^{-1}(F_1\cap U)\cap g_U^{-1}(F_2\cap U)\neq\emptyset.$$
By general choice of $z$, $F_1$ and $F_2$ are not contained in the singular locus of $\sF_{Z}$,
and hence both $g_U^{-1}(F_1\cap U)$ and $g_U^{-1}(F_2\cap U)$ are a disjoint union of leaves.
But then, any leaf passing through some point of $g_U^{-1}(F_1\cap U)\cap g_U^{-1}(F_2\cap U)$
is a connected component of both $g_U^{-1}(F_1 \cap U)$ and $g_U^{-1}(F_2\cap U)$.
This in turn implies that $F_1=F_2$, yielding a contradiction. This finishes the proof of the lemma.
\end{proof}

\begin{lem}\label{lemma:small}
Setting and notation as in \ref{setup:fol}. Suppose that $X$ has klt singularities and that $\sF$ is weakly regular.
Let $E$ be a prime $q$-exceptional divisor on $Z$ such that $\dim p(E) \ge \dim Y -1$.
\begin{enumerate}
\item\label{lemma:small1}Then $\dim p(E) = \dim Y - 1$. In particular, $E$ is invariant under the foliation on $Z$ induced by $p$.
\item\label{lemma:small2}Moreover, if $z$ is a general point in $E$, then there exists a curve $T \subseteq E$ passing through $z$ with 
$\dim p(T)=1$ such that $q(E_{p(t_1)}(t_1))=q(E_{p(t_2)}(t_2))$ for general points $t_1$ and $t_2$ in $T$, where 
$E_{p(t)}(t)$ denotes the irreducible component of $E_{p(t)} \subseteq p^{-1}(p(t))$ passing through $t \in T \subset E$.
\end{enumerate}
\end{lem}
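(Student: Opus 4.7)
For item~(\ref{lemma:small1}), my strategy is to invoke Corollary~\ref{cor:weakly_regular_universal_family_leaves}(\ref{cor:weakly_regular_universal_family_leaves2}), which asserts that any $q$-exceptional prime divisor $E$ satisfies $p(E)\subsetneq Y$. That corollary additionally requires $K_\sF$ to be Cartier, so I first replace $X$ by a Zariski neighbourhood of the generic point of $q(E)$ and pass to a quasi-\'etale cyclic cover trivialising a suitable power of $K_\sF$, exactly as in Step~1 of the proof of Lemma~\ref{lemma:irreducible_fibers}. Combining $\dim p(E)<\dim Y$ with the hypothesis $\dim p(E)\ge\dim Y-1$ gives $\dim p(E)=\dim Y-1$. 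The invariance of $E$ under $q^{-1}\sF$, which is the foliation on $Z$ induced by $p$ (Corollary~\ref{cor:weakly_regular_universal_family_leaves}(\ref{cor:weakly_regular_universal_family_leaves1})), is then the standard observation that a prime divisor is invariant under a foliation coming from a morphism $p\colon Z\to Y$ if and only if its image is a proper subvariety of $Y$.

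For item~(\ref{lemma:small2}), I first shrink $Y$ so that $p$ is equidimensional and the open set $Y^\circ\subseteq Y$ of Lemma~\ref{lemma:irreducible_fibers} has complement of codimension at least two. For a general $y\in p(E)\cap Y^\circ$, the fiber $Z_y:=p^{-1}(y)$ is irreducible of dimension $r$; invariance of $E$ together with $\dim(E\cap Z_y)=\dim E-\dim p(E)=r$ forces $Z_y\subseteq E$. Because $Z$ is the normalization of the graph of $p'$, the map $(p,q)|_E\colon E\to p(E)\times q(E)$ is birational onto its image $J$, with $\dim J=\dim E=\dim Y+r-1$, whereas $\dim q(E)\le\dim Y+r-2$. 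Projecting $J$ to $q(E)$, the fiber over a general $x\in q(E)$ is identified with $A_x:=p(F_x)\subseteq p(E)$, where $F_x:=(q|_E)^{-1}(x)$; it has dimension at least one, and $p|_{F_x}$ is generically finite onto $A_x$.

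For a general $z\in E$ with $x:=q(z)$, I then pick an irreducible curve $C\subseteq A_x$ through $p(z)$ and let $T\subseteq F_x$ be an irreducible component through $z$ of $(p|_{F_x})^{-1}(C)$. This $T$ is a curve in $E$ through $z$ with $q(T)=\{x\}$ and $\dim p(T)=1$; for general $t\in T$ we have $p(t)\in Y^\circ$, so $E_{p(t)}(t)=Z_{p(t)}$. It remains to prove that the classifying map $\Psi\colon p(E)\dashrightarrow\mathrm{Chow}(X)$, $y\mapsto[q(Z_y)]$, is constant on $C$. I would argue by dimension comparison: the universal family over $\Psi(p(E))$ has dimension $\dim\Psi(p(E))+r$ and projects onto $q(E)$ of dimension at most $\dim Y+r-2$, so if $\Psi|_C$ were non-constant (hence generically finite, as $C$ is a curve) the weak regularity of $\sF$---which supplies a nowhere-vanishing Pfaff form and therefore, by Frobenius, a unique local leaf of $\sF$ through the general point $x$---would force all the irreducible $r$-dimensional cycles $q(Z_y)$, $y\in C$, tangent to $\sF$ and containing $x$, to agree with this local leaf near $x$, and hence globally. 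This contradicts generic finiteness of $\Psi|_C$.

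The main obstacle I anticipate is precisely this last rigidity step: rigorously extracting from weak regularity of $\sF$ on the klt variety $X$ a Frobenius-type uniqueness statement that applies at the general point $x$ of the exceptional locus $q(E)$, and converting it into the required coincidence of the global cycles $q(Z_y)$ in $\mathrm{Chow}(X)$.
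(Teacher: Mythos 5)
Your treatment of item~(1) is fine and is exactly the paper's argument: reduce to $K_\sF$ Cartier by a local cyclic cover, invoke Corollary~\ref{cor:weakly_regular_universal_family_leaves}~(\ref{cor:weakly_regular_universal_family_leaves2}) to get $p(E)\subsetneq Y$, and deduce invariance. The dimension bookkeeping you set up for item~(2) (finiteness of $(p,q)|_E$ onto its image, $\dim q(E)\le \dim Y+r-2$, hence $\dim p(F_x)\ge 1$, producing the curve $T$) is also sound, modulo the harmless point that Lemma~\ref{lemma:irreducible_fibers} only makes \emph{connected components} of fibers irreducible.

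The gap is the rigidity step, which you correctly flag as the main obstacle but then resolve by an appeal to Frobenius that is not available. The point $x=q(z)$ is a general point of $B:=q(E)$, which has codimension at least $2$ in $X$ and may be entirely contained in $X\setminus X_{\rm reg}$; indeed its codimension may be $3$ or more, so even the quotient-singularity structure from \cite[Prop.~9.3]{GKKP} that rescues the analogous step in Lemma~\ref{lemma:irreducible_fibers} (where the relevant locus $S$ has codimension exactly $2$) is out of reach. Weak regularity only asserts surjectivity of the reflexive Pfaff field $(\Omega_X^r\otimes\sO_X(-K_\sF))^{**}\to\sO_X$; at a singular point of $X$ this gives no integrability statement, no local product structure, and no well-defined ``unique local leaf of $\sF$ through $x$.'' Compounding this, the cycles $q(Z_y)$ for $y\in C\subseteq p(E)$ are degenerate limits of leaves squeezed into $B$ (they all lie in a subset of dimension $\le\dim X-2$), so even granting a regular point they are not leaves of $\sF$ through $x$ in any sense that lets you identify them with a single local plaque. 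This is precisely where the paper does something nontrivial: Claim~\ref{claim:projectable} shows, via Kebekus's reflexive pull-back/restriction morphisms and a compatibility of Pfaff fields, that the foliation induced on $E$ is \emph{projectable} under $q|_E$ to a weakly regular rank-$r$ foliation $\sF_B$ on $B$ itself (after shrinking so that $B$ is smooth as an abstract variety). Uniqueness of leaves is then applied to the regular foliation $\sF_B$ on the smooth variety $B$ -- not to $\sF$ at the singular point $x$ of $X$ -- and the images $q(E_{p(t)}(t))$ are recognized as closures of leaves of $\sF_B$, which forces them to coincide. Without an argument of this kind your proof of item~(2) does not close.
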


\begin{proof}For the reader's convenience, the proof is subdivided into a number of steps.

\medskip

\textbf{Step 1. Reduction to the case where $K_\sF$ is Cartier and proof of item 1.}

\medskip

Replacing $X$ by an open neighborhood of the generic point of $q(E)$, we may assume without loss of generality that there exists a positive integer $m$ such that 
$$\sO_X(m{K_\sF})\simeq \sO_X.$$
Let $f \colon X_1 \to X$ be the associated cyclic cover, which is quasi-\'etale (see \cite[Def. 2.52]{KM}), and let $Z_1$ be the normalization of the product $Z \times_{X} X_1$. The induced morphism 
$g\colon Z_1 \to Z$ is then a finite cover. We obtain a commutative diagram as follows:
\begin{center}
\begin{tikzcd}[row sep=large, column sep=large]
Z_1 \ar[dd, bend right, "{p_1}"']\ar[d, "{g}"]\ar[r, "{q_1}"]  & X_1\ar[d, "f"]\\
Z \ar[d, "{p}"]\ar[r, "{q}"]  & X\\
Y. &
\end{tikzcd}
\end{center}
Notice that $X_1$ has klt singularities by \cite[Prop. 3.16]{Kollar97}, and that the foliation $\sF_{X_1}:=f^{-1}\sF$ is weakly regular by Lemma \ref{lemma:regular_quasi_etale}. 
Observe now that the foliation 
$\sF_{Z_1}:=q_1^{-1}\sF_{X_1}$ is given by $p_1$ and that $Z_1$ identifies with the normalization of the graph of the rational map $p_1\circ q_1^{-1}$. By item~\ref{cor:weakly_regular_universal_family_leaves1} in Corollary \ref{cor:weakly_regular_universal_family_leaves}, $\sF_{Z_1}$ is weakly regular. 
Let $E_1$ be a prime divisor on $Z_1$ such that $g(E_1)=E$. Notice that 
$E_1$ is $q_1$-exceptional and that $\dim p(E) = \dim p_1(E_1)$.
Thus, replacing $X$ by $X_1$, we may assume without loss of generality that 
$$K_\sF \sim_\mathbb{Z} 0.$$

\medskip

Then, by item~\ref{cor:weakly_regular_universal_family_leaves2} in Corollary~\ref{cor:weakly_regular_universal_family_leaves}, we must have $p(E)\subsetneq Y$. 
It follows that $p(E)$ is a prime divisor on $Y$ since $\dim p(E) \ge \dim Y -1$ by assumption. 
In particular, $E$ is invariant under the foliation $\sF_Z:=q^{-1}\sF$.

\medskip

\textbf{Step 2. The foliation induced by $\sF$ on $q(E)$.}

\medskip

Set $B:=q(E)$, and let $E^\circ \subseteq E \cap Z_\textup{reg}$ be a non-empty open set. We obtain a commutative diagram as follows: 

\begin{center}
\begin{tikzcd}[row sep=large, column sep=large]
E^\circ \ar[d, hookrightarrow]\ar[r, "{a}"] \ar[dd, bend right, "{j}"'] & B \ar[d, equal]\\
E\ar[d, hookrightarrow]\ar[r, two heads] & B \ar[d, hookrightarrow, "i"]\\
Z \ar[d, "{p}"']\ar[r, "{q}"]  & X\\
Y. &
\end{tikzcd}
\end{center}
Shrinking $X$, if necessary, we may assume without loss of generality that $B$ is smooth.
By \cite[Thm. 1.3 and Prop. 6.1]{kebekus_pull_back}, there is a factorization:

\begin{center}
\begin{tikzcd}[column sep=large]
{\Omega_X^r}|_{B} \ar[r]\ar[rr, bend left, two heads, "{di}"] & {\Omega_X^{[r]}}|_{B} \ar[r, "{d_{\textup{refl}}i}"'] & \Omega_B^r.
\end{tikzcd}
\end{center}
This implies that the map ${\Omega_X^{[r]}}|_{B} \to \Omega_B^r$ is surjective.

\begin{claim}\label{claim:projectable}
The foliation $\sF_{E^\circ}$ on $E^\circ$ induced by $\sF_Z$ is projectable under $a$. 
\end{claim}

\begin{proof}
Let $$\nu_X\colon \Omega_X^{[r]} \twoheadrightarrow \sO_X(K_{\sF}) \quad \textup{and} \quad \nu_Z\colon \Omega_Z^{[r]} \to \sO_Z(K_{\sF_Z})$$ be the Pfaff fields associated to $\sF$ and $\sF_Z$ respectively. Since $E^\circ$ is invariant by $\sF_Z$, there is a factorization:

\begin{center}
\begin{tikzcd}[column sep=large]
{\Omega_Z^{r}}|_{E^\circ} \ar[r]\ar[d, two heads] & {\Omega_Z^{[r]}}|_{E^\circ} \ar[d, "{d_{\textup{refl}}j}"'] \ar[r, "{{\nu_Z}|_{E^\circ}}"] & \sO_Z(K_{\sF_Z})|_{E^\circ} \ar[d, equal]\\
\Omega_{E^\circ}^{r} \ar[r, equal]  & \Omega_{E^\circ}^{r} \ar[r] & \sO_Z(K_{\sF_Z})|_{E^\circ}.
\end{tikzcd}
\end{center}
Recall from the proof of Corollary \ref{cor:weakly_regular_universal_family_leaves1} that there is a commutative diagram:
\begin{center}
\begin{tikzcd}[column sep=large]
q^*\Omega_X^{[r]} \ar[r, two heads, "{q^*\nu_X}"]\ar[d, "{d_{\textup{refl}}q}"'] &  q^*\sO_X(K_\sF) \\
\Omega_{Z}^{[r]} \ar[r, two heads, "{\nu_Z}"]  & \sO_Z(K_{\sF_Z}) \ar[u, "{\sim}" labl].
\end{tikzcd}
\end{center}
Finally, by \cite[Prop. 6.1]{kebekus_pull_back}, the diagram 
\begin{center}
\begin{tikzcd}[column sep=huge]
{(q^*\Omega_X^{[r]})}|_{E^\circ}\simeq a^*({\Omega_X^{[r]}}|_{B}) \ar[r, "{a^*d_{\textup{refl}}i}"]\ar[d, "{d_{\textup{refl}}q|_{E^\circ}}"'] &  a^*\Omega_B^r \ar[d] \\
{\Omega_{Z}^{[r]}}|_{E^\circ} \ar[r, "{d_{\textup{refl}}j}"']  & \Omega_{E^\circ}^r
\end{tikzcd}
\end{center}
is commutative as well. Therefore, we have a commutative diagramm as follows:
\begin{center}
\begin{tikzcd}[column sep=huge]
{(q^*\Omega_X^{[r]})}|_{E^\circ}\simeq a^*({\Omega_X^{[r]}}|_{B}) \ar[r, two heads, "{a^*d_{\textup{refl}}i}"]\ar[dd, two heads, "(q^*\nu_X)|_{E^\circ}"'] &  a^*\Omega_B^r \ar[d] \\
  & \Omega_{E^\circ}^r \ar[d]\\
(q^*\sO_X(K_\sF))|_{E^\circ} & \sO_Z(K_{\sF_Z})|_{E^\circ}\ar[l, "{\sim}"]
\end{tikzcd}
\end{center}
This in turn implies that there is a factorization:
\begin{center}
\begin{tikzcd}[column sep=large]
{\Omega_X^{[r]}}|_{B} \ar[d, two heads, "{{\nu_X}|_{B}}"]\ar[r, two heads, "{d_{\textup{refl}}i}"] & \Omega_B^r \ar[d]\\
{\sO_X(K_\sF)}|_{B}\ar[r, equal] & {\sO_X(K_\sF)}|_{B}
\end{tikzcd}
\end{center}
whose pull-back to $E^\circ$ gives the diagram above. 
It follows that the map $$\Omega_B^r \twoheadrightarrow  {\sO_X(K_\sF)}|_{B}$$ is the Pfaff field associated to a weakly regular foliation $\sF_B$ of rank $r$ on $B$ such that $da({\sF_{E^\circ}})=\sF_B$. This completes the proof of the claim.
\end{proof}

Then item 2 is an immediate consequence of Claim \ref{claim:projectable} above.
\end{proof}

We are now ready to prove Theorem \ref{thm:regular_foliation_morphism}.

\begin{proof}[Proof of Theorem \ref{thm:regular_foliation_morphism}]
Let $p\colon Z \to Y$ be the family of leaves, and let $q\colon Z \to X$ be the natural morphism.
Since $p$ has connected fibers by construction, Lemma \ref{lemma:irreducible_fibers} applied to $p\circ q^{-1}$ implies that $p$ has irreducible fibers over a big open set contained in $Y$. Hence, to prove Theorem \ref{thm:regular_foliation_morphism}, it suffices to show 
that $\textup{Exc}\,q$ is empty.
 
We argue by contradiction and assume that $\textup{Exc}\,q\neq \emptyset$.
Let $E$ be an irreducible component of $\textup{Exc}\,q$. Then
$E$ has codimension $1$ since $X$ is $\mathbb{Q}$-factorial by assumption. 
Recall from Lemma \ref{lemma:irreducible_fibers} that $p^{-1}(y)$ is irreducible for a general point $y$ in $p(E)$. 
Therefore, by Lemma \ref{lemma:small}, we must have $E=p^{-1}(p(E))$.
Moreover, if $y$ is a general point in $p(E)$, then there exists a curve $T \subseteq p(E)$ passing through $y$ such that $q(p^{-1}(t_1))=q(p^{-1}(t_2))$ for general points $t_1$ and $t_2$ in $T$. Now, there exists a positive integer $t$ such that the cycle theoretic fiber $p^{[-1]}(y)$ is $t[p^{-1}(y)]$ for a general point $y$ in 
$p(E)$. It follows that the restriction of the map $Y \to \textup{Chow}(X)$ to 
$p(E)$ has positive dimensional fibers, yielding a contradiction.
This finishes the proof of the theorem.
\end{proof}

\begin{rem}\label{rem:small}
In the setup of Theorem \ref{thm:regular_foliation_morphism}, let $p\colon Z \to Y$ be the family of leaves,  and let $q\colon Z\to X$
be the natural morphism. If $X$ is only assumed to have klt singularities, then the same argument used in the proof of the theorem shows that $q$ is a small birational map. We have $$K_{Z/Y}-R(p) \sim_\mathbb{Q}q^*K_\sF,$$ where $R(p)$ denotes the ramification divisor of $p$. In particular, if $F$ denotes the normalization of the closure of a general leaf of $\sF$, then $${K_\sF}|_{F}\sim_\mathbb{Q}K_F.$$
\end{rem}

\subsection{A splitting theorem} The following is the main result of this section.

\begin{thm}\label{thm:splitting}
Let $X$ be a normal projective variety,
and let $$T_X = \bigoplus_{i\in I} \sF_i$$ be a decomposition of $T_X$ into 
involutive subsheaves with algebraic leaves. Suppose that
there exists a $\mathbb{Q}$-divisor $\Delta$ such that $(X,\Delta)$ is klt. Then there exists 
a quasi-\'etale cover $f\colon Y \to X$ as well as a decomposition 
$$Y \simeq \prod_{i\in I} Y_i$$ of $Y$ into a product of normal projective varieties
such that the decomposition $T_X = \bigoplus_{i\in I} \sF_i$ lifts to the canonical
decomposition $$T_{\prod_{i\in I} Y_i}= \bigoplus_{i \in I} \textup{pr}_i^*T_{Y_i}.$$
\end{thm}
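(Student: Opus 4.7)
The plan is to argue by induction on $|I|$, reducing to the case $|I|=2$: once we establish a splitting $Y\simeq Y_1\times Y_2'$ separating off the $\sF_1$-factor, we can apply the result to $Y_2'$, whose tangent sheaf inherits the direct-sum decomposition $\bigoplus_{i\ge 2}\sF_i$ from that of $T_X$ (pullback along a quasi-\'etale cover preserves direct summands, algebraic integrability, and via Lemma~\ref{lemma:regular_quasi_etale} weak regularity). So suppose $T_X=\sF_1\oplus\sF_2$. By Lemma~\ref{lemma:direct_summand_regular} each $\sF_i$ is weakly regular, and by hypothesis algebraically integrable. After replacing $X$ by a small $\mathbb{Q}$-factorialization (which is an isomorphism in codimension $1$), Theorem~\ref{thm:regular_foliation_morphism} provides surjective equidimensional morphisms $p_i\colon X\to Y_i$ onto normal projective varieties, realizing $\sF_i$ as the foliation tangent to the fibers of $p_i$, with irreducible fibers over big open subsets $Y_i^\circ\subseteq Y_i$.

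Next, form the product $p=(p_1,p_2)\colon X\to Y_1\times Y_2$. At any point where both $p_i$ are smooth, the differential $dp=(dp_1,dp_2)$ has kernel $\sF_1\cap\sF_2=0$, and the dimensions balance: $\dim Y_1+\dim Y_2=\operatorname{rk}\sF_2+\operatorname{rk}\sF_1=\dim X$. Hence $p$ is generically \'etale; combined with the equidimensionality of the $p_i$ and the properness of $X$, the map $p$ is finite and surjective.

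The crucial observation is that the direct-summand decomposition turns each $\sF_j$ into a flat Ehresmann connection for $p_i$ (for $j\ne i$): on the smooth locus the restriction $dp_1$ gives an isomorphism $\sF_2\xrightarrow{\ \sim\ }p_1^*T_{Y_1}$, and the involutivity of $\sF_2$ makes this horizontal distribution integrable. Its integral leaves are precisely the algebraic leaves of $\sF_2$, which therefore dominate $Y_1$ via $p_1|_{\mathrm{leaf}}$ as finite \'etale multisections (and symmetrically with indices swapped). This analytic local triviality of $p_1$ and $p_2$ forces $p$ to be \'etale in codimension $1$; combined with Zariski--Nagata purity on the klt variety $X$, it is a quasi-\'etale finite cover.

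To conclude, pull $p$ back along a Galois quasi-\'etale cover $Y_1'\times Y_2'\to Y_1\times Y_2$ chosen to dominate the monodromy covers attached to the horizontal foliations above (equivalently, the Galois closures of the multisections $\mathrm{leaf}\to Y_i$), and normalize. The isotriviality ensures that the resulting quasi-\'etale cover $Y\to X$ is isomorphic to $Y_1'\times Y_2'$ and that the tangent decomposition of $X$ lifts to the canonical one $T_{Y_1'\times Y_2'}=\mathrm{pr}_1^*T_{Y_1'}\oplus\mathrm{pr}_2^*T_{Y_2'}$; descending via Lemma~\ref{lemma:regular_quasi_etale} and iterating induction finishes the proof. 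The main obstacle lies in the last two steps: one must extend the infinitesimal/analytic product structure across the codimension-$\ge 2$ loci where fibers of $p_i$ can be reducible or badly singular, and keep track of the branch loci under the chain of finite covers. This is exactly where the klt hypothesis on $(X,\Delta)$ and the singularity-control supplied by Lemmas~\ref{lemma:regular_versus_canonical}, \ref{lemma:canonical_quasi_etale_cover}, and \ref{lemma:singularities_birational_morphism} become essential.
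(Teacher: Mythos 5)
Your outline shares the paper's skeleton (reduce to $|I|=2$, pass to a $\mathbb{Q}$-factorialization, apply Theorem~\ref{thm:regular_foliation_morphism} to get the equidimensional maps $p_i$, use the complementary summand as an integrable Ehresmann connection to get analytic local triviality, then split after a finite cover), but two of your key assertions are exactly the points where the real work lies, and as stated they are gaps. First, the claim that $p=(p_1,p_2)\colon X\to Y_1\times Y_2$ is \emph{finite}: equidimensionality of each $p_i$ separately does not prevent $p_1^{-1}(y_1)\cap p_2^{-1}(y_2)$ from having positive-dimensional components over special points. In the paper, even after the covering construction the analogous map $q_1\times q_2\colon Y\to R_1\times R_2$ is only shown to be a \emph{small birational} morphism (an isomorphism over $R_1^\circ\times R_2^\circ$, using $G_1\cdot G_2=1$), and upgrading this to an honest product decomposition requires Lemma~\ref{lemma:splitting} (Koll\'ar--Larsen), whose hypothesis of rational singularities is where klt enters; the same lemma is also needed to descend the conclusion from the $\mathbb{Q}$-factorialization back to $X$. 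You never invoke it, and without it your final identification $Y\simeq Y_1'\times Y_2'$ does not follow.

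Second, and more seriously, you misplace the main difficulty: you say the problem is extending the product structure "across the codimension-$\ge 2$ loci where fibers of $p_i$ can be reducible", but the genuine obstruction is the presence of \emph{multiple fibers of $p_i$ in codimension one}. Over such a divisor $P\subset Y_i$ with $p_i^*P=mQ$, the horizontal foliation does not give a local product, the leaf-to-base maps are branched (so your "finite \'etale multisections" are not \'etale there), and the Galois closure $Y_i'\to Y_i$ you propose is branched in codimension one, so $Y_1'\times Y_2'\to Y_1\times Y_2$ is not quasi-\'etale. One can still hope that the induced cover of $X$ is quasi-\'etale, but verifying this is precisely the content of the paper's Step~2: the local normal form $(x_1,\dots,x_n)\mapsto(x_1^m,x_2,\dots,x_s)$ at a general point of $Q$, which uses the splitting $T_X=\sF_1\oplus\sF_2$ and regularity of both foliations there, shows that the normalized base change $N_i\to X$ is \'etale over the generic point of $Q$. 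This computation (together with the reduced-fibers cover of \cite[Lem.~4.2]{druel15} and the resulting birational multisection $G_i\to R_i$ forcing $G_1\cdot G_2=1$) cannot be replaced by an appeal to purity, since purity only tells you that a branch locus, if nonempty, is divisorial --- it does not show the branch divisor is empty. So the proposal needs both the explicit quasi-\'etaleness computation along multiple fibers and the Koll\'ar--Larsen splitting lemma to be completed.
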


\begin{proof}To prove the theorem, it is obviously enough to consider the case where $I=\{1,2\}$. Set $\tau(i)=3-i$ for each $i\in \{1,2\}$.

\medskip

\textbf{Step 1. Reduction to the case where $X$ is $\mathbb{Q}$-factorial with klt singularities.}

\medskip

Let $\pi\colon Z \to X$ be a $\mathbb{Q}$-factorialization, whose existence is established in \cite[Cor. 1.37]{kollar_kovacs_singularities}. Recall that $\pi$ is a small birational projective morphism and that $Z$ is $\mathbb{Q}$-factorial with klt singularities. Then we have the decomposition $$T_Z = \pi^{-1}\sF_1\oplus \pi^{-1}\sF_2$$
into involutive subsheaves with algebraic leaves. 

Suppose that there exist normal projective varieties $W_1$ and $W_2$ and a quasi-\'etale cover
$$g \colon W_1 \times W_2 \to Z$$ such that the decomposition $T_Z = \pi^{-1}\sF_1\oplus \pi^{-1}\sF_2$ lifts to the canonical
decomposition $$T_{W_1\times W_2}= \textup{pr}_1^*T_{W_1}\oplus \textup{pr}_2^*T_{W_2}.$$ The Stein factorization $$f \colon Y \to X$$ of $\pi\circ g$ is then a quasi-\'etale cover, and the natural map
$$W_1\times W_2 \to Y$$ is a small birational morphism. Moreover, by \cite[Prop. 3.16]{Kollar97}, $Y$ has klt singularities. In particular, it has rational singularities. Lemma \ref{lemma:splitting} below applied to $Y \map W_1\times W_2$
then implies that $X$ satisfies the conclusion of Theorem \ref{thm:splitting}.

\medskip

Therefore, replacing $X$ by $Z$, if necessary, we may assume without loss of generality that $X$ is $\mathbb{Q}$-factorial with klt singularities.

\medskip

\textbf{Step 2. Covering construction.}

\medskip

By Lemma \ref{lemma:direct_summand_regular}, $\sF_i$ is a weakly regular foliation. Therefore, by Theorem \ref{thm:regular_foliation_morphism}, $\sF_i$ is induced by a surjective equidimensional morphism $p_i\colon X \to T_i$ onto a normal projective variety $T_i$. Moreover, $p_i$ has irreducible fibers over a big open set contained in $T_i$. Let $F_i$ be a general fiber of $p_{\tau(i)}$.

Let $M_i$ denote the normalization of the product $F_i \times_{T_i} X$, and let $M_i \to N_i \to X$ denote the Stein factorization of the natural morphism $M_i \to X$. We will show that $N_i \to X$ is a quasi-\'etale cover.
Notice that for any prime $P$ on $T_i$, $p_i^*P$ is well-defined (see \cite[Sec. 2.7]{cd1fzerocan}) and has irreducible support. 

Write $p_i^*P=m Q$ for some prime divisor $Q$ on $X$ and some integer $m \ge 1$.
Set $n:=\dim X$, and $s:=\dim T_i$. By general choice of $F_i$, we may assume that $F_i \setminus X_{\rm reg}$ has codimension at least $2$ in $F_i$. In particular, $F_i\cap Q \cap X_{\rm reg}\neq\emptyset$. Let $x \in F_i\cap Q \cap X_{\rm reg}$ be a general point.
Since $\sF_1$ and $\sF_2$ are regular foliations at $x$ and $T_X=\sF_1\oplus\sF_2$, there exist local analytic coordinates centered at $x$ and $p_i(x)$ 
respectively such that $p_i$ is given by $$(x_1,x_2,\ldots,x_n)\mapsto (x_1^m,x_2\ldots,x_s),$$ and such that 
$F_i$ is given by the equations $$x_{s+1}=\cdots=x_n=0.$$
A straightforward local computation then shows that $N_i \to X$ is a quasi-\'etale cover over the generic point of 
$p_i^{-1}(P)$. This immediately implies that $N_i \to X$ is a quasi-\'etale cover.

Let $Y$ be the normalization of $X$ in the compositum of the function fields $\mathbb{C}(N_i)$, and 
let $f\colon Y \to X$ be the natural morphism. Set $\sG_i:=f^{-1}\sF_i$. By construction, $f$ is a quasi-\'etale cover, and 
$\sG_i$ is induced by a surjective equidimensional morphism $q_i\colon Y \to R_i$ with reduced fibers over a big open set contained in $R_i$. Moreover, there exists a subvariety $G_i \subseteq f^{-1}(F_i)$
such that the restriction $G_i \to R_i$ of $q_i$ to $G_i$ is 
a birational morphism.  

\medskip

\textbf{Step 3. End of proof.}

\medskip

Let $R_i^\circ$ denote the smooth locus of $R_i$, and set $Y_i^\circ:=q_i^{-1}(R_i^\circ)$.
Let $Z_i^\circ \subseteq Y_i^\circ$ be the open set where 
${q_i}|_{Y_i^\circ}$ is smooth. Notice that $Z_i^\circ$ has complement of codimension at least $2$
in $Y_i^\circ$ since $q_i$ has reduced fibers over a big open set contained in $R_i$. 

The restriction of the tangent map 
$$T{q_i}|_{Y_i^\circ}\colon T_{Y_i^\circ}\to \big({q_i}|_{Y_i^\circ}\big)^*T_{R_i^\circ}$$
to ${\sG_{\tau(i)}}|_{Z_i^\circ} \subseteq T_{Z_i^\circ}$ 
then induces an isomorphism ${\sG_{\tau(i)}}|_{Z_i^\circ}\simeq \big({q_i}|_{Z_i^\circ}\big)^*T_{R_i^\circ}$. Since 
${\sG_{\tau(i)}}|_{Y_i^\circ}$ and $\big({q_i}|_{Y_i^\circ}\big)^*T_{R_i^\circ}$
are both reflexive sheaves, we finally obtain an isomorphism
$${\sG_{\tau(i)}}|_{Y_i^\circ}\simeq \big({q_i}|_{Y_i^\circ}\big)^*T_{R_i^\circ}.$$
A classical result of complex analysis says that complex flows of vector fields on analytic spaces exist (see \cite{kaup}).  It follows that
${q_i}|_{Y_i^\circ}$ is a locally trivial analytic fibration for the analytic topology. 

The morphism $q_1\times q_2\colon Y \to R_1 \times R_2$ then induces an isomorphism
$$q_1^{-1}(R_1^\circ)\cap q_2^{-1}(R_2^\circ) \simeq R_1^\circ \times R_2^\circ$$ 
since $G_1 \cdot G_2 =1$ and $q_i$ is locally trivial over $R_i^\circ$.
In particular, $q_1\times q_2$ is a small birational morphism. By \cite[Prop. 3.16]{Kollar97} again, $Y$ has klt singularities. Hence, it has rational singularities. Lemma \ref{lemma:splitting} below applied to $q_1\times q_1$ then
implies that $X$ satisfies the conclusion of Theorem \ref{thm:splitting}, completing 
the proof of the theorem.
\end{proof}

\begin{lem}[{\cite[Prop. 18]{kollar_larsen}}]\label{lemma:splitting}
Let $X$, $Y_1$ and $Y_2$ be normal projective varieties, and let $\pi\colon X \map Y_1\times Y_2$ be a birational map that 
does not contract any divisor. Suppose in addition that $X$ has rational singularities. Then 
$X$ decomposes as a product $X\simeq X_1 \times X_2$ and there exist birational maps $\pi_i\colon X_i \to Y_i$ such that 
$\pi = \pi_1 \times \pi_2$.
\end{lem}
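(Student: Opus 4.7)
The plan is first to extend the birational map $\pi$ to a birational morphism $\phi\colon X \to Y_1 \times Y_2$ by going through the two projections $p_i\colon Y_1 \times Y_2 \to Y_i$, and then to use the product structure of the target together with the rational singularities of $X$ and the no-contraction hypothesis to exhibit $X$ itself as a product. Set $\phi_i := p_i \circ \pi\colon X \dashrightarrow Y_i$. To extend each $\phi_i$ to a morphism, take a resolution of indeterminacy $\sigma\colon \widetilde X \to X$ such that $\widetilde\pi := \pi \circ \sigma$ is everywhere defined on $\widetilde X$, and set $\widetilde\phi_i := p_i \circ \widetilde\pi$. Since $X$ is normal, one has $\sigma_* \sO_{\widetilde X} = \sO_X$, so $\sigma$ is proper with connected fibers; a general fiber is a single point on which $\widetilde\phi_i$ is trivially constant, so the rigidity lemma for proper morphisms produces a morphism $\phi_i\colon X \to Y_i$ with $\widetilde\phi_i = \phi_i \circ \sigma$. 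Note that this step uses only normality of $X$, not the rational singularities hypothesis.

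\textbf{Combining and Stein factorizing.} The morphism $\phi := (\phi_1, \phi_2)\colon X \to Y_1 \times Y_2$ agrees with $\pi$ where the latter is defined, hence is birational. If $D \subset X$ is a prime divisor, then $\phi(D) = \pi(D)$ is a divisor by hypothesis, so $\phi$ contracts no divisor; in other words, $\phi$ is a small birational morphism. Now Stein-factorize each $\phi_i$ as $X \xrightarrow{q_i} X_i \xrightarrow{f_i} Y_i$, with $X_i$ normal projective, $q_i$ surjective with connected fibers, and $f_i$ finite. Setting $q := (q_1, q_2)\colon X \to X_1 \times X_2$, we get a factorization $\phi = (f_1 \times f_2) \circ q$ through a finite morphism $f_1 \times f_2$. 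Consequently $q$ is birational, and since neither $\phi$ nor $f_1 \times f_2$ contracts a divisor, the same holds for $q$; thus $q$ is a small birational morphism between normal projective varieties, and the $f_i$ are candidates for the birational maps $\pi_i$ appearing in the conclusion.

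\textbf{Main obstacle and finish.} The principal difficulty is showing that $q$ is in fact an isomorphism, and it is here that the rational singularities of $X$ enter essentially. My plan is as follows: using the Leray spectral sequence and the Stein-factorization identities $(q_i)_* \sO_X = \sO_{X_i}$, together with the vanishing $R^j \tau_* \sO_{\widetilde X} = 0$ for $j > 0$ (for any resolution $\tau\colon \widetilde X \to X$, which holds since $X$ has rational singularities), one deduces that the higher direct images $R^j (q_i)_* \sO_X$ vanish, and hence that $X_i$ and therefore $X_1 \times X_2$ inherit rational singularities. Combining this with the fact that the birational inverse $\pi^{-1}$ is small as well (the hypothesis being symmetric, as used in the applications in the body of the paper), one can run the argument of the first paragraph with the roles of $X$ and $Y_1 \times Y_2$ interchanged to produce a morphism $X_1 \times X_2 \to X$ inverse to $q$; alternatively, one uses that a small birational morphism from a variety with rational singularities onto a normal variety with rational singularities and equal canonical class extends to an isomorphism via Hartogs-type extension of $q^{-1}$ along the exceptional locus of codimension $\ge 2$. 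Once $q$ is an isomorphism, $X \simeq X_1 \times X_2$ and the finite birational morphisms $\pi_i := f_i\colon X_i \to Y_i$ furnish the required decomposition $\pi = \pi_1 \times \pi_2$.
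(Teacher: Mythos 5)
The paper does not prove this lemma: it is quoted verbatim from Koll\'ar--Larsen (Prop.~18), so your argument has to stand entirely on its own. It does not, for two independent reasons.

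First, the extension step is invalid and its conclusion is false. The rigidity lemma descends $\widetilde\phi_i$ along $\sigma$ only if $\widetilde\phi_i$ is constant on \emph{every} fiber of $\sigma$; constancy on a \emph{general} fiber, which is a single point, carries no information about the positive-dimensional fibers over the indeterminacy locus, and if your reasoning were valid it would show that every birational map out of a normal projective variety is a morphism (e.g.\ $[x:y:z]\mapsto([x:z],[y:z])$ from $\mathbb{P}^2$ to $\mathbb{P}^1\times\mathbb{P}^1$ would be everywhere defined). Worse, under the actual hypotheses of the lemma the conclusion of Step~1 fails: take $X=X_1\times\mathbb{P}^1$ with $X_1\dashrightarrow Y_1$ a flop of projective threefolds with terminal (hence rational) singularities and $\pi=(\text{flop})\times\mathrm{id}$; this contracts no divisor but is not a morphism. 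Any correct proof must therefore work on the normalized graph of $\pi$, or with the strict transforms of the slices $Y_1\times\{y_2\}$ and $\{y_1\}\times Y_2$, rather than with a global morphism $X\to Y_1\times Y_2$.

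Second, even if one grants a small birational morphism $q\colon X\to X_1\times X_2$, proving that $q$ is an isomorphism is precisely the nontrivial content of the proposition, and neither of your proposed justifications works. A small birational morphism between normal projective varieties with rational singularities and matching canonical classes need not be an isomorphism (any small resolution of an ordinary double point of a threefold), and a ``Hartogs-type extension of $q^{-1}$'' is unavailable because rational maps into projective, non-affine targets do not extend across codimension-two subsets. The symmetry route also fails: the hypothesis ``$\pi$ contracts no divisor'' is not symmetric in $\pi$ and $\pi^{-1}$ (the inverse of a blow-down contracts no divisor while the blow-down itself does), and the lemma does not assume $\pi^{-1}$ is small. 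Finally, the claim that $R^j(q_i)_*\sO_X=0$ for $j>0$ is wrong on its face, since $q_i$ has positive-dimensional fibers birational to $X_{3-i}$. The essential interaction between the product structure of the target, the no-contraction hypothesis, and the rational singularities of $X$ --- which is what the Koll\'ar--Larsen argument actually exploits --- never enters your proof.
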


\section{Proof of Theorem \ref{thm:main_thm}}

The present section is devoted to the proof of Theorem \ref{thm:main_thm}.

\begin{proof}[Proof of Theorem \ref{thm:main_thm}]

We have seen in Theorem~\ref{polystable} that the tangent sheaf of $X$ is polystable. By definition it means that we have a decomposition
$$T_X= \bigoplus_{i\in I} \sF_i$$
where the $\sF_i$ are stable with respect to $c_1(X)$ and have the same slope. Moreover, each subsheaf $\sF_i$ defines on $X_{\rm reg}$ a parallel subbundle of $T_{X_{\rm reg}}$ with respect to the Kähler-Einstein metric ${\omke}|_{X_{\rm reg}}$. 
This immediately implies that ${\sF_i}|_{X_{\rm reg}}$ is involutive.

\begin{claim}
Each foliation $\sF_i$ has algebraic leaves. 
\end{claim}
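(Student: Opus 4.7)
The strategy is to reduce the claim to the Bogomolov--McQuillan theorem on algebraic integrability of foliations with positive minimal slope; the work required is mostly conceptual bookkeeping, since the hard input is already packaged in Theorem~\ref{polystable} and in the cited theorem of \cite{bogomolov_mcquillan01}.

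First, I would check that each $\sF_i$ actually defines a foliation on all of $X$, not merely on $X_{\mathrm{reg}}$. Since $T_X=\bigoplus_j \sF_j$ presents $\sF_i$ as a direct summand with reflexive complement $\bigoplus_{j\neq i}\sF_j$, the quotient $T_X/\sF_i$ is reflexive, hence torsion-free, and therefore $\sF_i$ is saturated (and itself reflexive) in $T_X$. Involutivity on $X_{\mathrm{reg}}$ is automatic: parallel subbundles of a Kähler manifold are involutive by the standard symmetry argument on the second fundamental form. Because $\sF_i$ is reflexive and $X\setminus X_{\mathrm{reg}}$ has codimension at least two, the Lie bracket viewed as an $\sO_X$-linear map $\sF_i\otimes \sF_i\to T_X/\sF_i$ on $X_{\mathrm{reg}}$ extends by reflexivity of $T_X/\sF_i$ to zero on the whole of $X$. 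Thus $\sF_i$ is a bona fide foliation on $X$ in the sense of Section~4.1.

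Next I would verify positivity of slopes. As $X$ is $\mathbb{Q}$-Fano,
\[
\mu_{c_1(X)}(T_X) \;=\; \frac{c_1(X)^n}{\dim X} \;>\; 0,
\]
and the polystability statement in Theorem~\ref{polystable} forces each summand to have the same slope as $T_X$; in particular $\mu_{c_1(X)}(\sF_i)>0$. Stability of $\sF_i$ further gives $\mu_{\min, c_1(X)}(\sF_i)=\mu_{c_1(X)}(\sF_i)>0$.

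The final step is to invoke the main result of \cite{bogomolov_mcquillan01} (in the form available for foliations on normal projective varieties): a foliation whose minimal slope with respect to some ample polarization is strictly positive has algebraic (and in fact rationally connected) leaves. Applied to each $\sF_i$ with the $\mathbb{Q}$-ample class $c_1(X)$, this yields the desired algebraic integrability. The only genuinely delicate point I anticipate is the first step, namely the global extension of involutivity from $X_{\mathrm{reg}}$ to $X$, but this is handled cleanly by reflexivity of $\sF_i$ together with the codimension estimate; the real mathematical content is entirely carried by Bogomolov--McQuillan.
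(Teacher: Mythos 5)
Your proposal is essentially correct and lands on the same deep input as the paper, but it black-boxes the one step the paper actually carries out. The paper's proof takes a general complete intersection curve $C \subset X_{\mathrm{reg}}$ of elements of $|-mK_X|$ along which $\sF_i$ is locally free, invokes Flenner's restriction theorem to get semistability of $\sF_i|_C$, deduces ampleness of $\sF_i|_C$ from positivity of its degree, and then applies Bogomolov--McQuillan in its original form (Fact 2.1.1 of \cite{bogomolov_mcquillan01}), whose hypothesis is precisely ampleness of the restriction to such a curve --- not positivity of $\mu_{\min}$. The packaged statement you cite, ``$\mu_{\min}>0$ with respect to an ample polarization implies algebraic leaves,'' is really the Campana--P\u{a}un theorem \cite[Thm.~1.1]{CP19}; the paper mentions it as an alternative but is careful to apply it to the induced foliation $\wh{\sF_i}$ on the resolution $\X$, since that statement is formulated on smooth varieties. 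So either you should supply the Mehta--Ramanathan/Flenner restriction step to reach the hypotheses of \cite[Fact~2.1.1]{bogomolov_mcquillan01} on $X$ itself, or pass to $\X$ and check that $\wh{\sF_i}$ still has positive minimal slope with respect to $\pi^*c_1(X)$. Your preliminary discussion (saturation and reflexivity of $\sF_i$, extension of involutivity across the codimension-two set $X\setminus X_{\mathrm{reg}}$, and $\mu_{\min}(\sF_i)=\mu(\sF_i)=\mu(T_X)>0$ from polystability) is correct and consistent with what the paper takes for granted; it is the conversion of slope positivity into the actual hypothesis of the integrability theorem that needs to be made explicit.
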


\begin{proof}
Let $m$ be a positive integer such that $-mK_X$ is very ample, and let $C \subset X$ be a general complete intersection curve of elements 
in $|-mK_X|$. By general choice of $C$, we may assume that $C \subset X_{\rm reg}$ and that $\sF_i$ is locally free in a neighborhood of $C$.
If $m$ is large enough, then the vector bundle ${\sF_i}|_{C}$ is semistable by \cite[Thm.1.2]{Flenner84}). We conclude that it is 
ample since it has positive slope. Then \cite[Fact 2.1.1]{bogomolov_mcquillan01} says that $\sF_i$ has algebraic leaves.
Alternatively, one can apply \cite[Thm. 1.1]{CP19} to the foliation $\wh{\sF_i}$ on $\X$ induced by $\sF_i$.
\end{proof}

Let $f:Y\to X$ be the quasi-étale cover and $Y=\prod_{i \in I} Y_i$ be the splitting that are both provided by Theorem~\ref{thm:splitting}. The decomposition 
\begin{equation}
\label{TY}
T_Y=\bigoplus_{i\in I} \mathrm{pr}_i^* T_{Y_i}
\end{equation} 
is a decomposition of $T_Y$  into summands of maximal slope. If there exists $i\in I$ such that  $T_{Y_i}$ is not stable with respect to $c_1(Y_i)$, then it means that the polystable decomposition of $T_Y$ provided by Theorem~\ref{polystable} via $f^*\omke$ refines strictly the decomposition \eqref{TY}. By applying Theorem~\ref{thm:splitting} again, we can find another quasi-étale cover $Y'\to Y$ which splits according to the polystable decomposition of $T_Y$ and one can then compare again the polystable decomposition of $T_{Y'}$ to the one coming from $T_Y$. After finitely many such steps, one can find a quasi-étale cover $g:Z\to X$ such that
\begin{enumerate}
\item[$(i)$] There exists a splitting $Z=\prod_{k\in K} Z_k$ into a product of $\mathbb Q$-Fano varieties.
\item[$(ii)$] For any $k\in K$, the tangent sheaf $T_{Z_k}$ is stable with respect to $c_1(Z_k)$. 
\item[$(iii)$] The variety $Z$ admits a Kähler-Einstein metric given by $g^*\omke$. 
\end{enumerate}

\noindent
Theorem~\ref{thm:main_thm} is a consequence of the Claim below. 

\begin{claim}
\label{produit}
There exist a Kähler-Einstein metric $\om_k$ on each variety $Z_k$ such that $g^*\om=\sum_{k\in K} \mathrm{pr}_k^*\om_k$. 
\end{claim}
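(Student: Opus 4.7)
The plan is to exploit the product structure $Z=\prod_{k\in K} Z_k$ together with the parallel orthogonal tangent decomposition supplied by Theorem~\ref{polystable} in order to split $g^*\om$ as a sum of pulled-back Kähler-Einstein metrics on the factors. Since a product of normal varieties is regular precisely at points where each factor is, we have $Z_{\rm reg}=\prod_k Z_{k,\rm reg}$. By Remark~\ref{criterion}, $g^*\om$ is a Kähler-Einstein metric on $Z$, hence smooth on $Z_{\rm reg}$ and satisfying $\Ric(g^*\om)=g^*\om$ there. Applying Theorem~\ref{polystable}(ii) to $(Z,g^*\om)$, the decomposition $T_Z=\bigoplus_k \mathrm{pr}_k^* T_{Z_k}$ agrees with the polystable decomposition and is therefore parallel and pairwise orthogonal with respect to $g^*\om$ on $Z_{\rm reg}$.

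Since this parallel orthogonal holomorphic decomposition coincides with the one induced by the ambient product structure, a standard local de~Rham-type argument in coordinates adapted simultaneously to both sides forces $g^*\om|_{Z_{\rm reg}}$ to be a product metric: there exist smooth Kähler metrics $\om_k$ on $Z_{k,\rm reg}$ such that
\[(g^*\om)|_{Z_{\rm reg}}=\sum_{k\in K}\mathrm{pr}_k^*\om_k.\]
Because the Ricci form of a Kähler product metric decomposes as the sum of the pullbacks of the Ricci forms of the factors, the identity $\Ric(g^*\om)=g^*\om$ on $Z_{\rm reg}$ gives $\Ric\om_k=\om_k$ on $Z_{k,\rm reg}$ for every $k\in K$.

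It remains to check that each $\om_k$ extends to a Kähler-Einstein metric on $Z_k$ in the sense of Definition~\ref{defKE}. For this, pick smooth Kähler representatives $\omega_{Z_k}\in c_1(Z_k)$ and set $\omega_Z:=\sum_k \mathrm{pr}_k^*\omega_{Z_k}\in c_1(Z)$. Write $g^*\om=\omega_Z+dd^c\Phi$ with $\Phi\in L^\infty(Z)$. Fixing base points $z^0_{k'}\in Z_{k',\rm reg}$ for $k'\ne k$ and restricting the identity $dd^c\Phi=\sum_{k'}\mathrm{pr}_{k'}^*(\om_{k'}-\omega_{Z_{k'}})$ to the slice $Z_{k,\rm reg}\times\{(z^0_{k'})_{k'\ne k}\}\subset Z_{\rm reg}$ yields $\om_k=\omega_{Z_k}+dd^c\phi_k$, where $\phi_k(z_k):=\Phi(z_k,(z^0_{k'})_{k'\ne k})$ is bounded on $Z_{k,\rm reg}$. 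Thus $\om_k$ extends to a closed positive current in $c_1(Z_k)$ with bounded potentials, and since it already satisfies $\Ric\om_k=\om_k$ on $Z_{k,\rm reg}$, Remark~\ref{criterion} shows that it is a Kähler-Einstein metric on $Z_k$.

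The main obstacle is the middle step: deducing that $g^*\om$ itself splits as a product metric from the compatibility of the parallel orthogonal decomposition with the ambient product structure. This should reduce to a local calculation in product coordinates where the parallelism of the summands forces the Christoffel symbols mixing the two factors to vanish, and hence forces each diagonal block of the metric to depend only on the coordinates of the corresponding factor; alternatively, one may argue that each slice $\{z^0\}\times Z_{k,\rm reg}$ is totally geodesic with the induced metric independent of $z^0$ by parallel transport along the complementary directions.
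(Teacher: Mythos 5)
Your argument is correct and the first half (parallelism of $\mathrm{pr}_k^*T_{Z_k}$ via Theorem~\ref{polystable}, hence splitting of $g^*\om$ as $\sum_k\mathrm{pr}_k^*\om_k$ with $\Ric\om_k=\om_k$ on $Z_{k,\rm reg}$) is exactly the paper's; both treat the ``parallel $\Rightarrow$ product metric'' step at the same informal level, so the obstacle you flag is not held against you. Where you genuinely diverge is in verifying that each $\om_k$ is a Kähler--Einstein metric in the sense of Definition~\ref{defKE}: you slice the bounded global potential of $g^*\om$ along $Z_{k,\rm reg}\times\{z^0\}$ to produce a bounded potential for $\om_k$ in $c_1(Z_k)$, whereas the paper invokes the inequality $\int_{Z_k^{\rm reg}}\om_k^{n_k}\le c_1(Z_k)^{n_k}$ of \cite[Prop.~3.8]{BBEGZ} for each factor and forces equality from the multiplicativity $c_1(Z)^n=\prod_k c_1(Z_k)^{n_k}$ together with the full mass of $g^*\om$. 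Your route is more hands-on and arguably more elementary, but note two small points: what you actually end up verifying is the definition itself (closed positive current in $c_1(Z_k)$ with bounded potentials, smooth and Einstein on $Z_{k,\rm reg}$), not the criterion of Remark~\ref{criterion}, which is stated in terms of the mass condition $\int_{Z_k^{\rm reg}}\om_k^{n_k}=c_1(Z_k)^{n_k}$; if you insist on citing the remark you should add that bounded potentials imply full non-pluripolar mass (Bedford--Taylor). You also implicitly use $Z_{\rm reg}=\prod_k Z_{k,\rm reg}$ to place the slice inside the regular locus and that the extension of the bounded $\omega_{Z_k}$-psh function $\phi_k$ across $Z_{k,\rm sg}$ exists; both are standard and worth a word. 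The paper's mass-counting argument has the advantage of bypassing any discussion of potentials on the factors.
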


\begin{proof}[Proof of Claim~\ref{produit}]
We set $n_k:=\dim Z_k$. As the subsheaf $\sF_k:=\mathrm{pr}_k^*T_{Z_k} \subset T_Z$ has maximal slope with respect to $c_1(Z)$, it follows from Theorem~\ref{polystable} that $\sF_k|_{Z_{\rm reg}}$ is parallel with respect to $g^*\omke$. This enables us to define a smooth Kähler metric $\om_k$ on $Z_k^{\rm reg}$ such that $g^*\omke = \sum_{k\in K} \mathrm{pr_k}^*\om_k$ on $Z_{\rm reg}$. Clearly, one has $\Ric \om_k= \om_k$ on $Z_k^{\rm reg}$. In order to check that $\om_k$ defines a Kähler-Einstein metric on $Z_k$ in the sense of Definition~\ref{defKE}, it is sufficient to check that $\int_{Z_k^{\rm reg}} \om_k^{n_k}=c_1(Z_k)^{n_k}$ by Remark~\ref{criterion}. By \cite[Proposition~3.8]{BBEGZ} we always have the inequality $\int_{Z_k^{\rm reg}} \om_k^{n_k}\le c_1(Z_k)^{n_k}$ and therefore
\begin{align*}
c_1(Z)^n&=\int_{Z_{\rm reg}} g^*\omke^n \\
&= \prod_{k\in K} \int_{Z_k^{\rm reg}} \om_k^{n_k} \\
& \le \prod_{k\in K}  c_1(Z_k)^{n_k}.
\end{align*}
Since $c_1(Z)^n= \prod_{k\in K}  c_1(Z_k)^{n_k}$, one must have $\int_{Z_k^{\rm reg}} \om_k^{n_k}=c_1(Z_k)^{n_k}$ for all $k\in K$. 
\end{proof}

\noindent
Theorem~\ref{thm:main_thm} is now proved. 
\end{proof}

\bibliographystyle{skalpha}
\bibliography{biblio}

\end{document}